\theoremstyle{plain}
\newtheorem{theorem}{Theorem}[section]
\newtheorem{proposition}[theorem]{Proposition}
\newtheorem{lemma}[theorem]{Lemma}
\newtheorem{conjecture}[theorem]{Conjecture}
\newtheorem{corollary}[theorem]{Corollary}
\theoremstyle{definition}
\newtheorem{remark}[theorem]{Remark}
\newtheorem{example}[theorem]{Example}
\numberwithin{equation}{section}
\newcommand{\ZZ}{\mathbb{Z}}
\newcommand{\FF}{\mathbb{F}}
\newcommand{\ts}{\chi}
\def\FF{\mathbb{F}}
\begin{document}

%\title{Correlations of arithmetic functions in short intervals in function fields}
\title[Correlations in function fields]{Correlations of sums of two squares and other arithmetic functions in function fields}

\author{Lior Bary-Soroker}
\address{Raymond and Beverly Sackler School of Mathematical Sciences, Tel Aviv University, Tel Aviv 69978, Israel}
\email{barylior@post.tau.ac.il}
\author{Arno Fehm}
\address{School of Mathematics, Alan Turing Building, The University of Manchester, Oxford Road, Manchester, M13 9PL, United Kingdom}
\email{arno.fehm@manchester.ac.uk}
%\keywords{}
%\subjclass[2010]{}

\maketitle

\begin{abstract}
%The study distribution of the sums of two squares and the autocorrelations of the indicator function $b$ of sums of two squares
%has been studied by various authors including Landau, Rieger, Schwarz, Hooley and Iwaniec.
We investigate a function field analogue of a recent conjecture on autocorrelations of sums of two squares
by Freiberg, Kurlberg and Rosenzweig, which generalizes an older conjecture by Connors and Keating.
%a special case of which goes back to Iwaniec.
In particular, we provide extensive numerical evidence and prove it in the large finite field limit.
Our method can also handle correlations of other arithmetic functions and we 
give applications to (function field analogues of) the average of sums of two squares on shifted primes,
and to autocorrelations of higher divisor functions twisted by a quadratic character.
%This paper studies the function field version of an ancient conjecture in number theory concerning the autocorrelations of the indicator function $b(n)$ of sums of two squares.
%
%
% on the of $b$ and on the cross-correlation with the von Mangoldt function $\Lambda$ via their analogues in function fields. This direction has several advantages: It is faster to factor polynomials than integers, hence we may check the conjectures numerically in large ranges of parameters; We may prove the respective conjecture in certain limits. Hence we provide evidence to the original conjectures. 
\end{abstract}

%\setcounter{tocdepth}{2}
%\tableofcontents

\section{Introduction}

\noindent
We study function field analogues of conjectures on autocorrelations of sums of two squares.

\subsection{Correlations of arithmetic functions}
A basic statistical property of an arithmetic function $\psi$ is its \emph{mean value}; that is, the asymptotic as $x\to \infty$ of 
\begin{equation}\label{eq:MV}
\left<\psi(n)\right>_{n\leq x} = \frac{1}{x} \sum_{n\leq x} \psi(n).
\end{equation}
More information is given by the \emph{cross-correlations} of arithmetic functions $\psi_1,\ldots, \psi_k$, 
which are defined at $(h_1,\ldots, h_k)\in \ZZ^k$ as the asymptotic as $x\to \infty$ of 
\begin{equation}\label{Cor}
\left<\prod_{i=1}^{k} \psi_i(n+h_i)\right>_{n\leq x}=\frac{1}{x}\sum_{n\leq x}\psi_1(n+h_1) \cdots \psi_k(n+h_k).
\end{equation}
In the special case when all the $\psi_i$ are equal, we use the term \emph{autocorrelations}. 
Some of the most famous theorems and problems in number theory are about these statistical properties for certain specific arithmetic functions,
like the Hardy-Littlewood prime tuple conjecture, a quantitative version of the twin prime conjecture,
which can be expressed in terms of the autocorrelations of the von Mangoldt function $\Lambda$.

\subsection{Autocorrelation of sums of two squares}
An integer $n$ is  a sum of two squares if there exist  $x,y\in \ZZ$ such that $n=x^2+y^2$;
i.e., it is a norm of the Gaussian integer $x+iy\in \ZZ[i]$. We let 
\begin{equation}
b(n) = \begin{cases}
1, & \mbox{if there exist }x,y\in \mathbb{Z}\mbox{ such that } n = x^2+y^2, \\
0, & \mbox{otherwise.}
\end{cases}
\end{equation}
The study of the statistics of $b(n)$ has a long history:  
Already Landau \cite{Landau} gives the mean value of $b$ as
\begin{equation}\label{LandauThm}
\left<b(n)\right>_{n\leq x} \;=\; \frac{1}{x} \sum_{n\leq x} b(n) \;\sim\; K \cdot\frac{1}{\sqrt{\log x}}, \qquad x\to \infty,
\end{equation}
where 
%$K$ is the so-called Landau-Ramanujan constant.
\begin{equation}
K\;=\;\frac{1}{\sqrt{2}}\prod_{p\, \equiv\, 3\,(\mathrm{mod}\,4)}(1-p^{-2})^{-1/2}\;\approx\; 0.764  
\end{equation}
is the Landau-Ramanujan constant. 
The distribution of sums of two squares was then studied
intensively through the behavior of $b$ in short intervals 
in many works by various authors, including \cite{Hooley3,Iwaniec,Hooley4}, see the introduction of \cite[\S1.2]{BBF} for a brief history.

As for autocorrelations of $b$, one has lower and upper bounds of the right order of magnitude for
the pair autocorrelations:
\begin{equation}\label{eq:paircorrelationbounds}
\frac{1}{\log x} \ll \left< b(n)b(n+h)\right>_{n\leq x} \ll \frac{1}{\log x}.
\end{equation}
For $h=1$, the upper bound was proved by Rieger 
\cite{Rieger} while the lower bound by Indlekofer and Schwarz in \cite{IndlekoferSchwarz,Indlekofer,Schwarz} (see Kelly \cite{Kelly} and Bantle \cite{Bantle} for short interval versions of the lower bound in \eqref{eq:paircorrelationbounds}). Hooley \cite{Hooley3} proved \eqref{eq:paircorrelationbounds} for general $h\neq 0$. As Hooley \cite{Hooley1} asserts, determining the asymptotics of $\left<b(n)b(n+h)\right>_{n\leq x}$ brings ``\emph{much the same difficulties}'' 
as computing $\left<\Lambda(n)\Lambda(n+h)\right>_{n\leq x}$, which is a special case of the aforementioned Hardy-Littlewood conjecture.

For triple autocorrelation, as mentioned by Cochrane and Dressler \cite{CochraneDressler}, it is trivial that there are infinitely many triples $(n-1,n,n+1)$ with $b(n-1)b(n)b(n+1)=1$ (since one triple like this generates another one, namely $(n^2-1,n^2,n^2+1)$) and they give an upper bound of the expected order of magnitude
\[
\left<b(n-1)b(n)b(n+1)\right>_{n\leq x}\ll \frac{1}{(\log x)^{3/2}}.
\] 
Hooley \cite{Hooley2} finds infinitely many $n$ with $b(n)b(n+h_1)b(n+h_2)=1$ for any $h_1,h_2$.
In the study of Connors and Keating \cite{ConnorsKeating} on the two-point correlations in the quantum spectrum of the square
billiard, they give a conjectural pair autocorrelation of $b$ at $h\neq 0$, 
\begin{equation}\label{eq:sos_paircorrelation}
\left< b(n) b(n+h) \right>_{n\leq x} \sim W_h \cdot \frac{1}{\log x}, \qquad x\to \infty
\end{equation}
where $W_h$ is an explicit constant depending on $h$
(cf.~\cite{Iwaniec} for a contradicting conjecture in the case $h=1$), 
and they provide numerical data in support of their conjecture. 
%It should be noticed that $W_1= 1/2$, whence \eqref{conj:Iwaniec} is inconsistent with \eqref{eq:sos_paircorrelation}. 
Assuming \eqref{eq:sos_paircorrelation},
Smilansky \cite{Smilansky} calculates the second moment of the distribution of $b$ in short intervals of length $\lambda \sqrt{\log x}$
and shows it is consistent with a Poisson distribution.
Freiberg, Kurlberg, and Rosenzweig \cite{FKR} give heuristics for higher level autocorrelations
and conjecture that for a tuple $h=(h_1,\ldots, h_k)$ of pairwise distinct integers,
\begin{equation}\label{eq:autocorrelation}
\left< b(n+h_1)\cdots b(n+h_k) \right>_{n\leq x}  \sim \mathfrak{S}_{h} \cdot \left<b(n)\right>_{n\leq x}^{k}, \qquad x\to \infty
\end{equation}
with 
\[
\mathfrak{S}_h =\prod_{p} \frac{\delta_h(p)}{\delta_0(p)},
\]
where the product runs over all primes, and 
\[
\delta_h(p) = \lim_{\nu\to \infty} \frac{\# \{n\in \mathbb{Z}/p^{\nu}\mathbb{Z}:\forall i\exists a_i,b_i\;\mbox{s.t. }n+h_i\equiv a_i^2+b_i^2 \mod p^{\nu}\}}{p^{\nu}}.
\] 
It can happen that there is a local obstruction, leading to $\mathfrak{S}_h=0$, e.g.~when the $h_i$ cover all residue classes modulo $4$,
but \cite{FKR} show that $\mathfrak{S}_h>0$ if $k\leq 3$.
We should also note that \eqref{eq:sos_paircorrelation} is the special case $k=2$ of \eqref{eq:autocorrelation}, 
i.e.~$W_h=\mathfrak{S}_{(0,h)}$,
see \cite[Discussion~1.1]{FKR}. 
Assuming \eqref{eq:autocorrelation}, \cite[Theorem 1.4]{FKR} deduces that the distribution of the number of sums of two squares in short intervals of typical length is indeed Poisson. 
The goal of this paper is to provide evidence for \eqref{eq:autocorrelation} by studying this problem in the function field setting. 
%We translate the heuristic of Freiberg, Kurlberg, and Rosenzweig to obtain a function field analogue of \eqref{eq:autocorrelation}.
%see Conjecture~\ref{conj}.
%We give strong numerical evidence 
%which is more extensive than what can be done in number fields, as there is a fast algorithm to compute the analogue of $b$. As further evidence, 
%and we prove this conjecture in the large finite field limit. 

%Our methods in the large finite field limit may be applied to compute correlations of a wide family of arithmetic functions in short intervals in terms of group theory; see Theorem~\ref{thm:general}. 

\subsection{Correlations in the function field setting}
In this setting, we replace the ring of integers by the ring of polynomials $\FF_q[T]$ over a finite field $\FF_q$ with $q$ elements. 
The positive integers up to $x$ are modeled by the subset $M_{n,q}\subseteq \FF_q[T]$ of monic polynomials of degree $n$ and the \emph{prime polynomials} are the 
monic irreducible polynomials. 
See e.g.~\cite{Rudnick} for the classical analogue of the prime number theorem and a survey of some of the recent work on number theory in function fields.
Our arithmetic functions are complex valued functions $\psi$ on the monic polynomials $M_q=\bigcup_{n=1}^\infty M_{n,q}$.
In a general point of view, our goal is to understand the cross-correlations of arithmetic functions 
$\psi_1,\ldots, \psi_k$ on $M_q$ %in short intervals $I_q(f_0,\epsilon)$ 
at $(h_1,\dots,h_k)\in\mathbb{F}_q[T]^k$,
\begin{equation}\label{eq:corsiff}
\left<\prod_{i=1}^k\psi_i(f+h_i) \right>_{f\in M_{n,q}}
= \frac{1}{q^n} \sum_{f\in M_{n,q}} \psi_1(f+h_1) \cdots \psi_{k}(f+h_k)
\end{equation}
as the parameter $q^n = \#M_{n,q}$ is large
(and $n>{\rm deg}(h_i)$ for all $i$ to avoid technical difficulties).
%(Note that $\#I_{q}(f_0,\epsilon) = q^{\lfloor \epsilon n\rfloor +1}$.)
This parameter can be large, in particular, either when $n$ is much larger than $q$, which we call the \emph{large degree limit},
or when $q$ is much larger than $n$, which we call the \emph{large finite field limit}.

Typically, in the large degree limit, one knows no more than what is known in number fields assuming the Generalized Riemann Hypothesis\footnote{There are, however, several exceptions to this typical phenomenon; see for example, \cite{Hall,Poonen}.} (which is, of course, a theorem in function fields%due to Weil with extensions by Deligne
). 
In the large finite field limit one can often go much further than what can be done in the number field setting 
or in the large degree limit. 
An extensive study by several authors \cite{ABR,BB,Carmon,CarmonRudnick,Pollack} has led to a complete understanding of \eqref{eq:corsiff} in this limit 
for the family of arithmetic functions depending on \emph{cycle structure} (see \cite[Theorem~1.4]{ABR}).
%The present work proves a more general result
%on correlations in the large finite field limit 
%and allows us to in particular treat a certain function field analogue of sums of two squares,
%which we will discuss now.

\subsection{Sums of two squares in the function field setting}
There is a recent series of works on a certain function field analogue of sums of two squares,
which we now recall briefly:
For $f\in M_{q}$ we let
\begin{eqnarray}
\label{eqn:def_bq}
b_q(f) &=& 
\begin{cases}
1,& \mbox{if }f =A^2 + TB^2, A,B\in\mathbb{F}_q[T]\\
0,& \mbox{otherwise.}
\end{cases},
\end{eqnarray}
i.e.~we consider norms from the ring $\FF_q[\sqrt{- T}]$, which we take as the analogue of $\ZZ[i]$. 
%The analogue of Landau's theorem \eqref{LandauThm} in function fields should give the asymptotic of the mean value 
%$$
% \left<b_q(f)\right>_{f\in \M_{n,q}} := \frac{1}{\#\mathcal{M}_{n,q}}\sum_{f\in\mathcal{M}_{n,q}}b_q(f)
%$$
%as $q^n\to \infty$. 
%We note that $q^n$ has several ways to tend to infinity and the asymptotic value is different in different limits, see \cite{BSW}.
%In this work we will be interested in the range of parameters when $q$ is much larger than $n$. In this limit, 
With this definition, Smilansky, Wolf and the first named author \cite{BSW} give asymptotics for $\left<b_q(f)\right>_{f\in M_{n,q}}$ in the limits $q\to \infty$ and $n\to \infty$,
and Gorodetsky \cite{Gorodetsky} extends this to %in the former and $q^n\rightarrow\infty$ in the latter
\begin{eqnarray}
 \left<b_q(f)\right>_{f\in M_{n,q}} &\sim& K_q\cdot\frac{1}{4^n}\left(2n\atop n\right),\quad q^n\rightarrow\infty,
\end{eqnarray}
where
\begin{equation}
\label{eqn:def_Kq}
 K_q=(1-q^{-1})^{-\frac{1}{2}}\prod_{\chi_q(P)=-1}(1-|P|^{-2})^{-\frac{1}{2}}=1+O(q^{-1})
\end{equation}
is an explicit constant depending only on $q$ (see Section \ref{sec:prelim} for notation).
Moreover, Bank and the two authors \cite{BBF} determine the mean value of $b$ in short intervals in the limit $q\rightarrow\infty$.
 %$\left<b_q(f)\right>_{|f-f_0|<|f_0|^\epsilon}$ for $q\rightarrow\infty$. Here $|f|=q^{\deg f}$ if $f\neq 0$ and $|0|=0$. 
In the spirit of these works, we can formulate a function field analogue of \eqref{eq:autocorrelation}
for autocorrelations of $b_q$:
%The first contribution of the present work is a {\em conjecture} for the autocorrelations of $b_q$:
%We first phrase

\begin{conjecture}\label{conj}
Fix $N\geq 1$ and $k\geq 1$.
Then for $q$ an odd prime power, 
$n\geq N$,
and $h_1,\dots,h_k\in\mathbb{F}_q[T]$ of degree less than $N$ and pairwise distinct,
\begin{eqnarray*}
\left<\prod_{i=1}^k b_q(f+h_i)\right>_{f\in M_{n,q}} &\sim& \mathfrak{S}_{q,h}\cdot \left<b_q(f)\right>_{f\in M_{n,q}}^k \\
  &\sim& \mathfrak{S}_{q,h}\cdot K_q^k\cdot\frac{1}{4^{nk}}\left(2n\atop n\right)^k
%+ O((qn)^{-1/2})
%\quad q^n\rightarrow\infty
\end{eqnarray*}
uniformly as $q^n\rightarrow\infty$, where $K_q$ is defined as in \eqref{eqn:def_Kq} and
%where the implied constant depends only on $m$ and $k$, and
\begin{eqnarray}
\label{eqn:def_Sqh}
 \mathfrak{S}_{q,h} &=& \prod_{\substack{P\in\mathbb{F}_q[T]\\\mathrm{monic\; irred.}}}\frac{\delta_{q,h}(P)}{\delta_{q,0}(P)^k}
\end{eqnarray}
with
%$$
% \delta_{q,h}(P) = \lim_{\nu\rightarrow\infty}|P|^{-\nu}\#\left\{f\in\mathbb{F}_q[T]/(P^\nu) : 
%\begin{array}{c} \forall i\in\{1,\dots,k\}\;\exists g_i\in\mathbb{F}_q[T]\mbox{ monic such that}\\ 
% b_q(g_i)=1 \mbox{ and }f+h_i\equiv g_i\mod P^\nu
% \end{array}
%\right\}.
%$$
\[
\delta_{q,h}(P) = \lim_{\nu\rightarrow\infty}\frac{\#\{f\in\mathbb{F}_q[T]/(P^\nu) : 
\forall i\exists A_i,B_i\: f+h_i\equiv A_i^2+TB_i^2\mod P^{\nu}\}}{|P|^{\nu}}.
\]
\end{conjecture}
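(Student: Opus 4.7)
The plan is to analyse the generating function
\[
Z_h(u;q) \;=\; \sum_{n\geq 1}u^n \sum_{f\in M_{n,q}}\prod_{i=1}^k b_q(f+h_i),
\]
whose $n$-th coefficient, divided by $q^n$, is the autocorrelation we wish to compute. The conjectured asymptotic $\mathfrak{S}_{q,h}K_q^k\binom{2n}{n}^k 4^{-nk}$ corresponds to $Z_h(u;q)$ having a singularity of square-root type at the natural boundary $u=1/q$ of the form $C_h\cdot(1-qu)^{-k/2}$ with $C_h$ proportional to $\mathfrak{S}_{q,h}K_q^k$; coefficient extraction by a standard transfer theorem then produces the claimed $\binom{2n}{n}^k 4^{-nk}$ growth and so implies the conjecture uniformly in both regimes of $q^n\to\infty$.

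The starting point is that $b_q$ is multiplicative, with local factors determined by how each prime $P$ of $\mathbb{F}_q[T]$ splits in $\mathbb{F}_q[\sqrt{-T}]$, so the univariate generating function $\sum_f b_q(f)u^{\deg f}$ admits an Euler product; this is exactly what Gorodetsky \cite{Gorodetsky} exploits in the case $k=1$ to extract the singular expansion at $u=1/q$ and deduce the mean-value asymptotic. For $k\geq 2$, however, $\prod_i b_q(f+h_i)$ is not multiplicative as a function of $f$, and the Euler product for $Z_h(u;q)$ collapses. To salvage the approach I would fix a degree cutoff $d\geq N$ and separate each constraint $b_q(f+h_i)=1$ into its local component at primes of degree $\leq d$ (which, by the Chinese Remainder Theorem, contributes precisely the partial product $\prod_{\deg P\leq d}\delta_{q,h}(P)/\delta_{q,0}(P)^k$ appearing in \eqref{eqn:def_Sqh}) and a tail condition at primes of degree $>d$. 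The problem then reduces to controlling the tail correlation and showing that the partial local products converge to $\mathfrak{S}_{q,h}$ as $d\to\infty$.

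For the \emph{large finite field} regime (bounded $n$, $q\to\infty$), the tail contribution can be analysed by expanding $b_q$ using the quadratic character $\chi_q$ of $\mathbb{F}_q[\sqrt{-T}]/\mathbb{F}_q[T]$ and reducing to character sums, or equivalently to equidistribution of factorisation types of the shifted polynomials $f+h_i$. A Galois-theoretic computation of the monodromy group of the associated cover, combined with Deligne's equidistribution theorem, yields the predicted main term with power saving $O(q^{-1/2})$, in the spirit of \cite{ABR,CarmonRudnick,Carmon,Pollack}. This is the part of the argument that I would expect to carry out in detail.

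The \emph{large degree} regime (fixed $q$, $n\to\infty$) is, I believe, the main obstacle and is genuinely hard. Controlling $Z_h(u;q)$ near $u=1/q$ for $k\geq 2$ without a global Euler product requires a genuinely new analytic input: one must extract a $(1-qu)^{-k/2}$ singular expansion with the correct constant $C_h$, and the tools that work for the mean value $k=1$ (explicit factorisation as a product of zeta and $L$-values, followed by singularity analysis) do not extend when the $h_i$ couple different Euler factors. The difficulty is precisely the function-field analogue of the obstruction that renders the classical Hardy--Littlewood autocorrelation conjecture for sums of two squares open over $\mathbb{Z}$. A fully uniform proof of Conjecture~\ref{conj} as stated therefore seems out of reach of the methods above, which nevertheless suffice to establish it in the large finite field limit.
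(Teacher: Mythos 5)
You correctly identify that Conjecture~\ref{conj} is genuinely open: the paper states it as a conjecture and only proves it in the large finite field limit (Theorem~\ref{thm:b}). Your sketch of the large finite field case---compute the Galois monodromy of the associated family and apply an equidistribution/Chebotarev argument in the spirit of \cite{ABR,CarmonRudnick}---is indeed the route the paper takes in \S\ref{sec:signed}, where the relevant Galois group is computed to be a fiber product of hyperoctahedral groups (Proposition~\ref{prop:Galois_group_short_interval}) and the Chebotarev input is the explicit quantitative version from \cite{BBF}. You defer this part but are pointing in the right direction.

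Where your account genuinely diverges from the paper, and where there is a gap, is in the heuristic motivation for the form of the conjecture. You propose a generating function $Z_h(u;q)$ with a square-root singularity at $u=1/q$ and a local/tail split at degree cutoff $d$, asserting that the local part ``contributes precisely the partial product $\prod_{\deg P\le d}\delta_{q,h}(P)/\delta_{q,0}(P)^k$.'' As written this is circular: the singular series is the conclusion you want to derive, not a consequence of the split, and nothing in your sketch explains why the local factor at $P$ should be the \emph{ratio} $\delta_{q,h}(P)/\delta_{q,0}(P)^k$ rather than, say, $\delta_{q,h}(P)$ alone, nor why the tail contribution is asymptotically $\langle b_q\rangle^k$. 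The paper's heuristic (\S\ref{sec:heuristics}) actually derives this: it posits that $b_q(f+h_1),\dots,b_q(f+h_k)$ behave like i.i.d.\ random variables except for dependence modulo fixed polynomials; by the Chinese Remainder Theorem this dependence reduces to prime powers $P^\nu$, where the true joint density is $\delta_{q,h}(P)$ while full independence would predict $\delta_{q,0}(P)^k$; correcting the i.i.d.\ prediction $\langle b_q\rangle^k$ by the product over $P$ of these ratios yields exactly $\mathfrak{S}_{q,h}\cdot\langle b_q\rangle^k$. The paper also establishes that each $\delta_{q,h}(P)$ exists (by a monotonicity argument), that the Euler product $\mathfrak{S}_{q,h}$ converges, and that it vanishes exactly when there is a local obstruction (Corollary~\ref{cor:convergence}, Corollary~\ref{cor:localobstruction}); these are needed to make even the heuristic coherent and do not appear in your sketch.
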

The constant $\mathfrak{S}_{q,h}$ comes from the same heuristics that led to  the constant $\mathfrak{S}_h$ of \eqref{eq:autocorrelation}, see Section~\ref{sec:heuristics} for details, so Conjecture~\ref{conj} is in perfect analogy with \eqref{eq:autocorrelation}. 
We note that the product  $\mathfrak{S}_{q,h}$ always converges and $\mathfrak{S}_{q,h}>0$ if and only if there are no local obstructions 
(Corollary~\ref{cor:localobstruction}). 
%Here a local obstruction at a prime $P$ means that for some $\nu>0$ there exists no $f$ for which the system of equations in the variables $A_i,B_i$
%\[
%f+h_i\equiv A_i^2+TB_i^2\mod P^{\nu}, \qquad i=1,\ldots, k
%\] 
%is solvable. In case of a local obstruction, it is obvious that the autocorrelation is zero. 

\subsection{Results and method}
As mentioned before, the main goal of this work is to give evidence for \eqref{eq:autocorrelation}
by studying its function field analogue, Conjecture \ref{conj}.
We show that the local factors $\delta_{q,h}(P)$ can be computed in theory and we carry out this computation in special cases like $k=2$. 
In Section~\ref{sec:numerics}, we provide numerical evidence that supports Conjecture~\ref{conj}, 
which is more extensive than what can be done in number fields, as there is a fast algorithm to compute the analogue of $b$.
We then prove Conjecture~\ref{conj} in the large finite field limit,
where the main term can be explicitly computed:

\begin{theorem}\label{thm:b}
Fix $n\geq 3$ and $k\geq 1$.
Then for $q$ an odd prime power
and $h_1,\dots,h_k\in\mathbb{F}_q[T]$ of degree less than $n$ and pairwise distinct,
\begin{eqnarray*}
  \left< \prod_{i=1}^k b_q(f+h_i) \right>_{f\in M_{n,q}} &=&
 \mathfrak{S}_{q,h} \cdot\left< b_q(f) \right>_{f\in M_{n,q}}^k + O_{n,k}(q^{-1/2}) \\
 &=&  \mathfrak{S}_h\cdot \frac{1}{4^{nk}}\left(2n\atop n\right)^k +O_{n,k}(q^{-1/2}) ,
\end{eqnarray*}
where the implied constant depends only on $n$ and $k$, and
\begin{eqnarray}
\label{eqn:def_Sh}
 \mathfrak{S}_h &=& 2^{k-\#\{h_1(0),\dots,h_k(0)\}}.
\end{eqnarray}
\end{theorem}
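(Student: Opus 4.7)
My plan is to split the proof into two parts: first establish the main analytic estimate (the first equality of the theorem), then reduce $\mathfrak{S}_{q,h}$ to $\mathfrak{S}_h$ via a local analysis to get the second equality.

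For the first equality I would use the character expansion of $b_q$. Since $T$ is always a local norm from $\mathbb{F}_q[\sqrt{-T}]$, $b_q(g)$ depends only on the $T$-free part of $g$, and for monic $g$ coprime to $T$ one has
\[
b_q(g) = \prod_{P\mid g}\frac{1+\chi_q(P)^{v_P(g)}}{2} = 2^{-\omega(s(g))}\sum_{d\mid s(g)}\chi_q(d),
\]
where $\chi_q$ is the quadratic character attached to $\mathbb{F}_q(T)(\sqrt{-T})/\mathbb{F}_q(T)$, $s(g)$ is the squarefree part of $g$, and $\omega$ counts distinct prime factors. Substituting into $\sum_{f\in M_{n,q}}\prod_i b_q(f+h_i)$ and switching the order of summation yields, indexed by tuples of squarefree monic divisors $(d_1,\ldots,d_k)$ with $d_i\mid f+h_i$, inner sums of the form $\sum_f \chi_q(d_1\cdots d_k)\cdot(\text{indicator of divisibility})$. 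By Weil's bound (Riemann hypothesis for $L$-functions of Dirichlet characters over $\mathbb{F}_q[T]$ twisted by $\chi_q$), a non-trivial character contributes $O(q^{n-1/2})$, while the trivial-character contributions assemble into the Euler product $\mathfrak{S}_{q,h}\cdot\langle b_q\rangle_{M_{n,q}}^k$. Since the relevant divisors have degree at most $n$, the number of outer-sum terms is polynomially bounded in $q$, so the cumulative error stays at $O_{n,k}(q^{-1/2})$ after normalization.

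For the second equality, Gorodetsky's theorem gives $\langle b_q\rangle_{M_{n,q}}=K_q\cdot 4^{-n}\binom{2n}{n}+O(q^{-1/2})$ with $K_q=1+O(q^{-1})$, so it suffices to prove $\mathfrak{S}_{q,h}=\mathfrak{S}_h+O_{n,k}(q^{-1/2})$. I would analyze each local factor $\delta_{q,h}(P)/\delta_{q,0}(P)^k$ separately. At the ramified prime $T$, local class field theory shows that the norm subgroup has index two in $\mathbb{F}_q((T))^\times$, and concretely a unit $u\in\mathbb{F}_q[[T]]^\times$ is a local norm iff $u(0)\in(\mathbb{F}_q^\times)^2$ (by an easy Hensel argument). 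Thus $\delta_{q,0}(T)=1/2$, and $\delta_{q,h}(T)$ reduces to the density of $a\in\mathbb{F}_q$ with $a+h_i(0)\in(\mathbb{F}_q^\times)^2$ for every $i$. Expanding $\prod_i(1+\eta(a+h_i(0)))$ (with $\eta$ the quadratic character of $\mathbb{F}_q$) and applying Weil's bound to each resulting character sum gives $\delta_{q,h}(T)=2^{-m}+O(q^{-1/2})$, where $m=\#\{h_1(0),\ldots,h_k(0)\}$; hence the factor at $T$ equals $2^{k-m}+O(q^{-1/2})=\mathfrak{S}_h+O(q^{-1/2})$. For a positive-degree prime $P\neq T$, when the $h_i$ are pairwise distinct mod $P$ a direct computation in the unramified quadratic local extension gives a factor $1+O(|P|^{-2})$, and the product over all such $P$ converges absolutely. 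For the finitely many bad primes $P\mid\prod_{i<j}(h_i-h_j)$, the factor is $1+O(|P|^{-1})$; since any such positive-degree prime satisfies $|P|\geq q$, their total contribution is $1+O_{n,k}(q^{-1})$. Combining yields $\mathfrak{S}_{q,h}=\mathfrak{S}_h(1+O_{n,k}(q^{-1/2}))$.

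The hardest step is the first equality: the character-sum expansion of $\prod_i b_q(f+h_i)$ must be performed carefully enough to isolate the precise main term $\mathfrak{S}_{q,h}\cdot\langle b_q\rangle^k$ rather than a close approximation, Weil's bound has to be applied uniformly in the divisor tuples, and the small but subtle contributions from non-squarefree $f+h_i$ (a set of density $O(q^{-1})$) and from the ramified prime $T$ have to be accounted for precisely.
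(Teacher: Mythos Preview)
Your plan for the second equality --- reducing $\mathfrak{S}_{q,h}$ to $\mathfrak{S}_h$ by isolating the factor at $T$ and applying the Weil bound to the character sum $\sum_{\alpha}\prod_{\beta}\eta(\alpha+\beta)$ --- is correct and is exactly what the paper does.

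The first equality, however, has a real gap. Your identity $b_q(g)=2^{-\omega(s(g))}\sum_{d\mid s(g)}\chi_q(d)$ is fine, but when you substitute it and try to swap summations, the factor $2^{-\sum_i\omega(s(f+h_i))}$ \emph{depends on $f$} and cannot be pulled outside. After swapping you do not get an inner sum of the form $\sum_f\chi_q(d_1\cdots d_k)\cdot[\text{divisibility}]$; the coefficient $\chi_q(d_1\cdots d_k)$ is constant in $f$, and what remains is $\sum_{f:\,d_i\mid s(f+h_i)}\prod_i 2^{-\omega(s(f+h_i))}$, a weighted count with no character varying in $f$ for Weil to act on. This fluctuating $2^{-\omega}$ is precisely what makes $b$ harder than $r$ (for which $r_q(f)=\sum_{d\mid f}\chi_q(d)$ is unweighted, and Estermann's method works). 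No standard bound turns the sum over the $d_i$ into the Euler product $\mathfrak{S}_{q,h}\cdot\langle b_q\rangle^k$ in the way you describe.

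The paper avoids this obstacle by a different, Galois-theoretic route: for $f$ squarefree with $f(0)\neq0$, $b_q(f)$ is read off from the Frobenius conjugacy class in the hyperoctahedral group $\mathrm{H}_n=\mathbb{F}_2\wr S_n$ acting on the roots of $f(-T^2)$. Writing $M_{n,q}$ as the set of specializations of a polynomial with free low coefficients, the key step is to compute the Galois group of $\prod_i(f+h_i)(-T^2)$ over the function field and show it equals a fiber product $\mathrm{H}_n^{\mathcal{I}_h}$ of copies of $\mathrm{H}_n$, with the fibering dictated exactly by the coincidences among $h_1(0),\dots,h_k(0)$. An explicit Chebotarev theorem then gives the correlation as the group average $\langle b,\dots,b\rangle_{\mathrm{H}_n^{\mathcal{I}_h}}$ up to $O(q^{-1/2})$, and an elementary combinatorial computation yields $\langle b,\dots,b\rangle_{\mathrm{H}_n^{\mathcal{I}_h}}=\mathfrak{S}_h\cdot\bigl(4^{-n}\binom{2n}{n}\bigr)^k$. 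Note that the logical flow is the reverse of yours: the $\mathfrak{S}_h$ form is proved first, and the $\mathfrak{S}_{q,h}$ form is then deduced from $\mathfrak{S}_{q,h}=\mathfrak{S}_h+O(q^{-1/2})$.
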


We would like to highlight an interesting phenomenon: 
In the previously mentioned results on arithmetic functions that depend only on the cycle structure, %discussed above,%like higher divisor functions, the von Mangoldt function, and the M\"obius function, 
 it was shown that in the large finite field limit they become independent; 
hence the correlation dependence on $h$ disappears and can be seen only in the error term, cf.\ \cite{KRG}. 
However, here, the dependence on the $h_i$ is non-trivial and agrees with the heuristics. 
The simple form of $\mathfrak{S}_h$ (as opposed to $\mathfrak{S}_{q,h}$) can be read as saying that in the large finite field limit only the correlation modulo the prime $T$ remains,
as $f\equiv A^2+TB^2\mbox{ mod }T$ for some $A,B\in\mathbb{F}_q[T]$ if and only if $f(0)$ is a square in $\mathbb{F}_q$.

%We would like to emphasize that 
%here the correlation does not disappear in the large finite field limit,
%in the sense that the correlation factor $\mathfrak{S}_h$ does depend on $h$,
%as opposed to similar results for correlations in the large finite field limit in the literature,
%see e.g.~the introduction to \cite{RR}.
In fact we prove Theorem \ref{thm:b} for correlations in {\em short intervals} instead of $M_{n,q}$ (see Theorem \ref{thm:b_short})
and we also phrase Conjecture \ref{conj} in this generality (see Conjecture \ref{conj_short}).
We derive Theorem \ref{thm:b} from a general result on correlations of arithmetic functions
that depend on what we call {\em signed factorization type}. This result reduces the large finite field limit of these correlations to combinatorial problems
in certain finite groups; namely, fiber products of hyperoctahedral groups.
%which is a generalization of the above mentioned cycle structure.
In Section \ref{sec:signed}, we explain this general result (Theorem~\ref{thm:general})
and its proof, the main part of which consists of 
showing that the Galois group of a certain polynomial with a few variable coefficients
is a fiber product of hyperoctahedral groups.
Section \ref{sec:applications} then contains
the combinatorics for $b_q$, leading to Theorem \ref{thm:b},
as well as of a few other arithmetic functions.
In particular, we compute (function field analogues of) the average of $b$ on shifted primes (Theorem \ref{thm:r_Lambda}),
%a problem that also has a long history, as we describe below,
and autocorrelations of higher divisor functions twisted by a quadratic character (Theorem \ref{thm:dkchi});
see the corresponding sections for more on the history and motivation of these questions.

\pagebreak

\section{Conjectural correlation of sums of two squares}
\label{sec:heuristics}

\subsection{Preliminaries}
\label{sec:prelim}

With the convention $\deg 0=-\infty$, the norm on $\FF_q[T]$ is given by 
$|f| = q^{\deg f}$.
A short interval around a polynomial $f_0$ of degree $\deg f_0=n$ is defined analogously to the short intervals of integers $|n-x|< x^{\epsilon}$,
cf.~\cite{KeatingRudnick}: 
\[
I_{q}(f_0,\epsilon)
:= \{ f\in\mathbb{F}_q[T]: |f-f_0|<|f_0|^{\epsilon}\} 
= \left\{f_0 + \sum_{i<\epsilon n} a_iT^i : a_0,\dots,a_{\lfloor\epsilon n\rfloor}\in\mathbb{F}_q\right\}.
\]
So, roughly speaking, $f\in I_{q}(f_0,\epsilon)$ if and only if $1-\epsilon$ fraction of its higher coefficients coincide with those of $f_0$. 

Every prime polynomial $P\in M_q$ defines a unique $P$-adic valuation $v_P\colon \mathbb{F}_q(T)\rightarrow\mathbb{Z}\cup\{\infty\}$
with $v_P(P)=1$.
The condition that a prime number $p$ is inert in $\mathbb{Z}[i]$ corresponds  to the condition that a prime polynomial $P\in \FF_q[T]$ is irreducible in $\FF_q[\sqrt{-T}]$; equivalently, that $P(-T^2)$ is irreducible in $\mathbb{F}_q[T]$. 
Hence, one of the reasons why $b_q$ is a suitable function field analogue of $b$ is that
it has a multiplicative description very similar to Fermat's multiplicative description of $b$:

\begin{proposition}\label{prop:Fermat}
Let $f\in M_{q}$. Then $b_q(f)=1$ if and only if 
$v_P(f)\equiv 0\mod 2$
for every prime polynomial $P\in\mathbb{F}_q[T]$ with $P(-T^2)\in\mathbb{F}_q[T]$ irreducible.
%in the prime factorization of $f$,
%every prime polynomial $P\in\mathbb{F}_q[T]$ with $P(-T^2)\in\mathbb{F}_q[T]$ irreducible
%appears with even multiplicity.
\end{proposition}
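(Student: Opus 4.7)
The plan is to identify the condition $b_q(f)=1$ with $f$ lying in the image of the norm map of the ring extension $R\subseteq R'$, where $R=\mathbb{F}_q[T]$ and $R'=R[S]/(S^2+T)\cong\mathbb{F}_q[S]$ via $T=-S^2$, and then describe that image using unique factorization in the PID $R'$. The nontrivial $R$-automorphism $\sigma$ of $R'$ sends $S\mapsto-S$, and for $\alpha,\beta\in R$ one computes $N(\alpha+\beta S)=\alpha^2+T\beta^2$, so by definition $b_q(f)=1$ if and only if $f\in N(R')$.

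I would then classify the primes of $R$ by their behavior in $R'$. Since $q$ is odd, the relative discriminant is supported at $T$, which is thus the unique ramified prime. For a prime $P\neq T$, the splitting is read off from the factorization of its image $P(-S^2)\in\mathbb{F}_q[S]$, so $P$ is inert precisely when $P(-S^2)$ is irreducible in $\mathbb{F}_q[S]$; after renaming the variable this is exactly the condition that $P(-T^2)$ is irreducible in $\mathbb{F}_q[T]$, as in the statement.

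The core claim is then that for monic $f\in R$, being a norm is equivalent to $v_P(f)$ being even at every inert $P$. The forward direction is immediate: if $P$ is inert then $PR'$ is prime and $\sigma$-invariant, so $v_P(N(g))=v_P(g)+v_P(\sigma g)=2v_P(g)$. For the converse, write $f=u\prod_PP^{e_P}$ with $u\in\mathbb{F}_q^\times$ and build a preimage under $N$ prime by prime: an inert $P$ with $e_P=2m_P$ contributes $N(P^{m_P})$; the ramified prime contributes $T^{e_T}=N(S^{e_T})$ since $N(S)=-S^2=T$; for a split $P$ the polynomial $P(-S^2)$ factors in $\mathbb{F}_q[S]$ as $\pi_P\,\sigma(\pi_P)$, and choosing $\pi_P$ to be monic in $S$ (possible after a quick leading-coefficient check on $P(-S^2)$) gives $N(\pi_P)=P$. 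Multiplying yields $h\in R'$ with $f=u\cdot N(h)$.

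The main technical step is eliminating the scalar $u$. A short bookkeeping calculation, using that a polynomial of $T$-degree $n$ has $S$-degree $2n$ with leading coefficients differing by $(-1)^n$, shows that the $h$ just constructed has $S$-leading coefficient $\pm 1$, and consequently $N(h)$ is monic as a polynomial in $T$. Since $f$ is assumed monic, this forces $u=1$, so $f=N(h)$ and $b_q(f)=1$, completing the proof.
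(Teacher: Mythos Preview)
Your argument is correct. The paper does not give its own proof of this statement; it simply cites \cite[Thm.~2.5]{BSW}, so there is no in-paper argument to compare against. Your approach via the norm form of the quadratic extension $\mathbb{F}_q[S]/\mathbb{F}_q[T]$ with $T=-S^2$ is the natural one and presumably essentially what \cite{BSW} does as well.

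One small remark: the final paragraph on eliminating the scalar $u$ is unnecessary. Since $f\in M_q$ is monic and the $P$ in the factorization $f=u\prod_P P^{e_P}$ are monic primes, you already have $u=1$ from the outset; your constructed $h$ then satisfies $N(h)=\prod_P P^{e_P}=f$ directly. The leading-coefficient bookkeeping you outline is correct, but it can be skipped entirely.
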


\begin{proof}
See \cite[Thm.~2.5]{BSW}.
\end{proof}

If we denote by 
\begin{eqnarray}
\label{eqn:chi_q}
 \chi_q\colon \mathbb{F}_q[T]\rightarrow\{\pm1,0\}, \qquad \chi_q(f) = \begin{cases}
 0, & \mbox{if }f(0)=0,\\
 1, & \mbox{if }f(0)\in\mathbb{F}_q^{\times2}, \\
 -1, & \mbox{otherwise.}
 \end{cases}
\end{eqnarray}
the quadratic Dirichlet character modulo $T$, 
%defined by 1$ if $f(0)$ is a nonzero square, $\chi_q(f)=-1$ if $f(0)$ is a nonsquare, and $\chi_q(f)=0$ if $f(0)=0$,
then the condition that $P(-T^2)$ is irreducible is equivalent to $\chi_q(P)=-1$,
see \cite[\S2]{BSW}.
%Motivated by this multiplicative description of $b_q$, we extend the definition of $b_q$ from $M_{n,q}$ to $\mathbb{F}_q[T]$
%by setting 
%$$
% b_q(f)=b_q({\rm lc(f)}^{-1}f),
%$$
%where ${\rm lc}(f)$ denotes the leading coefficient of $f$.

\subsection{Heuristics}

The notation of this and the following subsection is taken from a preliminary version of \cite{FKR},
and all proofs follow the corresponding proofs there.
For $P\in\mathbb{F}_q[T]$ monic irreducible and $\nu\in\mathbb{N}$ we define $\mathcal{A}_q(P^\nu)\subseteq\mathbb{F}_q[T]/(P^\nu)$ by
\begin{eqnarray*}
 \mathcal{A}_{q}(P^\nu) &=& \{ f + (P^\nu) : f\in M_q, b_q(f)=1 \}  \\
 &=& \{A^2+TB^2 : A,B\in\mathbb{F}_q[T]/(P^\nu)\}. 
\end{eqnarray*}
Moreover, for $h=(h_1,\dots,h_k)\in\mathbb{F}_q[T]^k$ we let
\begin{eqnarray*} 
\mathcal{A}_{q,h}(P^\nu) &=& \{ f\in\mathbb{F}_q[T]/(P^\nu) : f+h_1,\dots,f+h_k\in\mathcal{A}_q(P^\nu)\}.
\end{eqnarray*}
Note that if $f\in\mathcal{A}_{q,h}(P^\nu)$ then trivially $f\in\mathcal{A}_{q,h}(P^\xi)$ for all $\xi\leq\nu$,
hence the sequence $|P|^{-\nu}\#\mathcal{A}_{q,h}(P^\nu)$ is monotone decreasing and therefore the limit
\begin{eqnarray*}
 \delta_{q,h}(P) = \lim_{\nu\rightarrow\infty}|P|^{-\nu}\#\mathcal{A}_{q,h}(P^\nu)
\end{eqnarray*}
exists. We will also show below (Corollary~\ref{cor:localobstruction}) that $\delta_{q,h}(P)=0$ if and only if $\mathcal{A}_{q,h}(P^{\nu})=\emptyset$ for some $\nu>0$ in which case we say that there exists \emph{local obstruction} at $P$.

Our heuristic assumption is that the $b_q(f+h_i)$, $i=1,\ldots, k$, behave like i.i.d.\ random variables as $f$ is randomly picked from a short interval $|f-f_0|< |f_0|^{\epsilon}$, up to a correction factor coming  from the fact that they are not independent modulo polynomials $g$. 
By the Chinese Remainder Theorem, one may reduce to $g=P^{\nu}$ a prime power, for which the actual mean of $\prod_ib_q(f+h_i)$ is given 
by $\delta_{q,h}(P)$ while the random model would predict a mean of $\delta_0(P)^k$.  
This leads to
\begin{conjecture}\label{conj_short}
For every $k\geq 1$, $d\geq 1$ and $1\geq \epsilon>0$ there exists $N\geq 1$ such that
for $q$ an odd prime power, 
$f_0\in\mathbb{F}_q[T]$ monic of degree $n\geq N$ and $h_1,\dots,h_k\in\mathbb{F}_q[T]$ of degree less than $d$ and pairwise distinct,
\begin{eqnarray}\label{eq:CONJ}
\left<\prod_{i=1}^k b_q(f+h_i)\right>_{|f-f_0|<|f_0|^\epsilon} &\sim& \mathfrak{S}_{q,h}\cdot \left<b_q(f)\right>_{f\in M_{n,q}}^k\\
&\sim& \mathfrak{S}_{q,h}\cdot K_q^k\cdot\frac{1}{4^{nk}}\left(2n\atop n\right)^k
%+ O((qn)^{-1/2})
%\quad q^n\rightarrow\infty
\end{eqnarray}
uniformly as $q^n\rightarrow\infty$, where
%where the implied constant depends only on $m$ and $k$, and
$\mathfrak{S}_{q,h}$ and $K_q$ are defined as in \eqref{eqn:def_Sqh} resp.~\eqref{eqn:def_Kq}.
\end{conjecture}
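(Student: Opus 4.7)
My plan is to attack Conjecture~\ref{conj_short} in two stages, first reducing the short-interval correlation to the full $M_{n,q}$ correlation of Conjecture~\ref{conj}, and then attacking that full average by a sieve whose local weights reproduce $\delta_{q,h}(P)$. A short interval $I_q(f_0,\epsilon)$ of degree $n$ is parametrized by the $\lfloor\epsilon n\rfloor+1$ lowest coefficients of $f$, while the higher ones are frozen. Expanding the indicator of $I_q(f_0,\epsilon)$ in the even characters of $\mathbb{F}_q[T]$ in the spirit of Keating--Rudnick, the gap between the short-interval average of $\prod_i b_q(f+h_i)$ and the $M_{n,q}$ average becomes a family of twisted character sums. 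Using the multiplicative description of $b_q$ in Proposition~\ref{prop:Fermat}, each factor $b_q(f+h_i)$ can be written as a convolution involving the quadratic character $\chi_q$ pulled back along the norm from $\mathbb{F}_q[\sqrt{-T}]$, and the required square-root cancellation should follow from Weil-type bounds on the $L$-functions attached to these convolutions, uniformly over the short-interval characters.

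Once the short-interval statement is reduced to the full average, I would attack the latter by parametrizing polynomials with $b_q=1$ as norm forms $A^2+TB^2$, so that the $k$-fold product literally counts $k$-tuples of representations. Running a Selberg--Delange style sieve on the associated multi-variable zeta series, inclusion-exclusion over primes $P$ with $\chi_q(P)=-1$ produces local densities that, via the Chinese Remainder Theorem and Euler product expansion, assemble into the singular series $\mathfrak{S}_{q,h}$. The main term is exactly $\mathfrak{S}_{q,h}\cdot\langle b_q\rangle^k$, and Gorodetsky's formula for $\langle b_q\rangle_{f\in M_{n,q}}$ converts this into the second asymptotic in \eqref{eq:CONJ}.

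In the large finite field regime ($n$ fixed, $q\to\infty$), the sieve terminates after finitely many steps and the count of $\mathbb{F}_q$-points on the resulting affine variety can be evaluated by Chebotarev over $\mathbb{F}_q[T]$; this is essentially Theorem~\ref{thm:b_short}, and it passes through the signed-factorization-type machinery of Section~\ref{sec:signed}. One identifies the Galois group of the relevant specialization polynomial as a fiber product of hyperoctahedral groups, and the group-theoretic count then reproduces the simplified constant $\mathfrak{S}_h$, which is also the $q\to\infty$ limit of $\mathfrak{S}_{q,h}$.

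\textbf{The main obstacle} is the complementary regime, $q$ fixed and $n\to\infty$, which is a direct function-field analogue of the Hardy--Littlewood $k$-tuple conjecture for the indicator $b$; as Hooley already noted, even the pair correlation presents ``much the same difficulties'' as the prime $k$-tuple problem. Pushing the sieve over primes $P$ of degree approaching $n/2$ calls for cancellation in bilinear forms in $\chi_q$-twisted divisor-type functions that is beyond what Weil bounds or the explicit formula provide. A genuinely new input — for example, a geometric reorganization of the $k$-fold norm count into cohomology on a single smooth variety, or deep equidistribution for $L(s,\chi_q\psi)$ uniform in the Hayes character $\psi$ — would be required to handle the fixed-$q$, $n\to\infty$ uniformity asserted in Conjecture~\ref{conj_short}, and in its absence this regime appears out of reach.
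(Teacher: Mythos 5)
The statement you are addressing is, as the paper's own numbering makes clear, a \emph{conjecture}: it is not proved there. The paper offers a probabilistic heuristic for it (Section~\ref{sec:heuristics}) and proves only the large finite field limit, via Galois theory and a Chebotarev theorem, as Theorem~\ref{thm:b_short}. Your proposal correctly identifies this state of affairs, and your closing paragraph accurately names the genuine obstacle: for $q$ fixed and $n\to\infty$ the statement is a function-field analogue of the Hardy--Littlewood $k$-tuple problem, and no known technique gives the required cancellation. To that extent your assessment matches the paper's.

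Two of your intermediate steps, however, are not what the paper does and would not obviously work as stated. First, the paper does \emph{not} reduce the short-interval correlation to the $M_{n,q}$ correlation. In the large finite field limit the proof of Theorem~\ref{thm:b_short} handles short intervals directly, by computing the Galois group of $\prod_i f_i(-T^2)$ for $f_i=f_0+h_i+\sum_j A_jT^j$ over $\FF_q(A_0,\dots,A_m)$ (Proposition~\ref{prop:Galois_group_short_interval}), and the full $M_{n,q}$ case (Theorem~\ref{thm:b}) is recovered as the special case $\epsilon=1$, $f_0=T^n$. For $q$ fixed and $n\to\infty$, the character-expansion reduction you sketch would itself need uniform equidistribution input (control of $\sum_f b_q(f)\chi(f)$ over Hayes characters $\chi$ of conductor growing with $n$) that is essentially as hard as the target statement, so it is not a true reduction. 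Second, the singular series $\mathfrak{S}_{q,h}$ in the paper is not extracted from a Selberg--Delange sieve on norm forms; it comes from the i.i.d.\ heuristic corrected by the local densities $\delta_{q,h}(P)/\delta_{q,0}(P)^k$, and the sieve you describe would run into the same bilinear obstructions you flag at the end. So the honest content of your proposal is that Theorem~\ref{thm:b_short} gives the result when $q\to\infty$ with $n$ fixed, and that the complementary regime remains open, which is precisely the paper's position and why this is stated as a conjecture rather than a theorem.
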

We note that if there exists a local obstruction, then clearly both sides in \eqref{eq:CONJ} 
equal $0$, so in this case the conjecture is uninteresting and correct.

\subsection{The singular series}
We now show that the $\delta_{q,h}(P)$ can be computed in theory and give estimates in general and concrete formulas in certain special cases. In particular, we show that the infinite product in \eqref{eqn:def_Sqh} that defines $\mathfrak{S}_{q,h}$ indeed converges.

\begin{lemma}\label{lem:characterize}
Let $P\in\mathbb{F}_q[T]$ be monic irreducible and $f\in\mathbb{F}_q[T]$.
Assume that $f\not\equiv 0\mod P^\nu$ and
write $f\equiv P^\alpha g\mod P^\nu$ with maximal $0\leq\alpha<\nu$ and any suitable $g\in\mathbb{F}_q[T]$.
\begin{enumerate}
\item If $\chi_q(P)=-1$, then $f\in\mathcal{A}_q(P^\nu)$ if and only if $\alpha$ is even.
\item If $\chi_q(P)=0$, then $f\in\mathcal{A}_{q}(P^\nu)$ if and only if $\chi_q(g)=1$.
\item If $\chi_q(P)=1$, then $f\in\mathcal{A}_q(P^\nu)$.\label{lem:characterize3}
\end{enumerate}
\end{lemma}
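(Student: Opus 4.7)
The strategy is to identify $\mathcal{A}_q(P^\nu)$ as the image of the norm map $N\colon R/(P^\nu)\to\mathbb{F}_q[T]/(P^\nu)$, where $R=\mathbb{F}_q[T][Y]/(Y^2+T)$ and $N(A+BY)=A^2+TB^2$, and to analyze this image separately in each of the three cases $\chi_q(P)\in\{-1,0,+1\}$, which correspond to $P$ being inert, ramified, or split in $R$. A key preliminary I would establish is the equivalence that $\chi_q(P)=-1$ if and only if $-T$ is a non-square in $\mathbb{F}_q[T]/(P)$; this follows from the paper's observation that $\chi_q(P)=-1$ iff $P(-T^2)$ is irreducible, together with a short Galois argument on the roots $\pm\sqrt{-\alpha}$ of $P(-T^2)$ (where $\alpha$ is a root of $P$): the degree-$2\deg P$ polynomial $P(-T^2)$ is irreducible exactly when $\sqrt{-\alpha}\notin\mathbb{F}_q(\alpha)\cong\mathbb{F}_q[T]/(P)$.

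Granting this, cases (1) and (3) are handled by standard local arguments. For (3), $-T$ is a square modulo $P$, so Hensel applied to $Y^2+T$ (whose derivative $2Y$ is a unit at each root) yields a factorization $Y^2+T\equiv(Y-\theta)(Y+\theta)\pmod{P^\nu}$ with $\pm\theta$ distinct modulo $P^\nu$; by CRT this gives $R/(P^\nu)\cong\mathbb{F}_q[T]/(P^\nu)\times\mathbb{F}_q[T]/(P^\nu)$ with norm corresponding to $(x,y)\mapsto xy$, and writing $f=f\cdot 1$ puts every $f$ in the image. For (1), $Y^2+T$ stays irreducible modulo $P$; extending $v_P$ to $R_{(P)}$ keeps $P$ as uniformizer, and a quick check using the basis $\{1,Y\}$ (together with the linear independence of $1,Y$ in the residue field of the inert extension) gives $v_P(A+BY)=\min(v_P(A),v_P(B))$ for all $A,B\in\mathbb{F}_q[T]_{(P)}$. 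Hence $v_P(N(A+BY))=2v_P(A+BY)$ is always even, so odd-valuation elements lie outside the image; surjectivity onto even-valuation elements follows from surjectivity of the residue-field norm $\mathbb{F}_{q^{2\deg P}}^\times\to\mathbb{F}_{q^{\deg P}}^\times$ together with a routine step-by-step Hensel-style lift from modulus $P^j$ to $P^{j+1}$.

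Case (2) is the main obstacle, since for the ramified prime $P=T$ the clean split/inert dichotomy is lost and I must argue directly via the $T$-adic valuation. For the ``if'' direction, assume $\chi_q(g)=1$: then $g(0)$ is a nonzero square and Hensel applied to $X^2-g$ (a simple root at $X(0)=\sqrt{g(0)}$, valid since $q$ is odd) yields $g\equiv h^2\pmod{T^{\nu-\alpha}}$ for some $h$; combining with $T^\alpha$ expresses $f$ modulo $T^\nu$ as $(T^{\alpha/2}h)^2+T\cdot 0^2$ when $\alpha$ is even and as $0^2+T(T^{(\alpha-1)/2}h)^2$ when $\alpha$ is odd. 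For the converse, if $f\equiv A^2+TB^2\pmod{T^\nu}$, set $a=v_T(A)$ and $b=v_T(B)$; then $v_T(A^2)=2a$ is even while $v_T(TB^2)=1+2b$ is odd, so the two summands cannot cancel at leading order and $v_T(f)=\min(2a,1+2b)$. Dividing out this power of $T$ shows that the constant term of $g$ equals either $(A/T^a)(0)^2$ (when $\alpha=2a$) or $(B/T^b)(0)^2$ (when $\alpha=1+2b$), in either case a nonzero square, so $\chi_q(g)=1$, completing the proof.
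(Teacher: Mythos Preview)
Your proof is correct, and it takes a genuinely different route from the paper's.

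The paper argues \emph{globally}: it observes that $\mathcal{A}_q(P^\nu)$ is multiplicatively closed and contains the relevant power of $P$, and then, for the ``if'' directions of (1)--(3), it invokes the prime polynomial theorem in arithmetic progressions to produce a prime $Q$ congruent to $g$ modulo $P^\nu$ (and to $1$ modulo $T$ when needed), so that $b_q(Q)=1$ by the Fermat-type characterization (Proposition~\ref{prop:Fermat}); the ``only if'' direction of (1) is read off directly from Proposition~\ref{prop:Fermat}, and that of (2) from the obvious fact that the lowest nonzero coefficient of $A^2+TB^2$ is a square. In contrast, your argument is entirely \emph{local}: you identify $\mathcal{A}_q(P^\nu)$ with the image of the norm from $R/(P^\nu)$, $R=\mathbb{F}_q[T][Y]/(Y^2+T)$, and analyze this image case by case via Hensel's lemma, the structure of $R/(P^\nu)$ in the split/inert/ramified cases, and surjectivity of the finite-field norm. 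Your approach is more self-contained---it avoids both Proposition~\ref{prop:Fermat} and the Dirichlet-type input---at the cost of being longer and requiring the (standard but unwritten) inductive lift of norm surjectivity from $P^j$ to $P^{j+1}$ in the inert case. The paper's approach is shorter once those two external inputs are granted; yours makes the local picture more transparent and would generalize more readily to other norm forms.
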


\begin{proof}
Note that $\mathcal{A}_q(P^\nu)$ is closed under multiplication
and contains $P$ if $\chi_q(P)\in\{0,1\}$, and $P^2$ if $\chi_q(P)=-1$.
If $\chi_q(P)\in\{\pm1\}$, then $g\in\mathcal{A}_q(P^\nu)$:
This follows for example from
the prime polynomial theorem in arithmetic progressions \cite{Rosen}
which gives a prime polynomial $Q$ with $Q\equiv g\mod P^\nu$ and $Q\equiv 1\mod T$,
hence $b_q(Q)=1$ (Proposition \ref{prop:Fermat}).
This proves (3) and the `if' part of (1).
For the `only if' part of (1),
note that 
if $b_q(f)=1$ and $f\equiv P^\alpha g\mod P^\nu$, then $v_P(f)=\alpha$ is even (Proposition \ref{prop:Fermat}).
For the `if' part of (2) we can again apply the prime polynomial theorem to get
a prime $Q$ with $Q\equiv g\mod P^\nu$, which then satisfies $\chi_q(Q)=\chi_q(g)=1$, hence $b_q(Q)=1$.
The `only if' part of (2) is obvious, since the lowest nonzero coefficient of $A^2+TB^2$ is a square.
\end{proof}

\begin{lemma}
\label{lem:size_of_Aq}
Let $P\in\mathbb{F}_q[T]$ be monic irreducible.
\begin{enumerate}
\item If $\chi_q(P)=-1$, then 
$$
 \#\mathcal{A}_q(P^\nu)=|P|^\nu\left(1-\frac{1}{|P|+1}\right)+\begin{cases}\frac{|P|}{|P|+1},&\nu\mbox{ odd}\\
\frac{1}{|P|+1},&\nu\mbox{ even}\end{cases}.
$$
\item If $\chi_q(P)=0$, then $\#\mathcal{A}_q(P^\nu) = \frac{|P|^\nu+1}{2}$.
\item If $\chi_q(P)=1$, then $\#\mathcal{A}_q(P^\nu)=|P|^\nu$.
\end{enumerate}
\end{lemma}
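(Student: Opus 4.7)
The plan is to read off $\#\mathcal{A}_q(P^\nu)$ from Lemma \ref{lem:characterize} by stratifying $\mathbb{F}_q[T]/(P^\nu)$ according to $P$-adic valuation. The key counting input, independent of the three cases, is that for each $0 \leq \alpha < \nu$ the number of residues $f \in \mathbb{F}_q[T]/(P^\nu)$ with exact $P$-adic valuation $\alpha$ equals $(|P|-1)|P|^{\nu-1-\alpha}$, since such an $f$ is given by $P^\alpha$ times a unit in $\mathbb{F}_q[T]/(P^{\nu-\alpha})$. Separately, the residue $f \equiv 0$ always lies in $\mathcal{A}_q(P^\nu)$ via $0 = 0^2 + T\cdot 0^2$, and must be added to each count.

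Case~(3) is immediate: Lemma \ref{lem:characterize}\eqref{lem:characterize3} places every nonzero residue in $\mathcal{A}_q(P^\nu)$, so together with $0$ we get $|P|^\nu$.

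For case~(1), Lemma \ref{lem:characterize}(1) restricts the sum to even $\alpha$, giving
\[
\#\mathcal{A}_q(P^\nu) \;=\; 1 + (|P|-1)\sum_{\substack{0\leq \alpha<\nu\\ \alpha \text{ even}}} |P|^{\nu-1-\alpha}.
\]
This is a geometric series in $|P|^{-2}$; its value depends on whether the last admissible index is $\nu-1$ or $\nu-2$, which accounts for the two cases in the claimed formula. Direct evaluation (using $(|P|-1)/(|P|^2-1) = 1/(|P|+1)$) gives the stated closed form.

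For case~(2), we have $P=T$, and Lemma \ref{lem:characterize}(2) additionally requires $g(0) \in \mathbb{F}_q^{\times 2}$. Since $q$ is odd, exactly half of $\mathbb{F}_q^\times$ consists of squares, so within each valuation stratum exactly half the residues qualify, contributing $\tfrac{|P|-1}{2}\,|P|^{\nu-1-\alpha}$. Summing over all $0\leq \alpha<\nu$ now gives a single geometric series telescoping to $(|P|^\nu-1)/2$, and adding the residue $f\equiv 0$ yields $(|P|^\nu+1)/2$.

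There is no genuine obstacle; the proof is a routine combination of Lemma \ref{lem:characterize} with a geometric sum. The only bookkeeping to be careful about is remembering to include $f\equiv 0$ in each count and tracking the parity of $\nu$ in case~(1).
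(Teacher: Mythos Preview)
Your argument is correct and is essentially the same as the paper's: both stratify $\mathbb{F}_q[T]/(P^\nu)$ by exact $P$-adic valuation, apply Lemma~\ref{lem:characterize} to decide which strata lie in $\mathcal{A}_q(P^\nu)$, add the residue $0$, and evaluate the resulting geometric sums. The paper only writes out the case $\chi_q(P)=-1$ with $\nu$ odd in detail and leaves the rest implicit, whereas you spell out all three cases, but the method is identical.
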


\begin{proof}
This follows by direct counting using Lemma \ref{lem:characterize}.
For example, if $\chi_q(P)=-1$ and $\nu$ is odd, then
the nonzero elements of $\mathcal{A}_q(P^\nu)$ are represented by polynomials
$\sum_{i=0}^{\nu-1} a_iP^i$ with ${\rm deg}(a_i)<{\rm deg}(P)$
and $\min\{i:a_i\neq 0\}$ even. Thus,
\begin{eqnarray*}
 \#\mathcal{A}_q(P^\nu)-1 &=& \sum_{\alpha=0\,{\mathrm even}}^{\nu-1}(|P|-1)|P|^{\nu-\alpha-1} = 
  %|P|^{\nu-1}(|P|-1)\sum_{\beta=0}^{\frac{\nu-1}{2}}(|P|^{-2})^\beta \\
 |P|^{\nu-1}(|P|-1)\frac{1-|P|^{-\nu-1}}{1-|P|^{-2}} ,
\end{eqnarray*}
from which the claim follows.
%$P^\alpha g$ with $0\leq\alpha\leq\nu-1$ even and $g$ of degree less than $(\nu-\alpha){\rm deg}(P)$
%and not divisible by $P$. Thus,
%$$
% \#\mathcal{A}_q(P^\nu) = \sum_{\alpha=0}^{\nu-1}(|P|^{\nu-\alpha-1}-|P|^{\nu-\alpha-2})
%$$
\end{proof}

Lemma~\ref{lem:size_of_Aq} immediately gives $\delta_{q,h}(P)$ in the special case $h=0$ (which we identify with the $1$-tuple  $(0)$) or $\chi_q(P)=1$:

\begin{corollary}\label{cor:deltah_chi_1}
Let $P\in\mathbb{F}_q[T]$ monic irreducible. 
If $\chi_q(P)=1$, then $\delta_{q,h}(P)=1$.
\end{corollary}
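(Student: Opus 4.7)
The plan is to deduce the corollary directly from Lemma~\ref{lem:size_of_Aq}(3), which already packages essentially all of the required content. Under the hypothesis $\chi_q(P)=1$, that lemma asserts $\#\mathcal{A}_q(P^\nu)=|P|^\nu$, i.e.\ the set $\mathcal{A}_q(P^\nu)$ exhausts the entire residue ring $\mathbb{F}_q[T]/(P^\nu)$. First I would recall the definition
\[
\mathcal{A}_{q,h}(P^\nu) = \{f\in\mathbb{F}_q[T]/(P^\nu) : f+h_i\in\mathcal{A}_q(P^\nu)\text{ for all }i\}.
\]
Since $\mathcal{A}_q(P^\nu)$ is already all of $\mathbb{F}_q[T]/(P^\nu)$, the tuple condition is vacuous: for any $f$ and any $h_i$ the translate $f+h_i$ automatically lies in $\mathcal{A}_q(P^\nu)$. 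Hence $\mathcal{A}_{q,h}(P^\nu)=\mathbb{F}_q[T]/(P^\nu)$ for every $\nu$, and passing to the limit
\[
\delta_{q,h}(P) = \lim_{\nu\to\infty}|P|^{-\nu}\#\mathcal{A}_{q,h}(P^\nu)
\]
immediately yields $\delta_{q,h}(P)=1$.

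Conceptually, $\chi_q(P)=1$ means that $P(-T^2)$ factors in $\mathbb{F}_q[T]$, so $P$ splits in $\mathbb{F}_q[\sqrt{-T}]$; locally at such a split prime the norm map $(A,B)\mapsto A^2+TB^2$ is surjective onto $\mathbb{F}_q[T]/(P^\nu)$, which is the structural reason why there can be no local obstruction at $P$ and why the density is exactly one. The substantive work has in fact been done upstream, in Lemma~\ref{lem:characterize}(3), whose proof uses the prime polynomial theorem in arithmetic progressions to realize an arbitrary residue modulo $P^\nu$ by a prime $Q$ with $b_q(Q)=1$; with that in hand the present corollary is a one-line extraction and there is no further obstacle to overcome.
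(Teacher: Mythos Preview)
Your proof is correct and follows essentially the same approach as the paper: both derive the corollary immediately from Lemma~\ref{lem:size_of_Aq}(3), noting that $\#\mathcal{A}_q(P^\nu)=|P|^\nu$ forces $\mathcal{A}_{q,h}(P^\nu)=\mathbb{F}_q[T]/(P^\nu)$ for all $\nu$.
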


\begin{corollary}\label{lem:delta0}
Let $P\in\mathbb{F}_q[T]$ monic irreducible. Then
\[
\delta_{q,0}(P) = 
	\begin{cases}
		1-\frac{1}{|P|+1},		&\chi_q(P)=-1\\
		\frac{1}{2},	&\chi_q(P)=0\\
         1,&\chi_q(P)=1.
	\end{cases}
\]
\end{corollary}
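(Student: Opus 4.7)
The plan is to derive this corollary immediately from Lemma \ref{lem:size_of_Aq} by taking the defining limit. Unpacking the definition in the case $k=1$ and $h=(0)$, we have $\mathcal{A}_{q,0}(P^\nu)=\{f\in\mathbb{F}_q[T]/(P^\nu):f\in\mathcal{A}_q(P^\nu)\}=\mathcal{A}_q(P^\nu)$, so
\[
\delta_{q,0}(P) \;=\; \lim_{\nu\to\infty}\frac{\#\mathcal{A}_q(P^\nu)}{|P|^\nu}.
\]
So once Lemma \ref{lem:size_of_Aq} gives us a closed form for $\#\mathcal{A}_q(P^\nu)$, the corollary reduces to a trivial limit.

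Concretely, I would split into the three cases of $\chi_q(P)$ and substitute. In the inert case $\chi_q(P)=-1$, Lemma \ref{lem:size_of_Aq}(1) gives $\#\mathcal{A}_q(P^\nu)=|P|^\nu\bigl(1-\tfrac{1}{|P|+1}\bigr)+O(1)$ uniformly in $\nu$, so dividing by $|P|^\nu$ and sending $\nu\to\infty$ yields $1-\tfrac{1}{|P|+1}$. In the ramified case $\chi_q(P)=0$, Lemma \ref{lem:size_of_Aq}(2) gives $\#\mathcal{A}_q(P^\nu)=\tfrac{|P|^\nu+1}{2}$, whose quotient by $|P|^\nu$ tends to $\tfrac12$. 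In the split case $\chi_q(P)=1$, Lemma \ref{lem:size_of_Aq}(3) says $\#\mathcal{A}_q(P^\nu)=|P|^\nu$, giving the limit $1$ (which also recovers Corollary \ref{cor:deltah_chi_1} for $h=0$).

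There is no real obstacle here: the proof is purely a matter of combining the explicit formulas of Lemma \ref{lem:size_of_Aq} with the definition of $\delta_{q,0}$ and observing that the lower order terms vanish in the limit. The only thing worth flagging is the bookkeeping in the inert case, where the constant term in $\#\mathcal{A}_q(P^\nu)$ alternates with the parity of $\nu$; since both values are $O(1)$, this causes no issue after division by $|P|^\nu$.
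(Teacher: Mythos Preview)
Your proposal is correct and matches the paper's approach exactly: the paper simply states that Lemma~\ref{lem:size_of_Aq} immediately gives $\delta_{q,h}(P)$ in the special case $h=0$, which is precisely the limit computation you carry out.
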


The computation of $\delta_{q,h}(P)$ in the rest of the cases is more technical. For $h=(h_1,\ldots, h_k)$ let
\begin{eqnarray}
\label{eq:Delta} \Delta_h &=& \prod_{i\neq j}(h_i-h_j),\\
\label{eq:nu} \nu_h(P) &=& \max_{i\neq j}v_P(h_i-h_j).
\end{eqnarray}

\begin{lemma}\label{lem:Aestimate}
Let $P\in\mathbb{F}_q[T]$ be monic irreducible.
Let $h=(h_1,\dots,h_k)\in\mathbb{F}_q[T]^k$ be a $k$-tuple of pairwise distinct polynomials.
If $\chi_q(P)=-1$, then
$$
 |\mathcal{A}_{q,h}(P^\nu)|\geq|P|^\nu(1-\frac{k}{|P|+1})+k\cdot\begin{cases}\frac{|P|}{|P|+1},&\nu\mbox{ odd}\\\frac{1}{|P|+1},&\nu\mbox{ even}\end{cases}
$$
with equality if $P\nmid\Delta_{h}$.
\end{lemma}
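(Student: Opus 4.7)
The plan is to bound $\mathcal{A}_{q,h}(P^\nu)$ by taking complements and applying a union bound over the $k$ ``bad'' events $f+h_i\notin\mathcal{A}_q(P^\nu)$, then identifying exactly when the union bound is sharp.

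Concretely, set
\[
 \mathcal{B}_i = \{f\in\mathbb{F}_q[T]/(P^\nu) : f+h_i\notin\mathcal{A}_q(P^\nu)\}
\]
so that $\mathcal{A}_{q,h}(P^\nu) = \mathbb{F}_q[T]/(P^\nu)\setminus\bigcup_{i=1}^k\mathcal{B}_i$. Since translation by $h_i$ is a bijection, $|\mathcal{B}_i| = |P|^\nu - \#\mathcal{A}_q(P^\nu)$, which I would evaluate using Lemma~\ref{lem:size_of_Aq}(1). This immediately yields
\[
 \#\mathcal{A}_{q,h}(P^\nu) \;\geq\; |P|^\nu - \sum_{i=1}^k|\mathcal{B}_i| \;=\; |P|^\nu\Bigl(1-\tfrac{k}{|P|+1}\Bigr) + k\cdot c_\nu,
\]
where $c_\nu=\tfrac{|P|}{|P|+1}$ if $\nu$ is odd and $c_\nu=\tfrac{1}{|P|+1}$ if $\nu$ is even. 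This gives the desired inequality.

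For the equality assertion when $P\nmid\Delta_h$, I would show that the sets $\mathcal{B}_i$ are pairwise disjoint, so the union bound is in fact an equality. Using Lemma~\ref{lem:characterize}(1) (and noting separately that $0\in\mathcal{A}_q(P^\nu)$), an element $f+h_i$ lies in $\mathcal{B}_i$ only if, writing $f+h_i\equiv P^{\alpha_i}g_i\bmod P^\nu$ with $P\nmid g_i$, the exponent $\alpha_i$ is odd; in particular $\alpha_i\geq 1$, i.e.\ $P\mid f+h_i$. If $f\in\mathcal{B}_i\cap\mathcal{B}_j$ for $i\neq j$, then $P\mid(f+h_i)-(f+h_j)=h_i-h_j$, hence $P\mid\Delta_h$, contradicting the hypothesis. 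Therefore the $\mathcal{B}_i$ are disjoint and the lower bound is attained.

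The argument is essentially elementary once Lemmas~\ref{lem:characterize} and~\ref{lem:size_of_Aq} are in hand; the only thing to be careful about is the contribution of the zero class, which I would handle by noting that $0=0^2+T\cdot 0^2\in\mathcal{A}_q(P^\nu)$ so that the complement count $|P|^\nu - \#\mathcal{A}_q(P^\nu)$ correctly gives $|\mathcal{B}_i|$ without adjustment. No step here is really an obstacle; the only idea beyond bookkeeping is the observation that two distinct translates can simultaneously fail to be norms only when $P$ divides their difference, which is precisely the condition $P\mid\Delta_h$.
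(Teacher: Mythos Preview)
Your proof is correct and follows essentially the same approach as the paper: a union bound on the complements $\mathcal{B}_i$ via Lemma~\ref{lem:size_of_Aq}(1), together with the observation from Lemma~\ref{lem:characterize}(1) that $f\in\mathcal{B}_i$ forces $P\mid f+h_i$, so the $\mathcal{B}_i$ are disjoint when $P\nmid\Delta_h$. Your write-up is in fact slightly more explicit about the zero class than the paper's, but the argument is the same.
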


\begin{proof}
We use Lemma \ref{lem:size_of_Aq} and that 
$\mathcal{A}_{q,h}(P^\nu)=\bigcap_{i=1}^k\mathcal{A}_{q,h_i}(P^\nu)$
and $\#\mathcal{A}_{q,h_i}(P^\nu)=\#\mathcal{A}_{q}(P^\nu)$,
to get that 
$$
 \#\mathcal{A}_{q,h}(P^\nu)=|P|^\nu-\#(\mathbb{F}_q[T]/(P^\nu)\setminus\mathcal{A}_{q,h}(P^\nu))\geq|P|^\nu-k(|P|^\nu-\#\mathcal{A}_{q}(P^\nu)).
$$
If $P\nmid\Delta_h$, then at most one of $f+h_1,\dots,f+h_k$ is not in $\mathcal{A}_q(P^\nu)$ 
(by Lemma \ref{lem:characterize}(1)), 
hence the complements of the $\mathcal{A}_{q,h_i}(P^\nu)$ are disjoint.
\end{proof}

\begin{proposition}\label{lem:delta_chi_-1}
If $\chi_q(P)=-1$, then 
$$
 \delta_{q,h}(P)=1-\frac{\eta}{|P|+1}
$$
where $1\leq\eta\leq k$.
Moreover, $\eta=1$ if $h=0$, and $\eta=k$ if $P\nmid\Delta_h$.
\end{proposition}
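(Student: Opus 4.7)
The proposition has essentially no new content beyond what is already prepared by Corollary~\ref{lem:delta0} and Lemma~\ref{lem:Aestimate}: the substance is the two-sided bound $1\leq\eta\leq k$ and the identification of the extremal cases, and $\eta$ itself is just defined implicitly by the displayed equation.

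The plan is to read off the bounds directly from the two results cited above. First, for the upper bound on $\eta$, apply Lemma~\ref{lem:Aestimate}: dividing the inequality
\[
 \#\mathcal{A}_{q,h}(P^\nu)\geq|P|^\nu\left(1-\frac{k}{|P|+1}\right)+k\cdot\begin{cases}|P|/(|P|+1),&\nu\text{ odd}\\ 1/(|P|+1),&\nu\text{ even}\end{cases}
\]
by $|P|^\nu$ and sending $\nu\to\infty$ makes the bounded correction vanish, giving $\delta_{q,h}(P)\geq 1-k/(|P|+1)$, i.e.\ $\eta\leq k$. Moreover, if $P\nmid\Delta_h$, the same step applied with equality yields $\delta_{q,h}(P)=1-k/(|P|+1)$, so $\eta=k$ in that case.

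Next, for the lower bound on $\eta$, I would observe that translation by $-h_1$ gives a bijection between $\mathcal{A}_{q,(h_1)}(P^\nu)$ and $\mathcal{A}_q(P^\nu)$, and the defining inclusion
\[
\mathcal{A}_{q,h}(P^\nu)=\bigcap_{i=1}^k\mathcal{A}_{q,h_i}(P^\nu)\subseteq\mathcal{A}_{q,h_1}(P^\nu)
\]
yields $\delta_{q,h}(P)\leq\delta_{q,0}(P)$. Combined with Corollary~\ref{lem:delta0}, this gives $\delta_{q,h}(P)\leq 1-1/(|P|+1)$, i.e.\ $\eta\geq 1$. The case $h=0$ (meaning $k=1$ and $h=(0)$) is handled directly by Corollary~\ref{lem:delta0}, which gives $\delta_{q,0}(P)=1-1/(|P|+1)$ and hence $\eta=1$.

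There is no real obstacle here, since both bounds have been built into the preceding lemmas; the proof is a two-line assembly. The only thing to be slightly careful about is that $\eta$ is not asserted to be an integer, only a real number in $[1,k]$, so no further counting argument is required (which is consistent with the simple examples where, e.g., two $h_i$ coincide modulo $P$ and $\eta$ takes a non-integer value).
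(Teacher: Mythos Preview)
Your proposal is correct and follows essentially the same approach as the paper: the lower bound on $\delta_{q,h}(P)$ (equivalently, $\eta\leq k$) comes from Lemma~\ref{lem:Aestimate} after dividing by $|P|^\nu$ and letting $\nu\to\infty$, with equality when $P\nmid\Delta_h$; the upper bound (equivalently, $\eta\geq 1$) comes from comparing with $\delta_{q,0}(P)$ via Corollary~\ref{lem:delta0}. Your argument for the upper bound is in fact slightly more careful than the paper's, which writes ``Trivially, $\mathcal{A}_{q,h}(P^\nu)\subseteq\mathcal{A}_{q}(P^\nu)$'' --- a set inclusion that literally holds only if some $h_i=0$ --- whereas you correctly compare cardinalities after translating by $-h_1$.
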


\begin{proof}
%The case $h=0$ follows immediately from \ref{prop:deltaformula} by setting $\nu=1$ and applying \ref{lem:characterize}.
Trivially, $\mathcal{A}_{q,h}(P^\nu)\subseteq\mathcal{A}_{q}(P^\nu)$, hence $\delta_{q,h}(P)\leq\delta_{q,0}(P)$ is an upper bound,
and $\delta_{q,0}(P)=1-\frac{1}{|P|+1}$ by Corollary \ref{lem:delta0}.
The lower bound follows from Lemma \ref{lem:Aestimate}.
\end{proof}

We let
\[
 \tilde{\mathfrak{S}}_{q,k}:=\prod_{\chi_q(P)=-1}\frac{1-\frac{k}{|P|+1}}{\delta_{q,0}(P)^k}
\]
and observe that 
\begin{equation}
\label{eqn:S_tildeS}
\mathfrak{S}_{q,h} = \frac{\delta_{q,h}(T)}{\delta_{q,0}(T)^k}\cdot\prod_{P|\Delta_h,\chi_q(P)=-1}\frac{\delta_{q,h}(P)}{1-\frac{k}{|P|+1}}\cdot\tilde{\mathfrak{S}}_{q,k}
\end{equation}
by Proposition \ref{lem:delta_chi_-1} and Corollary \ref{cor:deltah_chi_1}.
So $\mathfrak{S}_{q,h}$ is convergent if and only if $\tilde{\mathfrak{S}}_{q,k}$ is convergent. 
%Thus, the following lemma implies in particular that the product defining $\mathfrak{S}_{q,h}$ converges:

\begin{lemma}\label{lem:productconverges}
For every $k$, the product defining $\tilde{\mathfrak{S}}_{q,k}$  converges to a positive constant,
and $\tilde{\mathfrak{S}}_{q,k} = 1 + O_k(q^{-1})$.
\end{lemma}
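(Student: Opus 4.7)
The plan is to take logarithms, Taylor expand each factor, and then estimate the resulting sum over primes via the prime polynomial theorem. First I would substitute the formula $\delta_{q,0}(P) = 1 - \frac{1}{|P|+1}$ from Corollary \ref{lem:delta0} into the definition of $\tilde{\mathfrak{S}}_{q,k}$. Writing $x = x(P) := 1/(|P|+1)$, the generic factor of the product becomes
\[
F(P) := \frac{1 - kx}{(1-x)^k}.
\]

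Next I would expand $\log F(P) = \log(1 - kx) - k\log(1 - x)$ as a Taylor series around $x = 0$. The coefficients of $x$ cancel, leaving
\[
\log F(P) \;=\; \sum_{j \geq 2} \frac{k - k^j}{j}\, x(P)^j,
\]
which, for $q \geq k$ so that $kx(P) < 1$ for every monic irreducible $P$ (since $|P| \geq q$), converges absolutely and satisfies $\log F(P) = O_k(|P|^{-2})$, with the implicit constant depending polynomially on $k$ through the explicit coefficients $\frac{k^j - k}{j}$.

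To bound the full log-sum I would apply the prime polynomial theorem: the number of monic irreducibles of degree $n$ in $\mathbb{F}_q[T]$ is at most $q^n/n$, so
\[
\sum_{P}|P|^{-2} \;\leq\; \sum_{n \geq 1} \frac{q^n}{n}\, q^{-2n} \;=\; -\log\!\left(1 - \tfrac{1}{q}\right) \;=\; \frac{1}{q} + O(q^{-2}).
\]
Since restricting to primes with $\chi_q(P) = -1$ only drops non-negative summands, combining these two estimates shows that $\sum_{\chi_q(P) = -1} \log F(P)$ converges absolutely with value $O_k(q^{-1})$. Exponentiating yields both convergence of the product and $\tilde{\mathfrak{S}}_{q,k} = \exp(O_k(q^{-1})) = 1 + O_k(q^{-1})$; in particular the limit is positive for all $q$ sufficiently large in terms of $k$.

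There is no real obstacle here beyond bookkeeping; the main point to check is that the implicit constants coming from the Taylor expansion are uniform in $P$, which is immediate from the explicit form of the coefficients. The small-$q$ regime (namely $q < k$, if one wishes positivity for all $q$) can be treated by absorbing the finitely many factors with $|P| \leq k$ into the constant, since each is finite and the denominators $\delta_{q,0}(P)^k$ remain positive by Corollary \ref{lem:delta0}.
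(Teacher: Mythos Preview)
Your approach is essentially the same as the paper's: take logarithms, Taylor-expand to see the linear terms cancel so that each $\log F(P)$ is $O_k(|P|^{-2})$, sum over primes to get $O_k(q^{-1})$, and exponentiate. The paper uses the cruder count $\#\{P:\deg P=d\}\leq q^d$ rather than your $q^d/d$, and it records the sign $a_P\geq 0$ via Bernoulli's inequality before summing, but these are cosmetic differences.

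One genuine wrinkle: your final paragraph on the small-$q$ regime does not actually secure positivity. When $|P|+1\leq k$ the numerator $1-k/(|P|+1)$ is $\leq 0$, so some factors of $\tilde{\mathfrak{S}}_{q,k}$ may vanish or be negative; absorbing finitely many such factors ``into the constant'' yields convergence but not a positive limit. (The paper's own proof glosses over this too --- Bernoulli's inequality is invoked for all $P$, but $a_P$ is only defined when $1-k/(|P|+1)>0$. In practice the lemma is only applied for $q$ large relative to $k$, where the issue does not arise, and for such $q$ your argument is complete.)
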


\begin{proof}
For each prime polynomial $P$ put 
\[
a_P = k \log (1-\frac{1}{|P|+1})-\log(1-\frac{k}{|P|+1}).
\] 
By Bernoulli's inequality we have that $a_P\geq 0$ for all $P$, hence the series 
\[
S_q := \sum_{\chi_q(P)=-1} a_P
\]
in convergent if and only if it is bounded. 
Using 
%the Vinogradov notation $f(x)\ll g(x)$ and 
the Taylor expansion $-\log (1-x)=x+O(x^2)$, we have that 
\begin{equation*}\label{eq:a_P}
a_P \ll |P|^{-2} = q^{-2\deg P}.
\end{equation*}
Since the number of $P$ of degree $d$ with $\chi_q(P)=-1$ is trivially no more than $q^d$, this gives 
\[
S_q  \ll \sum_{d=1}^\infty q^d q^{-2d} \ll \frac{1}{q} .
\]
Thus the series is convergent and $S_q=O(1/q)$.
Now, as $\tilde{\mathfrak{S}}_{q,k}=\exp(-S_q)$,
the product $\tilde{\mathfrak{S}}_{q,k}$ converges and $\tilde{\mathfrak{S}}_{q,k}=1+O(1/q)$.
\end{proof}

From Lemma~\ref{lem:productconverges} and \eqref{eqn:S_tildeS} the following assertion immediately follows:

\begin{corollary}\label{cor:convergence}
Let $h=(h_1,\ldots, h_k) \in \FF_q[T]^k$ be a $k$-tuple of pairwise distinct polynomials and let $\Delta_h$ as defined in \eqref{eq:Delta}. Then $\mathfrak{S}_{q,h}$ converges. Moreover $\mathfrak{S}_{q,h}=0$ if and only if there exists $P \mid T \Delta_h(T)$ with $\chi_q(P)\neq 1$ such that  $\delta_{q,h}(P)=0$.
\end{corollary}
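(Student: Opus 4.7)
The plan is to read both assertions directly off the decomposition \eqref{eqn:S_tildeS}, which writes $\mathfrak{S}_{q,h}$ as the product of three pieces: the local factor $\delta_{q,h}(T)/\delta_{q,0}(T)^k$ at $P=T$ (the unique prime with $\chi_q(P)=0$), the finite piece $\prod_{P\mid\Delta_h,\,\chi_q(P)=-1}\delta_{q,h}(P)/(1-k/(|P|+1))$, and the infinite ``main'' product $\tilde{\mathfrak{S}}_{q,k}$. By Corollary \ref{cor:deltah_chi_1}, primes $P$ with $\chi_q(P)=1$ contribute trivial factors $1$, so this decomposition accounts for every prime appearing in \eqref{eqn:def_Sqh}.

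For the convergence claim, I would simply cite Lemma \ref{lem:productconverges}: the infinite piece $\tilde{\mathfrak{S}}_{q,k}$ converges to a strictly positive constant. The other two pieces involve only finitely many primes (those dividing $T\Delta_h$ with $\chi_q(P)\neq 1$) and each factor is a ratio of nonnegative reals, with denominators $\delta_{q,0}(T)^k=2^{-k}$ (Corollary \ref{lem:delta0}) and $1-k/(|P|+1)$ strictly positive whenever the formula from Proposition \ref{lem:delta_chi_-1} is applicable. Hence $\mathfrak{S}_{q,h}$ is a well-defined nonnegative real number.

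For the vanishing criterion, positivity of $\tilde{\mathfrak{S}}_{q,k}$ means that $\mathfrak{S}_{q,h}=0$ can arise only from a zero factor in one of the two finite pieces; and since the denominators there are strictly positive, such a zero is equivalent to $\delta_{q,h}(T)=0$ or $\delta_{q,h}(P)=0$ for some $P\mid\Delta_h$ with $\chi_q(P)=-1$. Combining with Corollary \ref{cor:deltah_chi_1}, which rules out $\chi_q(P)=1$ as a source of vanishing, this is precisely the stated equivalence in terms of primes $P\mid T\Delta_h$ with $\chi_q(P)\neq 1$. There is no real obstacle: the corollary is labeled as following immediately from \eqref{eqn:S_tildeS} and Lemma \ref{lem:productconverges}, and the only care required is the routine bookkeeping to collect the finitely many exceptional primes into the finite piece and verify that the denominators there are positive.
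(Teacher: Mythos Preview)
Your proposal is correct and follows exactly the paper's approach: the paper's proof consists of the single sentence that the corollary follows immediately from Lemma~\ref{lem:productconverges} and \eqref{eqn:S_tildeS}, and you have spelled out precisely that deduction. The only additional bookkeeping you do---checking that the finitely many denominators in the finite piece of \eqref{eqn:S_tildeS} are positive so that vanishing can come only from the numerators $\delta_{q,h}(P)$---is the natural way to make the ``immediately follows'' explicit.
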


Next we give convenient formulas for $\mathfrak{S}_{q,h}$.

\begin{lemma}
\label{lem:lift}
Let $P\in\mathbb{F}_q[T]$ monic irreducible and $f\in\mathcal{A}_{q}(P^\nu)$.
\begin{enumerate}
\item Assume $\chi_q(P)=-1$.
\begin{enumerate}
\item If $f\not\equiv 0\mod P^\nu$ or $\nu$ is even, then $f\in\mathcal{A}_{q}(P^{\nu+1})$.
\item If $f\equiv 0\mod P^\nu$ and $\nu$ is odd, then $f\in\mathcal{A}_{q}(P^{\nu+1})$ if and only if
$f\equiv 0\mod P^{\nu+1}$.
\end{enumerate}
\item Assume $\chi_q(P)=0$.
\begin{enumerate}
\item If $f\not\equiv 0\mod P^\nu$, then $f\in\mathcal{A}_{q}(P^{\nu+1})$.
\item If $f\equiv 0\mod P^\nu$, then $f\equiv\alpha P^\nu\mod P^{\nu+1}$ with $\alpha\in\mathbb{F}_q$,
and $f\in\mathcal{A}_{q}(P^{\nu+1})$ if and only if $\alpha=0$ or $\alpha\in\mathbb{F}_q^{\times2}$.
\end{enumerate}
\end{enumerate}
\end{lemma}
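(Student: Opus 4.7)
The plan is to reduce each claim to a direct application of Lemma \ref{lem:characterize}, plus a short bookkeeping step that tracks how the $P$-adic representation of $f$ changes when the modulus is refined from $P^{\nu}$ to $P^{\nu+1}$. The bridging observation is this: if $f\equiv P^{\alpha}g\mod P^{\nu}$ with $0\leq\alpha<\nu$ and $P\nmid g$, then any lift of $f$ to $\mathbb{F}_q[T]/(P^{\nu+1})$ has the form $f\equiv P^{\alpha}g'\mod P^{\nu+1}$ with $g'\equiv g\mod P$; in particular $P\nmid g'$, the exponent $\alpha$ is unchanged, and when $P=T$ one also has $\chi_q(g')=\chi_q(g)$. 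The complementary situation $f\equiv 0\mod P^{\nu}$ is handled separately by writing $f\equiv\beta P^{\nu}\mod P^{\nu+1}$ with $\deg\beta<\deg P$: either $\beta=0$ and $f\equiv 0\mod P^{\nu+1}$ lies trivially in $\mathcal{A}_q(P^{\nu+1})$ (it is represented by $0^2+T\cdot 0^2$), or $\beta\neq 0$, $P\nmid\beta$, and Lemma \ref{lem:characterize} applies with $g=\beta$ and exponent $\nu$.

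With this dichotomy in place, each case of the statement is immediate. For (1) with $\chi_q(P)=-1$: in (a) with $f\not\equiv 0\mod P^{\nu}$, the bridge preserves the even exponent $\alpha$, so Lemma \ref{lem:characterize}(1) gives $f\in\mathcal{A}_q(P^{\nu+1})$; in (a) with $f\equiv 0\mod P^{\nu}$ and $\nu$ even, either $\beta=0$ or the new exponent $\alpha=\nu$ is even, so again $f\in\mathcal{A}_q(P^{\nu+1})$. For (1)(b), $\nu$ is odd, so $\beta\neq 0$ would produce an odd $\alpha=\nu$ and Lemma \ref{lem:characterize}(1) excludes $f$ from $\mathcal{A}_q(P^{\nu+1})$; the only admissible lift is $\beta=0$, i.e.\ $f\equiv 0\mod P^{\nu+1}$. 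Case (2) runs in parallel using Lemma \ref{lem:characterize}(2), noting that $\chi_q(P)=0$ forces $P=T$ and hence $\beta\in\mathbb{F}_q$: (2)(a) follows from invariance of $\chi_q(g)$ under the lift, and (2)(b) from Lemma \ref{lem:characterize}(2) applied to $f\equiv\alpha P^{\nu}\mod P^{\nu+1}$, which distinguishes $\alpha=0$, $\alpha\in\mathbb{F}_q^{\times 2}$, and $\alpha\in\mathbb{F}_q^{\times}\setminus\mathbb{F}_q^{\times 2}$.

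There is no genuine obstacle: the whole lemma is pure bookkeeping on top of Lemma \ref{lem:characterize}. The only point that demands attention is that Lemma \ref{lem:characterize} explicitly assumes $f\not\equiv 0\mod P^{\nu}$, while Lemma \ref{lem:lift} contemplates the case $f\equiv 0\mod P^{\nu}$ throughout; the short $\beta$-decomposition above is precisely what is needed to treat these subcases uniformly, and it is what drives the curious asymmetry between odd and even $\nu$ in (1)(a)–(1)(b) as well as the condition $\alpha\in\mathbb{F}_q^{\times 2}$ in (2)(b).
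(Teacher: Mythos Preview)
Your proof is correct and follows exactly the approach the paper intends: the paper's own proof consists of the single sentence ``This is immediate from Lemma \ref{lem:characterize},'' and you have simply unpacked that sentence case by case. The bookkeeping device of writing $f\equiv\beta P^{\nu}\bmod P^{\nu+1}$ in the $f\equiv 0\bmod P^{\nu}$ subcase is precisely what is implicit in the paper's one-line justification.
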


\begin{proof}
This is immediate from Lemma \ref{lem:characterize}.
\end{proof}

We define
\begin{equation}\label{eq:def_Omegas}
\begin{split}
 \Omega_{q,h}(P^\nu) &= \{ f\in\mathcal{A}_{q,h}(P^\nu) : \exists i\; f+h_i \equiv 0\mod P^\nu\},\\
 \Omega_{q,h}^*(P^\nu) &= \{ f\in\mathcal{A}_{q,h}(P^\nu) : \forall i\; f+h_i \not\equiv 0\mod P^\nu\}.
\end{split}
\end{equation}
Note that if $\nu>\nu_h(P)$, then $h_1+(P^\nu),\dots,h_k+(P^\nu)$ are pairwise distinct,
so $f+h_i\equiv 0\mod P^\nu$ for at most one $i$, and
\begin{equation}\label{eq:formula_Omega}
 \#\Omega_{q,h}(P^\nu) = \#\{i : h_j-h_i\in\mathcal{A}_q(P^\nu) \mbox{ for all }j\}.
\end{equation}

\begin{lemma}\label{lem:liftwithh}
Let $P\in\mathbb{F}_q[T]$ be monic irreducible and $f\in\mathcal{A}_{q}(P^\nu)$, let $h=(h_1,\ldots, h_k)\in \FF_q[T]^k$ be a $k$-tuple of pairwise distinct polynomials, and $\nu_h$ as defined in \eqref{eq:nu}.
\begin{enumerate}
\item \label{eq:liftwithh1}
Assume $\chi_q(P)=-1$. 
\begin{enumerate} 
\item If $f\in\Omega_{q,h}^*(P^\nu)$, then $f\in\Omega_{q,h}^*(P^\xi)$ for all $\xi\geq\nu$.
\item 
If $f\in\Omega_{q,h}(P^\nu)$ and $\nu>\nu_h(P)$, then $f+gP^\nu\in\Omega_{q,h}(P^{\nu+1})$ for precisely one $g\in\mathbb{F}_q[T]$ 
with ${\rm deg}(g)<{\rm deg}(P)$. For all $g'\neq g$ with ${\rm deg}(g')<{\rm deg}(P)$,
$f+g'P^\nu\in\Omega_{q,h}^*(P^{\nu+1})$ if $\nu$ is even,
and $f+g'P^\nu\notin\mathcal{A}_{q,h}(P^{\nu+1})$ if $\nu$ is odd.
\end{enumerate}
\item  \label{eq:liftwithh2}
Assume $\chi_q(P)=0$.
\begin{enumerate}
\item If $f\in\Omega_{q,h}^*(P^\nu)$, then $f\in\Omega_{q,h}^*(P^\xi)$ for all $\xi\geq\nu$.
\item If $f\in\Omega_{q,h}(P^\nu)$ and $\nu>\nu_h(P)$, then $f+\alpha P^\nu\in\Omega_{q,h}(P^{\nu+1})$ for one $\alpha\in\mathbb{F}_q$,
$f+\alpha P^\nu\in\Omega_{q,h}^*(P^{\nu+1})$ for $\frac{q-1}{2}$ many $\alpha\in\mathbb{F}_q$,
and $f+\alpha P^\nu\notin\mathcal{A}_{q,h}(P^{\nu+1})$ for $\frac{q-1}{2}$ many $\alpha\in\mathbb{F}_q$.
\end{enumerate}
\end{enumerate}
\end{lemma}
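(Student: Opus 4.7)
The plan is to handle each of the four sub-claims by tracking, for a fixed lift $\tilde f \in \mathbb{F}_q[T]$ of $f$, the $P$-adic valuation $v_P(\tilde f + h_i)$ together with the leading coefficient of $(\tilde f + h_i)/P^{v_P(\tilde f+h_i)}$ modulo $P$. The only inputs will be Lemma~\ref{lem:characterize}, which detects membership in $\mathcal{A}_q(P^\mu)$ from these two data, and Lemma~\ref{lem:lift}, which controls the single step from $P^\nu$ to $P^{\nu+1}$.

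For parts (1a) and (2a), I would first note that if $f \in \Omega_{q,h}^*(P^\nu)$, then by definition $\alpha_i := v_P(\tilde f + h_i) < \nu$ for every $i$; hence both $\alpha_i$ and the leading coefficient $g_i$ of $(\tilde f + h_i)/P^{\alpha_i}$ modulo $P$ are insensitive to whether one views $\tilde f + h_i$ modulo $P^\nu$ or modulo $P^\xi$ for any $\xi \geq \nu$. Since the criterion of Lemma~\ref{lem:characterize} (parity of $\alpha_i$ when $\chi_q(P)=-1$; $\chi_q(g_i)=1$ when $\chi_q(P)=0$) depends only on these invariants, one deduces $\tilde f + h_i \in \mathcal{A}_q(P^\xi)$, and combining with $\alpha_i < \xi$ gives $f \in \Omega_{q,h}^*(P^\xi)$.

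For parts (1b) and (2b), I would begin by using $\nu > \nu_h(P)$ to force the index $i$ with $\tilde f + h_i \equiv 0 \mod P^\nu$ to be unique: two such indices $i \neq j$ would give $v_P(h_i - h_j) \geq \nu > \nu_h(P)$, contradicting the definition of $\nu_h(P)$. For every $j \neq i$ the valuation $v_P(\tilde f + h_j) = v_P(h_j - h_i) < \nu$ and its leading coefficient are unaffected by the perturbation $\tilde f \mapsto \tilde f + gP^\nu$, so $\tilde f + gP^\nu + h_j \in \mathcal{A}_q(P^{\nu+1})$ for every $g$; the whole question therefore collapses to the single index $i$.

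Writing $\tilde f + h_i \equiv \beta P^\nu \mod P^{\nu+1}$ with $\deg \beta < \deg P$, one has $\tilde f + gP^\nu + h_i \equiv (\beta + g) P^\nu \mod P^{\nu+1}$. The unique $g \equiv -\beta \mod P$ gives $\tilde f + gP^\nu + h_i \equiv 0 \mod P^{\nu+1}$ and therefore $f + gP^\nu \in \Omega_{q,h}(P^{\nu+1})$. For any other $g'$ one has $v_P(\tilde f + g'P^\nu + h_i) = \nu$ with nonzero leading coefficient $\beta + g'$, and Lemma~\ref{lem:characterize} (equivalently Lemma~\ref{lem:lift}) applied at level $\nu+1$ then produces the claimed dichotomy: in case (1) membership in $\mathcal{A}_q(P^{\nu+1})$ is decided by the parity of $\nu$, while in case (2) (where $P=T$, so $\beta + g' \in \mathbb{F}_q^\times$) it is decided by whether $\beta + g'$ is a nonzero square, which splits the $q-1$ remaining values of $g'$ evenly. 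I expect the only real obstacle to be clerical: keeping the various lifts straight and applying $\nu > \nu_h(P)$ at precisely the right moment to isolate the unique index $i$ at which the $P$-adic behaviour of $\tilde f + h_i$ can shift under the passage from $P^\nu$ to $P^{\nu+1}$.
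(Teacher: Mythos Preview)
Your argument is correct and is precisely the approach the paper takes: its proof is the single line ``This follows by applying Lemma~\ref{lem:lift} to the $f+h_i$,'' and what you have written is a careful unpacking of exactly that, using Lemma~\ref{lem:characterize} to track $v_P(\tilde f+h_i)$ and the leading coefficient under the passage $P^\nu\to P^{\nu+1}$. The uniqueness of the index $i$ from $\nu>\nu_h(P)$ is also noted in the paper just before the lemma, so there is no divergence in method.
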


\begin{proof}
This follows by applying Lemma \ref{lem:lift} to the $f+h_i$.
\end{proof}

\begin{proposition}\label{prop:deltaformula}
Let $P\in\mathbb{F}_q[T]$ be monic irreducible, let $h=(h_1,\dots,h_k)\in\mathbb{F}_q[T]^k$ be a $k$-tuple of   pairwise distinct polynomials, and $\nu_h$ as in \eqref{eq:nu}.
Fix $\nu>\nu_h(P)$.
\begin{enumerate}
\item 
If $\chi_q(P)=-1$, then 
$$
 \delta_{q,h}(P)=\begin{cases}
  |P|^{-\nu}(\#\Omega_{q,h}^*(P^\nu)+\frac{1}{|P|+1}\#\Omega_{q,h}(P^\nu)), &\nu\mbox{ odd}\\
  |P|^{-\nu}(\#\Omega_{q,h}^*(P^\nu)+\frac{|P|}{|P|+1}\#\Omega_{q,h}(P^\nu)), &\nu\mbox{ even}
  \end{cases}
$$
\item \label{eq:deltaformulaT}
If $\chi_q(P)=0$, then
$$
 \delta_{q,h}(P)= |P|^{-\nu}(\#\Omega_{q,h}^*(P^\nu)+\frac{1}{2}\#\Omega_{q,h}(P^\nu)). 
$$
%\item If $\chi_q(P)=+1$, then $\delta_{q,h}(P)=1$.
\end{enumerate}
\end{proposition}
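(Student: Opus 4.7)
The strategy is to pass to the definition
\[
\delta_{q,h}(P) = \lim_{m\to\infty}|P|^{-(\nu+m)}\#\mathcal{A}_{q,h}(P^{\nu+m})
\]
and to bucket the elements counted at level $\nu+m$ according to their reduction modulo $P^\nu$. Using the partition $\mathcal{A}_{q,h}(P^\nu) = \Omega_{q,h}(P^\nu)\sqcup\Omega_{q,h}^*(P^\nu)$ and letting $N_m(f)$ denote the number of lifts of $f\in\mathcal{A}_{q,h}(P^\nu)$ to $\mathcal{A}_{q,h}(P^{\nu+m})$, it suffices to determine $L(f) := \lim_{m\to\infty}|P|^{-m}N_m(f)$ for each $f$ in each stratum; then
\[
\delta_{q,h}(P) = |P|^{-\nu}\bigl(\#\Omega_{q,h}^*(P^\nu)\cdot L_* + \#\Omega_{q,h}(P^\nu)\cdot L_\Omega\bigr),
\]
where $L_*$ and $L_\Omega$ are the common values of $L$ on the two strata; Lemma~\ref{lem:liftwithh} will show these depend only on the stratum, not on $f$, precisely because $\nu>\nu_h(P)$.

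Lemma~\ref{lem:liftwithh}(1)(a) and (2)(a) immediately give $L_* = 1$: if $f\in\Omega_{q,h}^*(P^\nu)$, then all $|P|^m$ lifts stay in $\Omega_{q,h}^*\subseteq\mathcal{A}_{q,h}$. For $f\in\Omega_{q,h}(P^\nu)$, the lifting rules of Lemma~\ref{lem:liftwithh}(1)(b) resp.~(2)(b) yield a simple linear recurrence for the numbers $(a_m,b_m)$ of lifts at level $\nu+m$ lying in $\Omega_{q,h}$ and $\Omega_{q,h}^*$ respectively, initialised at $a_0=1$, $b_0=0$: every $\Omega_{q,h}$-element produces exactly one $\Omega_{q,h}$-lift, so $a_m\equiv 1$, plus a controlled number of $\Omega_{q,h}^*$-lifts, the remaining lifts being killed when appropriate. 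In case (2), where $\chi_q(P)=0$ forces $P=T$, each step contributes $(q-1)/2$ many $\Omega_{q,h}^*$-lifts and kills $(q-1)/2$ more, giving $b_m = (|P|^m-1)/2$ and hence $L_\Omega = 1/2$, matching the claim.

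In case (1) the recurrence alternates with the parity of the current level $\nu+m$: from an odd level the $|P|-1$ non-$\Omega_{q,h}$ lifts of an $\Omega_{q,h}$-element leave $\mathcal{A}_{q,h}$ entirely, while from an even level they all land in $\Omega_{q,h}^*$. Summing the resulting partial geometric series yields a closed form for $b_m$, and by separating $m$ even from $m$ odd one verifies that both subsequential limits of $|P|^{-m}(a_m+b_m)$ coincide, namely $1/(|P|+1)$ when $\nu$ is odd and $|P|/(|P|+1)$ when $\nu$ is even. Plugging these values of $L_*$ and $L_\Omega$ into the decomposition above gives exactly the formulas stated in~(1). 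The main obstacle is the parity bookkeeping in case (1): one must correctly track which transition rule applies at each step and confirm that the even-$m$ and odd-$m$ subsequences agree; the conceptual work has already been packaged into Lemma~\ref{lem:liftwithh}, so what remains is essentially a mechanical recursion.
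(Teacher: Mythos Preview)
Your proposal is correct and follows essentially the same route as the paper's proof: both use Lemma~\ref{lem:liftwithh} to set up and solve a linear recurrence for the number of $\Omega_{q,h}^*$-elements at level $\xi\geq\nu$, with $\#\Omega_{q,h}(P^\xi)$ held constant. The only cosmetic difference is that the paper tracks the global quantities $\#\Omega_{q,h}^*(P^\xi)$ and $\#\Omega_{q,h}(P^\xi)$ directly, whereas you bucket by reduction modulo $P^\nu$ and compute per-element lifting densities $L_*,L_\Omega$ before summing; summing your $(a_m,b_m)$ over the strata recovers the paper's recursion verbatim.
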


\begin{proof}
(1): By Lemma \ref{lem:liftwithh}\eqref{eq:liftwithh1}, for each $\xi\geq\nu$, we have $\#\Omega_{q,h}(P^\xi)=\#\Omega_{q,h}(P^\nu)$ and
$$
 \#\Omega_{q,h}^*(P^{\xi+1}) = |P|\cdot\#\Omega_{q,h}^*(P^\xi) + \begin{cases}(|P|-1)\#\Omega_{q,h}(P^\xi)&\xi\mbox{ even}\\0&\xi\mbox{ odd}\end{cases}
$$
so, as $\xi\rightarrow\infty$,  $|P|^{-\xi}\#\Omega_{q,h}^*(P^\xi)$ tends to
$$
  |P|^{-\nu}\#\Omega_{q,h}^*(P^\nu)+|P|^{-\nu}\#\Omega_{q,h}(P^\nu)\cdot(|P|-1)\cdot
 \begin{cases}
  \sum_{\mu=0}^\infty|P|^{-(2\mu+1)}&\nu\mbox{ even}\\\sum_{\mu=1}^\infty|P|^{-2\mu}&\nu\mbox{ odd}\end{cases},
$$
from which the claim follows, as $|P|^{-\xi}(\#\mathcal{A}_{q,h}(P^\xi)-\#\Omega^*_{q,h}(P^\xi))=|P|^{-\xi}\#\Omega_{q,h}(P^\nu)\rightarrow 0$,
since $\#\Omega_{q,h}(P^\nu)\leq k$.

(2): By Lemma \ref{lem:liftwithh}\eqref{eq:liftwithh2}, for each $\xi\geq\nu$, we have $\#\Omega_{q,h}(P^\xi)=\#\Omega_{q,h}(P^\nu)$ and
$$
 \#\Omega_{q,h}^*(P^{\xi+1}) = |P|\cdot\#\Omega_{q,h}^*(P^\xi) + \frac{|P|-1}{2}\cdot\#\Omega_{q,h}(P^\xi),
$$
so, as $\xi\rightarrow\infty$,  $|P|^{-\xi}\#\Omega_{q,h}^*(P^\xi)$ tends to
$$
 |P|^{-\nu}\#\Omega_{q,h}^*(P^\nu)+|P|^{-\nu}\#\Omega_{q,h}(P^\nu)\cdot\frac{|P|-1}{2}\cdot\sum_{\mu=1}^\infty|P|^{-\mu},
$$
from which again the claim follows.
%(3): This follows trivially from Lemma \ref{lem:characterize}(3).
\end{proof}

Note the striking similarity between Proposition~\ref{prop:deltaformula}(\ref{eq:deltaformulaT}) and the formula for $\delta_{\mathbf{k}}(2)$ in \cite{FKR}.
As an immediate consequence of Corollary~\ref{lem:delta0} and  Proposition~\ref{prop:deltaformula} we have 
\begin{corollary}\label{cor:localobstruction}
There exists local obstruction at $P$ if and only if $\delta_{q,h}(P)=0$.  
In particular, $\mathfrak{S}_{q,h}=0$ if and only if there exists local obstruction at some prime $P$.
\end{corollary}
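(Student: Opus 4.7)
The plan is to establish the local equivalence ``there is a local obstruction at $P$ $\iff$ $\delta_{q,h}(P)=0$'' prime-by-prime, and then derive the global ``in particular'' clause from the factorization \eqref{eqn:S_tildeS}.

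One direction of the local equivalence is immediate from the monotonicity observed just before the definition of $\delta_{q,h}(P)$: if $\mathcal{A}_{q,h}(P^{\nu_0})=\emptyset$ for some $\nu_0>0$, then reduction mod $P^{\nu_0}$ forces $\mathcal{A}_{q,h}(P^\nu)=\emptyset$ for every $\nu\geq\nu_0$, so the limit $\delta_{q,h}(P)$ vanishes. For the converse, assuming $\mathcal{A}_{q,h}(P^\nu)\neq\emptyset$ for all $\nu>0$, I would show $\delta_{q,h}(P)>0$. The case $\chi_q(P)=1$ is free from Corollary~\ref{cor:deltah_chi_1}. For $\chi_q(P)\in\{-1,0\}$, I would fix any $\nu>\nu_h(P)$; then the definitions in \eqref{eq:def_Omegas} give the disjoint decomposition $\mathcal{A}_{q,h}(P^\nu)=\Omega_{q,h}^*(P^\nu)\sqcup\Omega_{q,h}(P^\nu)$, so by hypothesis at least one of these two cardinalities is positive. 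Proposition~\ref{prop:deltaformula} then writes $\delta_{q,h}(P)$ as a linear combination of $\#\Omega_{q,h}^*(P^\nu)$ and $\#\Omega_{q,h}(P^\nu)$ all of whose coefficients ($|P|^{-\nu}$, $1/(|P|+1)$, $|P|/(|P|+1)$ and $1/2$) are strictly positive, yielding $\delta_{q,h}(P)>0$.

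For the global statement, I would invoke the factorization \eqref{eqn:S_tildeS}, which expresses $\mathfrak{S}_{q,h}$ as a finite product of local factors indexed by $T$ and by the primes $P\mid\Delta_h$ with $\chi_q(P)=-1$, multiplied by the strictly positive convergent constant $\tilde{\mathfrak{S}}_{q,k}$ provided by Lemma~\ref{lem:productconverges}. The denominators appearing in this factorization are positive by Corollary~\ref{lem:delta0}, and Corollary~\ref{cor:deltah_chi_1} together with Proposition~\ref{lem:delta_chi_-1} (whose conclusion $\delta_{q,h}(P)=1-k/(|P|+1)$ for $P\nmid\Delta_h$ with $\chi_q(P)=-1$ is manifestly positive in the regime controlled by $\tilde{\mathfrak{S}}_{q,k}>0$) rules out local obstructions at all primes outside this finite list. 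Hence $\mathfrak{S}_{q,h}=0$ is equivalent to the vanishing of some $\delta_{q,h}(P)$ in the explicit finite product, which by the local equivalence just proved is equivalent to the existence of a local obstruction at some prime.

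The only non-trivial step is the converse direction of the local equivalence, which reduces to reading off the positivity of the coefficients in Proposition~\ref{prop:deltaformula} once the $\Omega$/$\Omega^*$ decomposition is invoked; the rest is routine manipulation of the already-proved product formula.
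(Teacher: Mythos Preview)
Your proof is correct and follows essentially the same route as the paper, which simply records the corollary as an immediate consequence of Corollary~\ref{lem:delta0} and Proposition~\ref{prop:deltaformula}: the key point in both is that the formula in Proposition~\ref{prop:deltaformula} expresses $\delta_{q,h}(P)$ as a strictly positive linear combination of $\#\Omega_{q,h}^*(P^\nu)$ and $\#\Omega_{q,h}(P^\nu)$, so $\delta_{q,h}(P)=0$ exactly when $\mathcal{A}_{q,h}(P^\nu)=\emptyset$. One minor simplification for the global direction: once you have the local equivalence, the implication ``local obstruction at some $P$ $\Rightarrow$ $\mathfrak{S}_{q,h}=0$'' follows directly from the definition of $\mathfrak{S}_{q,h}$ as a product with a vanishing nonnegative factor, so you do not need to argue that obstructions only occur at primes in the finite list.
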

We now give explicit formulas for the case $k=2$; i.e.~$h=(h_1,h_2)$.
In this case, we may assume that $h$ is of the form $h=(0,h_1)$.
%\begin{lemma}
%If $h_1,\dots,h_k$ are pairwise distinct, then 
%$\#\Omega_{q,h}(P^\nu)=k$ for all primes $P\nmid\Delta_h$ and all $\nu$.
%\end{lemma}

\begin{proposition}\label{prop:Sfor0h}
For $h=(0,h_1)$ and $P\in\mathbb{F}_q[T]$ monic irreducible with $v_P(h_1)=\rho$ we have
\begin{eqnarray*}
 \delta_{q,h}(P) = \begin{cases} 
  1-\frac{|P|^\rho+1}{|P|^\rho(|P|+1)},&\chi_q(P)=-1\\
  \frac{1}{2}-\frac{1}{4|P|^\rho}-\frac{1}{4|P|^{\rho+1}},&\chi_q(P)=0\\
  1,&\chi_q(P)=1.\\
\end{cases}
\end{eqnarray*}
In particular, $\delta_{q,h}(P) >0$ for all $P$,
so there exists no local obstruction in the case $k=2$.
\end{proposition}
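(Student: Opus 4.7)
The plan is to split into three cases according to $\chi_q(P)$. The case $\chi_q(P)=1$ is immediate from Corollary~\ref{cor:deltah_chi_1}. For $\chi_q(P)\in\{-1,0\}$ I would apply Proposition~\ref{prop:deltaformula} with $\nu=\rho+1$, which is admissible since for the $2$-tuple $h=(0,h_1)$ one has $\nu_h(P)=v_P(h_1)=\rho$. The task then reduces to computing $\#\Omega_{q,h}(P^{\rho+1})$ and $\#\Omega^*_{q,h}(P^{\rho+1})$ and combining them with the parity-dependent coefficients in the proposition.

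For $\#\Omega_{q,h}(P^{\rho+1})$, formula \eqref{eq:formula_Omega} reduces the question to asking whether $\pm h_1\in\mathcal{A}_q(P^{\rho+1})$; by Lemma~\ref{lem:characterize} this is equivalent to $\rho$ being even when $\chi_q(P)=-1$ (yielding $\#\Omega\in\{0,2\}$), and to $\pm c$ being squares in $\mathbb{F}_q$ when $\chi_q(P)=0$, where $c\in\mathbb{F}_q^\times$ is the coefficient of $T^\rho$ in $h_1$. For $\#\Omega^*_{q,h}(P^{\rho+1})$ I would partition on $\alpha=v_P(f)<\rho+1$. The range $\alpha<\rho$ forces $v_P(f+h_1)=\alpha$, so membership in $\mathcal{A}_{q,h}$ collapses to a single parity (respectively quadratic-residue) condition on the leading digit of $f$ via Lemma~\ref{lem:characterize}, producing an elementary geometric sum. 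The range $\alpha=\rho$ is where the two shifts genuinely interact: writing $f\equiv a_\rho P^\rho\pmod{P^{\rho+1}}$ one has $f+h_1\equiv(a_\rho+c)P^\rho\pmod{P^{\rho+1}}$, and after discarding $a_\rho=-c$ (which places $f$ in $\Omega$ rather than $\Omega^*$) the remaining count follows directly from Lemma~\ref{lem:characterize}. In the $\chi_q(P)=0$ case this last step requires $\#\{a\in\mathbb{F}_q^\times\setminus\{-c\}:\chi_q(a)=\chi_q(a+c)=1\}$, which I would evaluate from the standard identity $\sum_{a\in\mathbb{F}_q}\chi_q(a)\chi_q(a+c)=-1$ for $c\neq 0$.

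The main obstacle is the $\chi_q(P)=0$ case: the intermediate counts for $\#\Omega_{q,h}$ and $\#\Omega^*_{q,h}$ each depend on $\chi_q(c)$ and $\chi_q(-c)$, while the target formula does not. The crucial check is that when these are assembled via Proposition~\ref{prop:deltaformula}(\ref{eq:deltaformulaT}) the quadratic-character contributions cancel exactly, leaving the clean expression $\tfrac12-\tfrac{1}{4|P|^\rho}-\tfrac{1}{4|P|^{\rho+1}}$. The final positivity claim, and hence via Corollary~\ref{cor:localobstruction} the absence of local obstructions for $k=2$, is then a one-line case-by-case inspection of the three explicit formulas.
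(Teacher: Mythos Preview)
Your proposal is correct and follows exactly the paper's approach: apply Proposition~\ref{prop:deltaformula} with $\nu=\rho+1$, compute $\#\Omega_{q,h}(P^{\rho+1})$ via \eqref{eq:formula_Omega} and $\#\Omega^*_{q,h}(P^{\rho+1})$ by a direct count using Lemma~\ref{lem:characterize}, and then verify the cancellation of the $\chi_q(\pm c)$ terms in the $\chi_q(P)=0$ case. The paper's proof merely states the resulting values of $\#\Omega$ and $\#\Omega^*$ in each case without spelling out the counting, so your write-up is actually more detailed than theirs.
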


\begin{proof}
Apply Proposition \ref{prop:deltaformula} with $\nu=\rho+1$.
If $\chi_q(P)=-1$ and $\nu$ is odd, then $\#\Omega_{q,h}(P^\nu)=2$ and $\#\Omega^*_{q,h}(P^\nu)=\frac{|P|^\nu(1-|P|^{-\rho})}{1+|P|^{-1}}+|P|-2$.
If $\chi_q(P)=-1$ and $\nu$ is even, then $\#\Omega_{q,h}(P^\nu)=0$ and $\#\Omega^*_{q,h}(P^\nu)=\frac{|P|^\nu-1}{1+|P|^{-1}}$.
If $\chi_q(P)=0$ and $h=T^\rho g$, then $\#\Omega_{q,h}(P^\nu)=1+\frac{1}{2}(\chi_q(g)+\chi_q(-g))$
and $\#\Omega_{q,h}^*(P^\nu)=\frac{q^\nu}{2}-\frac{q}{2}+\frac{1}{4}(q-3-\chi_q(g)-\chi_q(-g))$.
\end{proof}

\begin{example}
For $h=(0,1)$, we get
$$
 \delta_{q,h}(P) = \begin{cases} 
 1-\frac{2}{|P|+1},&\chi_q(P)=-1\\
 \frac{1}{4}-\frac{1}{4|P|},&\chi_q(P)=0\\
  1,&\chi_q(P)=1\end{cases}
$$
For $h=(0,T)$, we get
$$
 \delta_{q,h}(P) = \begin{cases} 
 1-\frac{2}{|P|+1},&\chi_q(P)=-1\\
 \frac{1}{2}-\frac{1}{4|P|}-\frac{1}{4|P|^2},&\chi_q(P)=0\\
 1,&\chi_q(P)=1 \end{cases}
$$
In particular, we conclude that
$$
 \frac{\mathfrak{S}_{q,(0,T)}}{\mathfrak{S}_{q,(0,1)}} = \frac{\delta_{q,(0,T)}(T)}{\delta_{q,(0,1)}(T)} =  \frac{2q^2-q-1}{q(q-1)}=2+\frac{1}{q},
$$
so Conjecture \ref{conj_short} predicts that
\begin{equation}\label{eq:1T}
 \frac{\left<b_q(f)b_q(f+T)\right>_{f\in M_{n,q}}}{\left<b_q(f)b_q(f+1)\right>_{f\in M_{n,q}}} \sim 2+\frac{1}{q},\quad q^n\rightarrow\infty.
\end{equation}
\end{example}

\begin{remark}
We note that, as opposed to the situation in integers, 
for small $q$ there can be a local obstruction in the case $k=3$,
e.g.~if $q=3$ and $h=(0,1,2)$, then $\delta_{q,h}(T)=0$.
\end{remark}

\begin{proposition}\label{prop:limSqh}
Fix $k\geq 1$ and $d\geq 1$.
For $h_1,\dots,h_k\in\mathbb{F}_q[T]$ pairwise distinct and of degree less than $d$,
\begin{eqnarray*}
  \mathfrak{S}_{q,h} = \mathfrak{S}_h+O_{k,d}(q^{-1/2}),
\end{eqnarray*}
where the implied constant depends only on $k$ and $d$,
and $\mathfrak{S}_{q,h}$ and $\mathfrak{S}_h$ are defined as in (\ref{eqn:def_Sqh}) and (\ref{eqn:def_Sh}).
\end{proposition}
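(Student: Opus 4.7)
The plan is to leverage the factorization \eqref{eqn:S_tildeS}, which decomposes $\mathfrak{S}_{q,h}$ into three factors: the ``$T$-factor'' $\delta_{q,h}(T)/\delta_{q,0}(T)^k$; a finite product indexed by primes $P\mid\Delta_h$ with $\chi_q(P) = -1$; and $\tilde{\mathfrak{S}}_{q,k}$. I will show that the latter two contribute only $1 + O_{k,d}(q^{-1})$, while the $T$-factor carries the full main term $\mathfrak{S}_h = 2^{k-s}$, where $s := \#\{h_1(0), \ldots, h_k(0)\}$. The only nontrivial input will be the Weil bound for character sums over squarefree polynomials.

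The two easy factors can be handled with what is already proved. Lemma \ref{lem:productconverges} gives $\tilde{\mathfrak{S}}_{q,k} = 1 + O_k(q^{-1})$ for free. For the middle product, I will use Proposition \ref{lem:delta_chi_-1} to write each factor in the form $\frac{1 - \eta/(|P|+1)}{1 - k/(|P|+1)}$ with $1 \leq \eta \leq k$; since $|P| \geq q$, this is $1 + O_k(q^{-1})$ (at least once $q > 2k$; smaller $q$ can be absorbed into the implied constant), and since $\deg\Delta_h < k(k-1)d$ there are only $O_{k,d}(1)$ such primes, so the full product is $1 + O_{k,d}(q^{-1})$.

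The hard part is showing $\delta_{q,h}(T) = 2^{-s} + O_k(q^{-1/2})$; combined with $\delta_{q,0}(T) = 1/2$ from Corollary \ref{lem:delta0} (since $\chi_q(T)=0$), this gives exactly $\mathfrak{S}_h + O_k(q^{-1/2})$ for the $T$-factor. By Lemma \ref{lem:characterize}(2), conditioning on $c := f(0)$, the condition $f + h_i \in \mathcal{A}_q(T^\nu)$ for every $i$ reduces---provided $c + h_i(0) \neq 0$ for all $i$---to requiring each $c + h_i(0)$ to be a nonzero square in $\FF_q$; the remaining $\leq s$ values of $c$ contribute a negligible $O_k(q^{-1})$. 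Writing $\{a_1, \ldots, a_s\}$ for the distinct constant terms, the main task is then to count $c \in \FF_q$ with $c + a_j$ a nonzero square for every $j$. Expanding the nonzero-square indicator as $\frac{1}{2}(\mathbf{1}_{x\neq 0} + \chi(x))$, where $\chi$ is the quadratic character of $\FF_q$ (defined since $q$ is odd), the $S = \emptyset$ term contributes $(q-s)/2^s$, and each of the $2^s - 1$ nonempty $S \subseteq \{1,\ldots,s\}$ yields a character sum $\sum_c \chi\bigl(\prod_{j \in S}(c + a_j)\bigr)$ over a squarefree polynomial of degree $|S| \geq 1$, which is $O_s(\sqrt{q})$ by the Weil bound. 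Summing gives $q/2^s + O_s(\sqrt{q})$, hence $\delta_{q,h}(T) = 2^{-s} + O_k(q^{-1/2})$, and combining the three estimates proves the proposition. The main obstacle is thus packaged into a single application of the Weil bound.
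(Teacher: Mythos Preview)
Your proposal is correct and follows essentially the same approach as the paper: both decompose $\mathfrak{S}_{q,h}$ via \eqref{eqn:S_tildeS}, handle $\tilde{\mathfrak{S}}_{q,k}$ and the $\Delta_h$-product with Lemma~\ref{lem:productconverges} and Proposition~\ref{lem:delta_chi_-1}, and reduce to showing $\delta_{q,h}(T)=2^{-s}+O_k(q^{-1/2})$ by expanding the square-indicator as $\tfrac{1}{2}(1+\chi)$ and applying the Weil bound to the resulting character sums. The only cosmetic difference is that the paper routes the computation of $\delta_{q,h}(T)$ through the auxiliary quantity $\Omega^*_{q,h}(T)$ and Proposition~\ref{prop:deltaformula}, whereas you work directly from Lemma~\ref{lem:characterize}(2); both amount to the same counting argument.
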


\begin{proof}
Applying Lemma \ref{lem:productconverges},
Proposition \ref{lem:delta_chi_-1}, Corollary \ref{lem:delta0}
to (\ref{eqn:S_tildeS}) and noting that $|P|\geq q$ gives
$$
 \mathfrak{S}_{q,h} = \frac{\delta_{q,h}(T)}{\delta_{q,0}(T)^k}\cdot\prod_{P|\Delta_h,\chi_q(P)=-1}\frac{\delta_{q,h}(P)}{1-\frac{k}{|P|+1}}\cdot\tilde{\mathfrak{S}}_{q,k}
 =2^k\delta_{q,h}(T)+O(q^{-1}).
$$
Let $S=\{h_1(0),\dots,h_k(0)\}$ and $l=\#S$.
Since on $\mathbb{F}_q^\times$, 
$\frac{\chi_q+1}{2}$ is the indicator function of $\mathcal{A}_q(T)$, 
by the definition of $\Omega^*_{q,h}$ in \eqref{eq:def_Omegas} we can write 
$$
 \#\Omega^*_{q,h}(T)=\sum_{\substack{\alpha\in\mathbb{F}_q\\-\alpha\notin S}}\prod_{\beta\in S}\frac{\chi_q(\alpha+\beta)+1}{2}
  = 2^{-l} \sum_{S'\subseteq S}\sum_{\substack{\alpha\in\mathbb{F}_q\\-\alpha\notin S}}\chi_q(\prod_{\beta\in S'}(\alpha+\beta)).
$$
The Hasse-Weil theorem gives that $\sum_{\substack{\alpha\in\mathbb{F}_q\\-\alpha\notin S}}\chi_q(\prod_{\beta\in S'}(\alpha+\beta))=O(q^{-1/2})$
when $S'\neq \emptyset$. When $S'=\emptyset$, one trivially has $\sum_{\substack{\alpha\in\mathbb{F}_q\\-\alpha\notin S}}\chi_q(\prod_{\beta\in S'}(\alpha+\beta)) = q-l$. Thus,
$\#\Omega^*_{q,h}(T)=\frac{q-l}{2^l}+O(q^{1/2})$ and
$$
 q^{-\nu}\#\Omega^*_{q,h}(T^\nu)=q^{-\nu}\cdot(q^{\nu-1}\#\Omega^*_{q,h}(T)+O(q^{\nu-1}))= \frac{1}{2^l}+O(q^{-1/2}).
$$ 
By Proposition \ref{prop:deltaformula}, $\delta_{q,h}(T)=2^{-l}+O(q^{-1/2})$, as claimed.
\end{proof}

We conclude this section by pointing out that $\delta_{q,h}(P)$ could be defined (and possibly computed) also in different ways,
for example as the asymptotic density of
\begin{eqnarray*} 
\mathcal{A}_{q,h}(P^\infty) &=& \{ f\in\mathbb{F}_q[T] : f+(P^\nu)\in\mathcal{A}_{q,h}(P^\nu)\mbox{ for all }\nu\}
\end{eqnarray*}
like in \cite{FKR}, or as the Haar measure of a suitable set in the completion of $\FF_q[T]$ at $P$.

%
%, in the case ${\rm deg}(P)=1$,\footnote{should we generalize this?}
%as the Haar measure in the completion $\mathbb{F}_q((T))$ of $\mathbb{F}_q(T)$ with respect to $v_P$ of the set
%$$
% \bigcap_{i=1}^k \left\{ A^2+TB^2-h_i : A,B\in\mathbb{F}_q[[T]] \right\}.
%$$
%
%
%$\delta_{q,h}(P) $ is the Haar measure of 
%$$
% \bigcap_{i=1}^k \left\{ A^2+TB^2-h_i : A,B\in R \right\}.
%$$
%where $R$ is the completion of $\FF_q[T]$ at $P$.

\subsection{Numerics}
\label{sec:numerics}
In this section we compare Conjecture \ref{conj_short} with numerical computations. 
All our computation were done with the SageMath mathematics software system \cite{Sage}.
The algorithm to compute the function $b_q(f)$ is based on factoring $f$ into irreducibles and then applying Proposition~\ref{prop:Fermat}.
Here we exploit the fact that there is a fast factorization algorithm for $\mathbb{F}_q[T]$ implemented, 
as opposed to the situation in integers.

In the computations of the singular series, we need to compute $\delta_{q,h}(P)$, for which we apply 
Proposition~\ref{lem:delta_chi_-1} and Proposition~\ref{prop:deltaformula}, among other results.

We enumerate the polynomials in $\mathbb{F}_q[T]$ in the following order: $0,1,\dots,T,T+1,\dots$,
and we will use this enumeration also in what follows.

\subsubsection{Varying $h$}
Conjecture \ref{conj} predicts that for each $h\in\mathbb{F}_q[T]$,
\begin{equation}\label{eq:varyinghconj}
 \frac{\left<b_q(f)b_q(f+h)\right>_{f\in M_{n,q}}}{\left<b_q(f)b_q(f+1)\right>_{f\in M_{n,q}}} \sim \frac{\mathfrak{S}_{q,(0,h)}}{\mathfrak{S}_{q,(0,1)}},\quad n\rightarrow\infty.
\end{equation}
In Figures~\ref{fig:2} and \ref{fig:1}, we compare this conjecture with numerics for 
$n=100$, $q=3$ and the 729 different $h$ of degree $<6$,
enumerated as $h_1=0,h_2=1,\dots,h_{729}=2T^5+\dots+2$ as indicated above.
We have sampled $N=5n^2=50000$ monic $f\in \FF_q[T]$ of degree $n$ chosen at random
and compute
\[
N_1(h_i) = \frac{\sum b_q(f)b_q(f+h_i)}{\sum b_q(f)b_q(f+1)} \qquad \mbox{and} \qquad H_1(h_i) = \frac{\frak{S}_{q,(0,h_i)}}{\mathfrak{S}_{q,(0,1)}},
\]
where the sum in $N_1$ is taken over the sampled $f$'s
and $H_1(h_i)$ is computed precisely. 
Figure~\ref{fig:1} is analogous to \cite[Figure 2]{ConnorsKeating}.
\begin{figure}[h]
\includegraphics[width=0.45\textwidth]{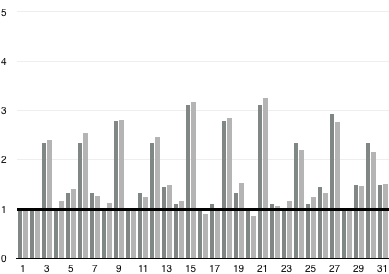}  
\hfil
\includegraphics[width=0.45\textwidth]{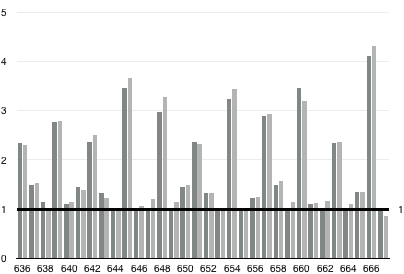}
\caption{\label{fig:2}
The plots show $N_1(h_i)$ (in light grey) and $H_1(h_i)$ (in dark grey) as functions of $i$ for the first $31$ indices (in the left plot) and for a random starting point (in the right plot) }
\end{figure}

\begin{figure}[h]
\includegraphics[width=0.49\textwidth]{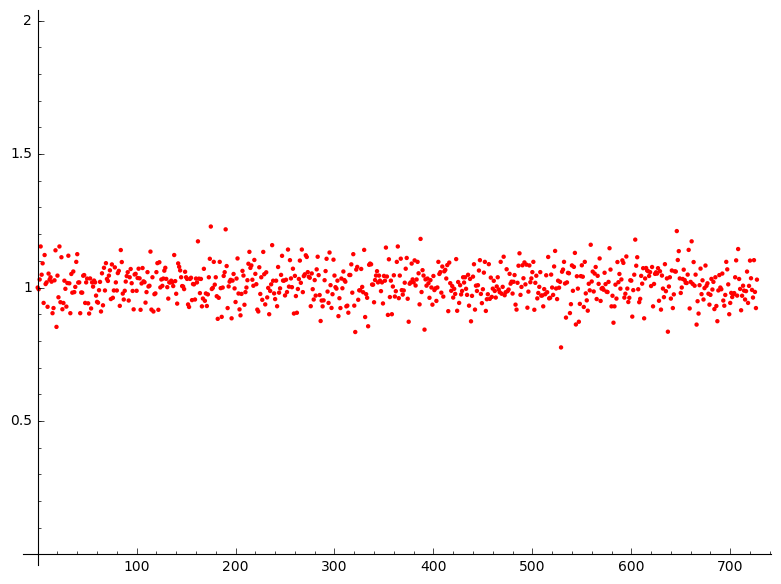}
\caption{\label{fig:1}
 $N_1(h_i)/H_1(h_i)$ as a function of $1\leq i\leq 729$ }
\end{figure}

\subsubsection{Varying $n$}
For $h=(h_1,\ldots, h_k)\in \FF_q[T]^k$ a $k$-tuple of pairwise distinct polynomials, Conjecture \ref{conj} predicts that
\begin{equation}\label{eq:varyinghconj}
\frac{\left<\prod_{i}b_q(f+h_i)\right>_{f\in M_{n,q}}}{\left<b_q(f)\right>_{f\in M_{n,q}}^k} \rightarrow \mathfrak{S}_{q,h},\quad n\rightarrow\infty.
\end{equation}
We compare this conjecture with numerics for $q=5$, $3\leq n<100$ and $h \in\{(0,1),(0,T)\}$ in Figure~\ref{fig:varyingn}, and $h=(0,1,T)$  in Figure~\ref{fig:varyingn-3correlation}. 
For each $n$ we have sampled $N=q n^{5/2}$ monic $f\in \FF_q[T]$ of degree $n$ chosen at random,
and for each $h=(h_1,\ldots, h_k)$ as above we computed
\[
N_{2,h}(n) = \frac{N^{k-1} \sum \prod_{i=1}^k b_q(f+h_i) }{(\sum b_q(f))^k}, 
\]
where the sums are taken over the sampled $f$'s. 

\begin{figure}[h]
\includegraphics[width = 0.49\textwidth]{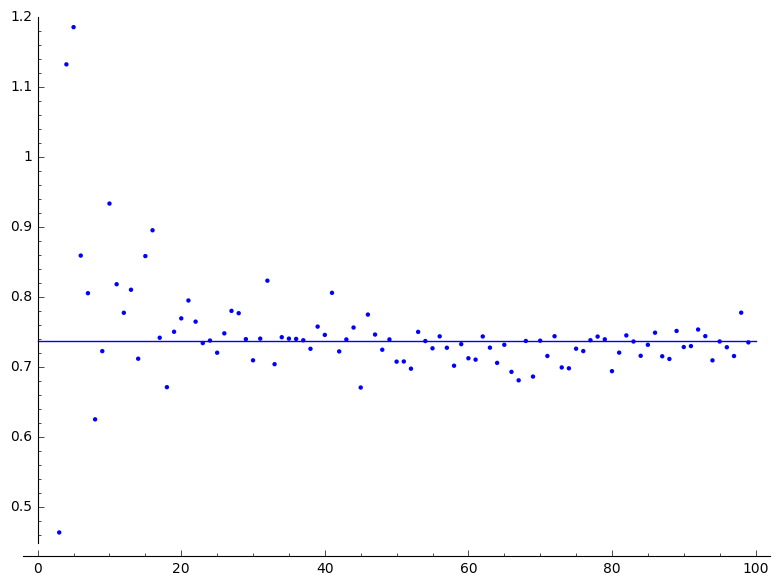}
\hfil
\includegraphics[width = 0.49\textwidth]{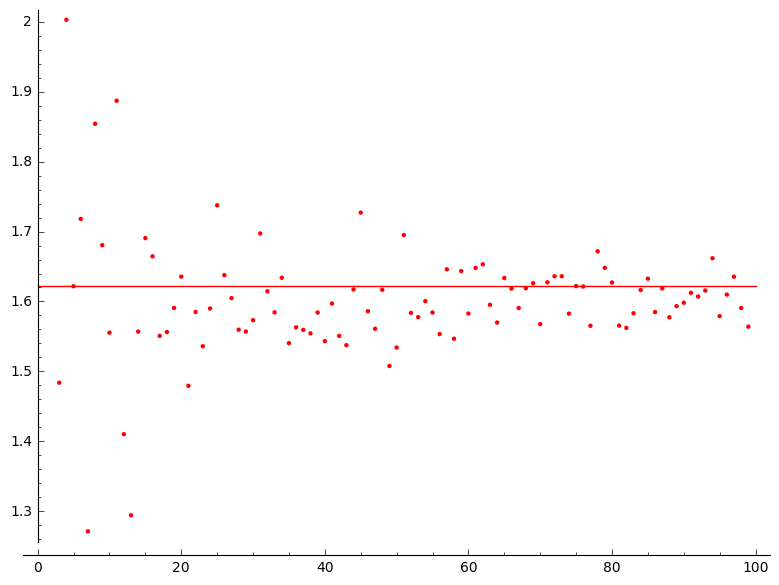}
\caption{\label{fig:varyingn}The plots of $N_{2,(0,1)}(n)$ (on the left) and $N_{2,(0,T)}$ (on the right) as functions of $n$. The lines are approximated values of the respective singular series $\mathfrak{S}_{5,(0,1)}$ and $\mathfrak{S}_{5,(0,T)}$.}
\end{figure}

\begin{figure}[h]
%\caption{Numerical versus theory for several $n$}
\includegraphics[width=0.49\textwidth]{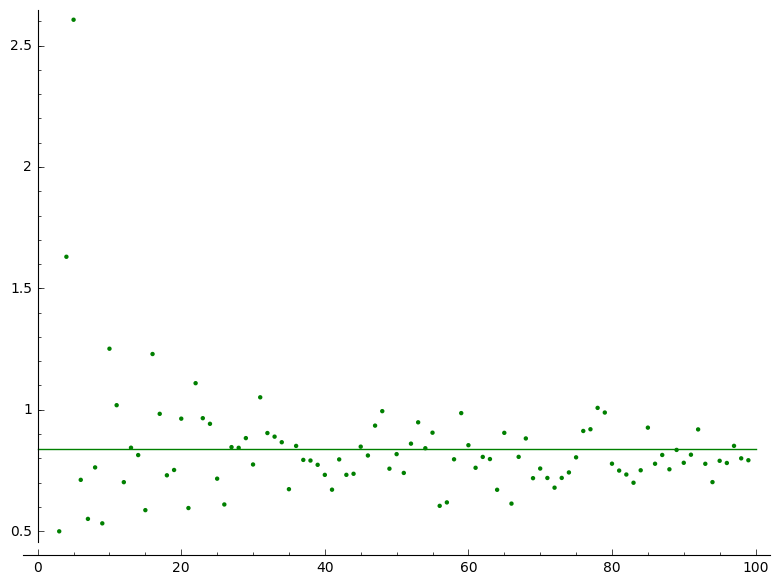}
\caption{\label{fig:varyingn-3correlation}$N_{2,(0,1,T)}(n)$ as a function of $n$. The line is an approximated value of the respective singular series $\mathfrak{S}_{5,(0,1,T)}$.}
\end{figure}

\subsubsection{Varying q}
Conjecture \ref{conj} (cf.\ \eqref{eq:1T}) suggests  that
\begin{equation}\label{eq:varyingqconj}
\frac{\left<b_q(f)b_q(f+T)\right>_{f\in M_{n,q}}}{\left<b_q(f)b_q(f+1)\right>_{f\in M_{n,q}}} = 2+\frac{1}{q}(1 + o(1 )), \qquad q\to \infty.
\end{equation}
In Figure~\ref{fig:varyingq} we compare this conjecture with numerics. For each prime $2<q<30$ we evaluate the left hand side of \eqref{eq:varyingqconj} by sampling random monic $f \in \FF_q[T]$ of degrees $3$ up to $49$ (and for $q=29$ up to $69$), and this we denote by $N_3(q)$. 
We compare it to $H_3(q)=2+1/q$.
\begin{figure}[h]
\includegraphics[width=0.49\textwidth]{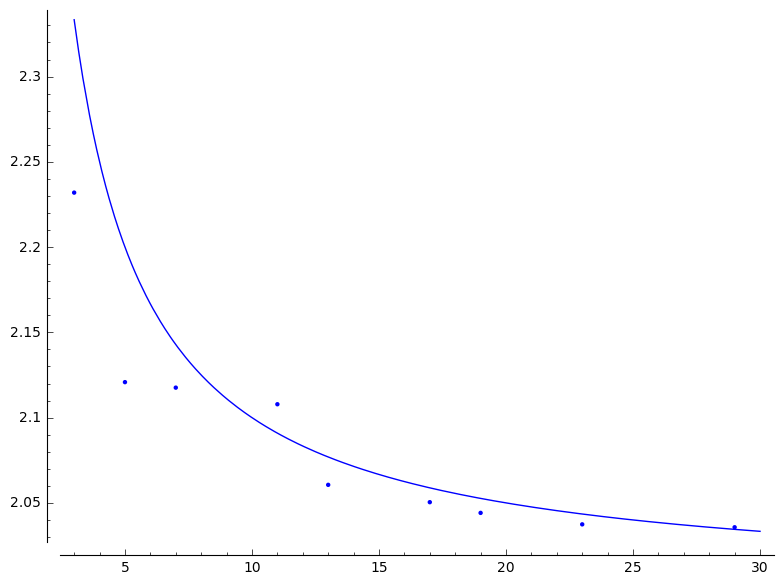}
\caption{\label{fig:varyingq}The curve is $H_3(q)=2+\frac{1}{q}$ as a function of $q$ and the points are $(q,N_3(q))$ for primes $2<q<30$.}
\end{figure}

\begin{figure}[h]
\includegraphics[width=0.45\textwidth]{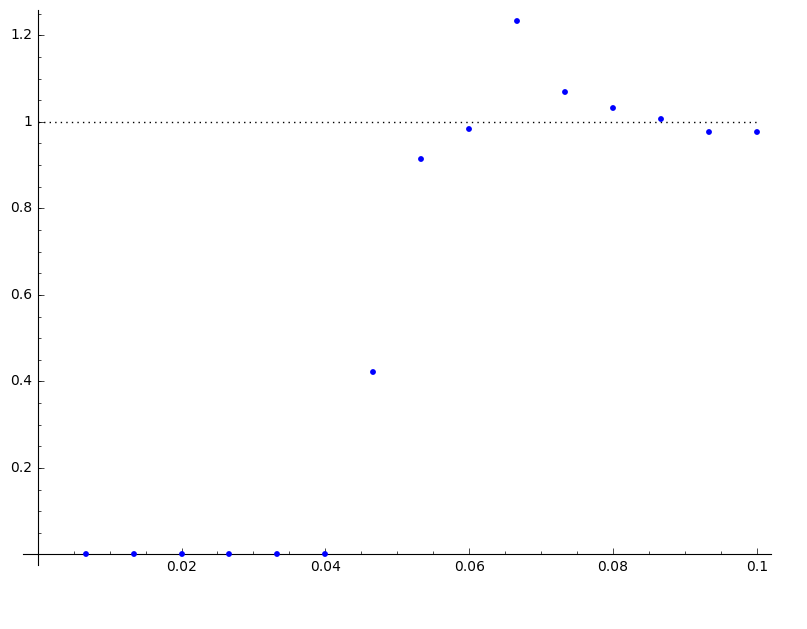}
\includegraphics[width=0.45\textwidth]{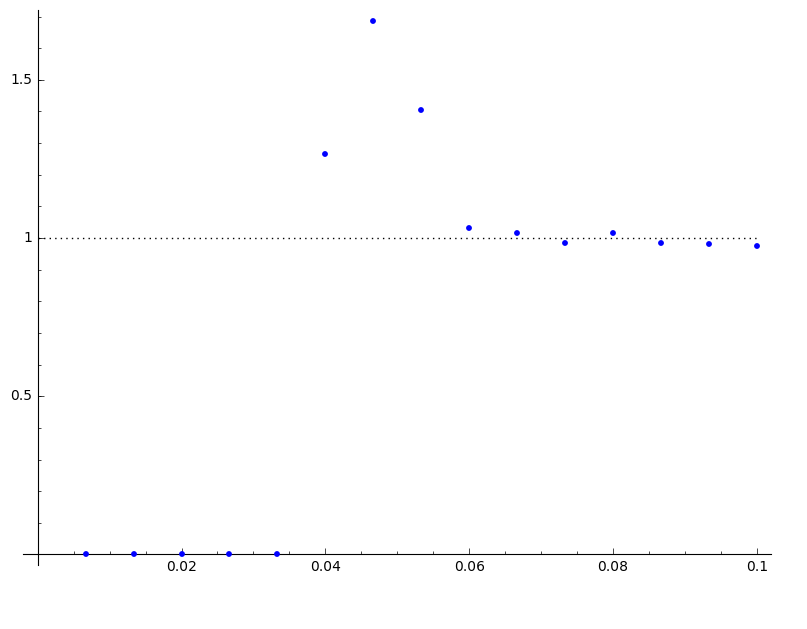}
\caption{\label{fig:varyingepsilon}
Here we plot $N_{4,f_0}(\epsilon)/H_{4,{\rm deg}(f_0)}$ as a function of $\epsilon$,
for $f_0(T)=T^{150}$ (in the left diagram) and a $f_0$ of degree $150$ that is randomly chosen
(in the right diagram).}
\end{figure}

\begin{figure}[h]
\includegraphics[width=0.45\textwidth]{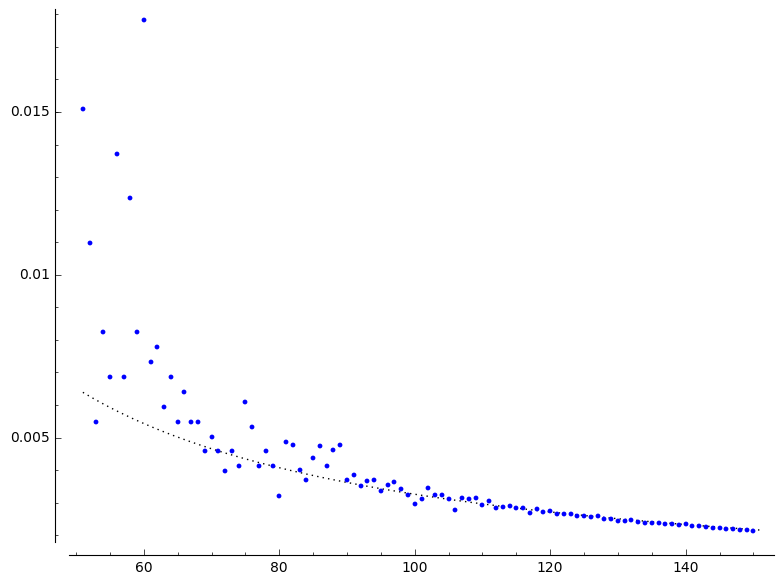}
\includegraphics[width=0.45\textwidth]{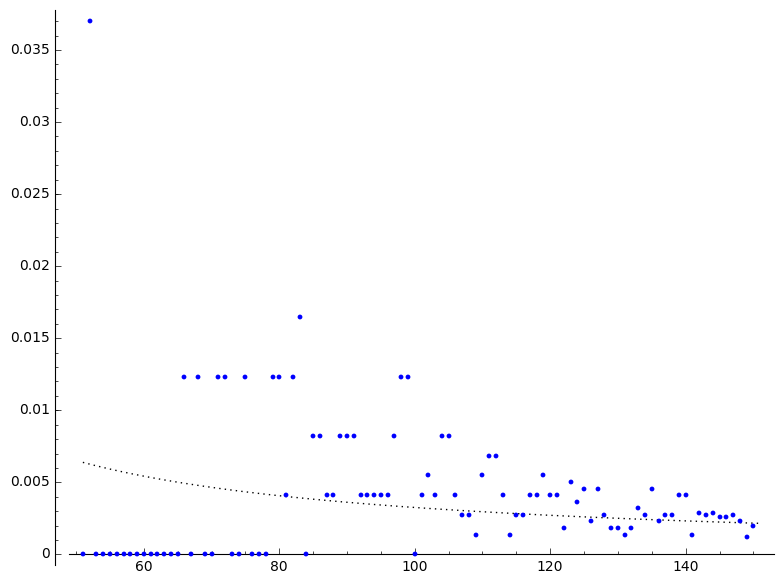}
\caption{\label{fig:varyingepsilon2}
Here we plot $N_{4,T^n}(\epsilon)$ as a function of $n$,
for $\epsilon=0.1$ (in the left diagram) and $\epsilon=0.05$ (in the right diagram),
and compare it to $H_{4,n}\approx 0.3257/n$. }
\end{figure}

\subsubsection{Short intervals}
We finally check Conjecture~\ref{conj_short} about autocorrelations at $h=(0,1)$ in short intervals.
Here we take $q=3$ and compute 
\begin{equation}\label{eq:average}
N_{4,f_0}(\epsilon)=\left< b_q(f)b_q(f+1) \right>_{|f-f_0|< |f_0|^{\epsilon}} 
\end{equation}
deterministically for fixed $f_0$ of degree $n\leq 150$ and various $\epsilon\leq 0.1$.
Note that $N_{4,f_0}(\epsilon)$ is piecewise constant, namely constant on each interval $\frac{i}{n}\leq\epsilon<\frac{i+1}{n}$,
and we chose one epsilon in each of these intervals, namely $\epsilon=\frac{i}{n}$.
In Figure~\ref{fig:varyingepsilon} we take two $f_0$
(namely $f_0=T^{150}$ and a $f_0$ of the same degree that is randomly chosen\footnote{$f_0=T^{150} + 2 T^{149} + T^{146} + T^{144} + 2 T^{143} + T^{140} + T^{138}
+ T^{137} + T^{136} + 2 T^{135} + 2 T^{133} + T^{132} + T^{130} + 2
T^{128} + 2 T^{127} + T^{125} + 2 T^{124} + 2 T^{122} + 2 T^{118} +
T^{117} + T^{116} + 2 T^{115} + T^{113} + T^{112} + 2 T^{111} + 2
T^{110} + T^{109} + T^{106} + 2 T^{103} + 2 T^{102} + 2 T^{100} + T^{98}
+ T^{97} + 2 T^{96} + T^{95} + 2 T^{94} + 2 T^{91} + 2 T^{90} + T^{88} +
2 T^{87} + 2 T^{86} + 2 T^{85} + T^{84} + 2 T^{82} + T^{81} + T^{80} + 2
T^{76} + T^{75} + T^{74} + T^{73} + 2 T^{72} + 2 T^{70} + T^{69} + 2
T^{67} + T^{66} + 2 T^{63} + T^{61} + T^{59} + 2 T^{58} + T^{57} + 2
T^{56} + T^{52} + T^{51} + 2 T^{50} + 2 T^{49} + T^{48} + 2 T^{47} +
T^{46} + 2 T^{44} + 2 T^{43} + 2 T^{40} + 2 T^{39} + T^{35} + T^{34} + 2
T^{33} + T^{32} + 2 T^{30} + 2 T^{29} + 2 T^{27} + 2 T^{26} + T^{25} +
T^{24} + T^{22} + T^{20} + T^{18} + 2 T^{16} + 2 T^{15} + 2 T^{14} + 2
T^{13} + 2 T^{12} + 2 T^{11} + T^{10} + 2 T^{9} + T^{8} + 2 T^{7} + 2
T^{6} + 2 T^{3} + 2 T^{2} + 2 T + 2$}), and we vary $\epsilon$.
In Figure~\ref{fig:varyingepsilon2} we take $\epsilon=0.1$ and $\epsilon=0.05$, and we vary $n$.
In each case, we compare $N_{4,f_0}(\epsilon)$ to the value 
$$
 H_{4,n} = \mathfrak{S}_{3,(0,1)}\cdot\left( K_3\cdot\frac{1}{4^n}\left(2n\atop n\right) \right)^2 \approx \frac{\mathfrak{S}_{3,(0,1)}K_3^2}{\pi n}
$$
predicted by Conjecture \ref{conj_short},
where we use approximate values
$\mathfrak{S}_{3,(0,1)}\approx 0.5736$
and
$K_3\approx 1.3357$.
The observed quantization in the right diagram of Figure 7 is explained by the fact that 
here for small $n$ the short interval contains very few polynomials -- 
for example for $60\leq n<80$ just enough to contain either $0$ or $1$ element $f$ with $b_q(f)b_q(f+1)=1$.

\section{Arithmetic functions of signed factorization type}
\label{sec:signed}

In this section we define arithmetic functions depending on signed factorization type (see Section~\ref{sec:signed_cycle_structure})
and prove a general result (Theorem \ref{thm:general}) for correlations of such functions in short intervals.
This result explains the autocorrelations through distributions on finite groups. 
More precisely, it allows to compute the large finite field limit of an expression of the form
$$
 \left<\prod_{i=1}^k\psi_i(f+h_i)\right>_{|f-f_0|<|f_0|^\epsilon}
$$
as an average over corresponding functions on a certain finite group
depending only on the combinatorics of $h_1(0),\dots,h_k(0)$.
In Section \ref{sec:signed_cycle_structure} we introduce arithmetic functions depending on signed factorization type,
in Section \ref{sec:functions_on_groups} we explain the corresponding functions on finite groups
and prove a general statement using a Chebotarev theorem.
In Sections \ref{sec:fiber_products} and \ref{sec:Galois_group} we then study the finite groups
that are relevant for correlations in short intervals,
namely fiber products of hyperoctahedral groups,
and in Section \ref{sec:proofgeneral} we state and prove the general theorem.

\subsection{Signed factorization type}
\label{sec:signed_cycle_structure}
A signed factorization type is a function 
\[
 \lambda\colon\mathbb{N}\times\mathbb{N}\times\{\pm1,0\}\rightarrow\mathbb{Z}_{\geq0}
\]
with finite support, and we denote by $\Lambda$ the set of all signed factorization types.
For $\lambda\in\Lambda$ we let
$$
 {\rm deg}(\lambda) = \sum_{d\in\mathbb{N}}\sum_{e\in\mathbb{N}}\sum_{s\in\{\pm1,0\}}\lambda(d,e,s)de
$$
and
\begin{eqnarray}
\label{eqn:def_chi}
 \chi(\lambda)&=&\begin{cases}(-1)^{\sum_{d,e\in\mathbb{N}}\lambda(d,e,-1)e}, &\mbox{ if }\lambda(d,e,0)=0\mbox{ for all }d,e\\
  0,&\mbox{ otherwise.}\end{cases}
\end{eqnarray}
To each $f\in M_q$ with prime factorization $f=P_1^{e_1}\cdots P_r^{e_r}$ 
we assign a signed factorization type by setting
$$
 \lambda_{f}(d,e,s) := \#\{i : {\rm deg}(P_i)=d, e_i=e, \chi_q(P_i)=s \}.
$$
Note that 
\[
\begin{array}{lll}
\deg(\lambda_f)&=&\deg(f),  \\
\chi(\lambda_f)&=&\chi_q(f),     \\
f \mbox{ is square-free} &\Leftrightarrow& \sum_{d,s}\sum_{e>1}\lambda_f(d,e,s)=0 %\ \forall (d,e,s) \mbox{ with } e>1
,\\
f(0)\neq 0 &\Leftrightarrow& \sum_{d,e}\lambda_f(d,e,0)=0.
\end{array}
\]
We denote by $\Lambda^*$ the space of functions $\psi\colon\Lambda\rightarrow\mathbb{C}$.
Each function $\psi\in\Lambda^*$ induces a family of arithmetic functions  $\psi_q$ (for $q$ an odd prime power)
on $M_q$ given by
\[
 \psi_q\colon M_q\rightarrow\mathbb{C},\quad f\mapsto\psi(\lambda_f).
\]
Many arithmetic functions are of this form, in particular:
\begin{enumerate}
\item The restriction of the quadratic character $\chi_q$ (see (\ref{eqn:chi_q})) to $M_q$, which is induced from
$\chi$ defined in (\ref{eqn:def_chi}).
\item The indicator function of prime polynomials, which is induced from
$$
 1_\mathbb{P}(\lambda)=\begin{cases}1,&\mbox{if }\sum_{d,s}\lambda(d,1,s)=\sum_{d,e,s}\lambda(d,e,s)=1\\0,&\mbox{otherwise}\end{cases},
$$
and the closely related
function field analogue of the von Mangoldt function,
see e.g.~\cite{KeatingRudnick},
%defined by
%$$
% \Lambda_q(f) = \begin{cases} {\rm deg}(P), &\mbox{if } f = P^k, P\mbox{ irred.}, k\geq 1\\0,&\mbox{ otherwise.}\end{cases}
%$$
which is induced from
$$
 \Lambda(\lambda)=\begin{cases} d_0, & \mbox{if }\sum_{e,s}\lambda(d_0,e,s)=\sum_{d,e,s}\lambda(d,e,s)=1,\\
 %\lambda(d,e,s)\neq 0 \mbox{ for exactly one triple }(d,e,s)\mbox{ with }de={\rm deg}(\lambda)\\
 0, &\mbox{otherwise}, \end{cases}
$$
\item The function $b_q$ defined in (\ref{eqn:def_bq}), which by Proposition \ref{prop:Fermat} is induced from 
$$
 b(\lambda)=\begin{cases} 1, &\mbox{if }\lambda(d,2e+1,-1)=0\mbox{ for all }d,e, \\ 0, &\mbox{otherwise.} \end{cases}
$$
\item The function field analogue of the M\"obius function $\mu$, see e.g.~\cite{CarmonRudnick}.
\item The function field analogue of the function $r$ counting
the number of representations as sums of two squares, see Section \ref{sec:r}.
%defined by
%$$
% r_q(f) = \#\{(A,B) : A,B\in\mathbb{F}_q[T], f=A^2+TB^2\}/\pm1,
%$$
%which is induced from
\item The function field analogue of the $r$-divisor function $d_r$, see Section \ref{sec:dkchi}.
\end{enumerate}

%Our general result 
%expresses the limit of correlations of arithmetic functions depending on signed cycle type 
%(i.e.~induced from elements of $\Lambda^*$) 
%as the average over 

%\begin{theorem}\label{thm:general}
%Fix $n\geq m\geq 2$ and let $\epsilon=\frac{m}{n}$. Fix $k\geq 1$ and $\psi_1,\dots,\psi_k\in\Lambda^*$.
%Then for $q$ an odd prime power, 
%$f_0\in\mathbb{F}_q[T]$ monic of degree $n$ and $h_1,\dots,h_k\in\mathbb{F}_q[T]$ of degree less than $m$ and pairwise distinct,
%\begin{eqnarray*}
%  \left< \prod_{i=1}^k \psi_{i,q}(f+h_i) \right>_{|f-f_0|<|f_0|^\epsilon} &=&
% \mathfrak{C}_{h,\psi} %\cdot\prod_{i=1}^k\left< \psi_{i,q}(f) \right>_{f\in M_{n,q}} 
% + O_{n,k,\psi}(q^{-1/2}) 
%\end{eqnarray*}
%where the implied constant depends only on $n$, $k$ and $\psi_1,\dots,\psi_k$, and
%$\mathfrak{C}_{h,\psi}\in\mathbb{Q}$ 
%is in a finite set of possible values depending only on $n$, $k$ and $\psi$.
%can be computed explicitly from $h$ and $\psi_1,\dots,\psi_k$.
%\end{theorem}

%In fact, in we will now interpret $\mathfrak{C}_{h,\psi}$ via a distribution of random elements in a certain finite group.

\subsection{Arithmetic functions on hyperoctahedral groups}
\label{sec:functions_on_groups}

The {\em hyperoctahedral group} (aka, \emph{signed permutation group}) of degree $n$ is the permutational wreath product 
\begin{equation}\label{hyperoctahedralgroup}
 {\rm H}_n=\mathbb{F}_2\wr S_n=V\rtimes S_n
\end{equation}
with $V=(\mathbb{F}_2)^n$.
%For $x=(x_1,\dots,x_n)\in V$, let $w(x)=\#\{i:x_i=1\}$; then, 
The $S_n$-invariant subspaces of $V$ are 
%\begin{equation}\label{invariantsubspaces}
\begin{equation}
\label{invariantsubgroups}
\begin{array}{lclp{.08\textwidth}lcl}
V_n&=&V, 
	&& V_{n-1}&=&\{x\in V:\sum_{i=1}^nx_i=0\},\\
V_1&=&\{(0,\dots,0),(1,\dots,1)\}, 
	&& V_0&=&\{(0,\dots,0)\},
\end{array}
\end{equation}
%\end{equation}
%$V_n=V$, $V_{n-1}=\{x\in V:w(x)\equiv 0\mod 2\}$,
%$V_1=\{(0,\dots,0),(1,\dots,1)\}$ and $V_0=\{(0,\dots,0)\}$, 
cf.~\cite[Lemma 4.2]{BBF}.
In particular, $V_{n-1}\rtimes S_n$, $V_n\rtimes A_n$ and $V_{n-1}\rtimes A_n$ are normal subgroups of 
$\mathrm{H}_n$. % has $V_{n-1}\rtimes S_n$, $V_n\rtimes A_n$ and $V_{n-1}\rtimes A_n$ as (normal) subgroups.
The \emph{total sign} homomorphism
\begin{equation}\label{eq:ts}
\ts_n\colon \mathrm{H}_n \to \{\pm1\}
\end{equation}
is given by $\ts_n(x\tau) = (-1)^{\sum_{i} x_i}$, where $x=(x_1,\ldots, x_n)\in V$ and $\tau\in S_n$. It is obvious that $\ts_n$ is surjective and $\ker \ts_n = V_{n-1}\rtimes S_n$. 

To each $\sigma=x\tau\in \mathrm{H}_n$, with $x\in V$ and $\tau\in S_n$, we assign a signed factorization type by setting
\begin{equation}
\label{def:lambdasig}
\lambda_\sigma(d,e,s) = 
\begin{cases}
 	\#\{\Omega:\Omega\mbox{ is an orbit of }\tau,\;\#\Omega=d,\;(-1)^{\sum_{i\in\Omega}x_i}=s\}, &\mbox{if }e=1\\
                      0,&\mbox{otherwise}
\end{cases}
\end{equation}
Note that 
\begin{eqnarray*}
 \deg(\lambda_\sigma)&=&n, \\
 \chi(\lambda_\sigma)&=&\ts_n(\sigma).
\end{eqnarray*} 
Each $\psi\in\Lambda^*$ induces a family of maps 
\[
\psi_n\colon \mathrm{H}_n\rightarrow\mathbb{C},\quad g\mapsto\psi(\lambda_g).
\]
In particular, the function $\chi$ defined in \eqref{eqn:def_chi} induces the total sign maps $\chi_n$. 
From now on, we abuse notation and write $\chi$ instead of $\chi_n$.

We denote by $\mathrm{H}_n^k$ the direct product of $k$ copies of $\mathrm{H}_n$
and by $\pi_i\colon \mathrm{H}_n^k\rightarrow\mathrm{H}_n$ the projection onto the $i$-th component.
For a subgroup $G$ of $\mathrm{H}_n^k$ we define a $k$-multilinear map $\left<\cdot\right>_G\colon(\Lambda^{*})^k\rightarrow\mathbb{C}$ by
\[
 \left<\psi_1,\dots,\psi_k\right>_G := \left<\prod_{i=1}^k\psi_{i,n}(\pi_i(\sigma))\right>_{\sigma\in G} = \frac{1}{\#G}\cdot\sum_{\sigma\in G}\prod_{i=1}^k\psi_i(\lambda_{\pi_i(\sigma)}).
\]
For example, if $G=\mathrm{H}_n^k$, we have \emph{independence}; namely,  
\[
\left<\psi_1,\dots,\psi_k\right>_G = \prod_{i=1}^k \left< \psi_i\right>_{\mathrm{H}_{n}}.
\]
 
Now we describe the connection with arithmetic. Let $K$ be a field of characteristic different from $2$ and let $f\in K[T]$ of degree $n$.
Suppose that $f$ is separable and $f(0)\neq 0$. 
If $\omega_1,\dots,\omega_n$ is an enumeration of the roots of $f$, then
the roots of $f(-T^2)$ are $\pm \rho_1,\dots,\pm\rho_n$,
where $\rho_i$ is a fixed square root of $-\omega_i$.
The Galois action then induces embeddings ${\rm Gal}(f(T)|K)\rightarrow S_n$ and 
\begin{equation}\label{Theta}
 \Theta\colon {\rm Gal}(f(-T^2)|K)\rightarrow\mathrm{H}_n=V\rtimes S_n,
\end{equation}
where the elements of $S_n$ act by permuting the $\omega_i$, %roots of $f(T)$
and the elements of $V$ act by sign change on the $\rho_i$, %roots of $f(-T^2)$,
cf.~\cite[Lemma 3.1]{BBF}.
Over the finite  $K=\mathbb{F}_q$, the Galois group in \eqref{Theta} is generated by the Frobenius automorphism $\phi_q\colon x\mapsto x^q$. 
In fact $\lambda_f$ and $\lambda_{\Theta(\phi_q)}$ are equal: 
%If $K=\mathbb{F}_q[A_1,\dots,A_m]$ and $f_A\in K[T]$,
%then each $a=(a_1,\dots,a_m)\in\mathbb{F}_q^m$ defines a specialization $f_a$ obtained by substituting 
%$a_1,\dots,a_m\in\mathbb{F}_q$ for the variables $A_1,\dots,A_m$.
%As before, assuming $f_a$ is square-free and $f_a(0)\neq 0$, we have 
%an embedding $\Theta_a:{\rm Gal}(f_a(-T^2)|\mathbb{F}_q)\rightarrow\mathrm{H}_n$,
%where ${\rm Gal}(f_a(-T^2)|\mathbb{F}_q)$ is generated by the $q$-Frobenius $\phi_q$.

\begin{lemma}
\label{lem:lambda_f_Theta}
Let $f\in \FF_q[T]$ monic, square-free and not divisible by $T$. Then $\lambda_f = \lambda_{\Theta(\phi_q)}$, where $\phi_q$ is the Frobenius automorphism and $\Theta$ is the map from \eqref{Theta}
\end{lemma}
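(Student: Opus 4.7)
The plan is to verify the equality of signed factorization types triple by triple. Both $\lambda_f$ and $\lambda_{\Theta(\phi_q)}$ are supported on triples $(d,1,s)$ with $s\in\{\pm 1\}$: for $\lambda_{\Theta(\phi_q)}$ this is immediate from the definition \eqref{def:lambdasig}, and for $\lambda_f$ the squarefreeness of $f$ forces every multiplicity $e_i$ to equal $1$, while the hypothesis $T\nmid f$ forces $P_i(0)\neq 0$, so $\chi_q(P_i)\in\{\pm 1\}$ for every irreducible factor $P_i$. I would next match irreducible factors of $f$ with orbits of the $S_n$-part $\tau$ of $\Theta(\phi_q)$: by construction $\tau$ is the permutation the $q$-Frobenius induces on the roots $\omega_1,\dots,\omega_n$ of $f$, its orbits correspond bijectively to the irreducible factors $P_i$, and each orbit has size $\deg(P_i)$.

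The crux is to match the signs. Writing $\Theta(\phi_q)=x\tau$ with $x\in V=\mathbb{F}_2^n$, the coordinate $x_j$ records, per the construction of $\Theta$ recalled before \eqref{Theta}, whether $\phi_q(\rho_j)=\rho_{\tau(j)}$ (if $x_j=0$) or $\phi_q(\rho_j)=-\rho_{\tau(j)}$ (if $x_j=1$). Iterating along an orbit $\Omega$ of $\tau$ of size $d$ starting at any $j_0\in\Omega$ gives
\[
\phi_q^d(\rho_{j_0})=(-1)^{\sum_{j\in\Omega}x_j}\rho_{j_0}.
\]
It then remains to show that this sign equals $\chi_q(P_i)$, where $P_i$ is the factor corresponding to $\Omega$. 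If $\chi_q(P_i)=1$, then $P_i(-T^2)$ factors as a product of two degree-$d$ polynomials, so $\rho_{j_0}\in\mathbb{F}_{q^d}$, hence $\phi_q^d$ fixes it and the sign is $+1$. If $\chi_q(P_i)=-1$, then $P_i(-T^2)$ is irreducible of degree $2d$, so $\rho_{j_0}\in\mathbb{F}_{q^{2d}}\setminus\mathbb{F}_{q^d}$; as $\rho_{j_0}^2=-\omega_{j_0}\in\mathbb{F}_{q^d}$ is fixed by $\phi_q^d$, we must have $\phi_q^d(\rho_{j_0})=-\rho_{j_0}$, giving sign $-1$.

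The only delicate point I anticipate is keeping straight the sign convention inherited from the construction of $\Theta$ in \eqref{Theta}; once that is pinned down, the argument is a direct unwinding of definitions combined with the equivalence $\chi_q(P_i)=-1 \iff P_i(-T^2)$ irreducible over $\mathbb{F}_q[T]$.
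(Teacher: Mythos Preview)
Your proof is correct and follows essentially the same route as the paper: both reduce to the support $(d,1,\pm1)$, match irreducible factors $P_i$ with Frobenius orbits $\Omega_i$ of the same size, and then identify the orbit sign $(-1)^{\sum_{j\in\Omega_i}x_j}$ with $\chi_q(P_i)$. The only difference is that the paper invokes \cite[Lemma~3.3]{BBF} for this last sign identity, whereas you supply the argument directly via the irreducibility criterion for $P_i(-T^2)$ and the observation that $\phi_q^d(\rho_{j_0})=\pm\rho_{j_0}$ with the sign forced by whether $\rho_{j_0}$ lies in $\mathbb{F}_{q^d}$; your direct argument is exactly what that cited lemma encapsulates.
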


\begin{proof}
Let $f=P_1^{e_1}\cdots P_r^{e_r}$ be the prime factorization of $f$.
Note that by assumption, $\chi_q(P_i)\in\{\pm1\}$ and $e_i=1$ for all $i$,
hence
$\lambda_f(d,e,s)=0=\lambda_{\Theta(\phi)}(d,e,s)$ for all $(d,e,s)$ with either $e>1$ or $s=0$.
The set of roots $\Omega=\{\omega_1,\dots,\omega_n\}$ of $f$
is partitioned as $\Omega=\coprod_{i=1}^r\Omega_i$, where $\Omega_i$ is the set of roots of $P_i$.
Write $\Theta(\phi_q)=x\tau$, 
and note that the $\Omega_i$ are exactly the orbits of $\tau$,
of length $\#\Omega_i= \deg(P_i)$.
By \cite[Lemma 3.3]{BBF}, $\chi_q(P_i)=(-1)^{\sum_{j\in\Omega_i} x_j}$.
It follows that for any $(d,s)$ with $s\neq 0$,
\begin{eqnarray*}
 \lambda_f(d,1,s) &=& \#\{i : {\rm deg}(P_i)=d, e_i=1, \chi_q(P_i)=s\} \\
  &=& \#\{i : \#\Omega_i=d, (-1)^{\sum_{j\in\Omega_i} x_j}=s \}
  = \lambda_{\Theta(\phi)}(d,1,s).
\end{eqnarray*}
Thus, $\lambda_f(d,e,s)=\lambda_{\Theta(\phi)}(d,e,s)$ for all $(d,e,s)$.
\end{proof}

Given $k$ polynomials $f_1,\dots,f_k\in K[T]$ that are of degree $n$, separable, with $f_i(0)\neq 0$,
and pairwise coprime, 
the Galois group ${\rm Gal}(\prod_{i=1}^k f_i(-T^2)|K)$ embeds into $\mathrm{H}_n^k$.
The following proposition computes the limit of correlations of arithmetic functions depending on signed factorization type,
where the correlation can be taken over any set of polynomials in $\mathbb{F}_q[T]$
that are specializations of a fixed polynomial $f_A\in\mathbb{F}_q[A_1,\dots,A_m][T]$,
where by a specialization of $f_A$ we mean the polynomial $f_a\in\mathbb{F}_q[T]$ obtained
by substituting $a_1,\dots,a_m\in\mathbb{F}_q$ for the variables $A_1,\dots,A_m$.
For example, the short interval $|f-f_0|<q^m$ is the set of specializations of
$f_A=f_0+\sum_{i=0}^{m-1} A_iT^i$.

\begin{proposition}\label{prop:groupcorrelation}
Fix $n\geq 1$, $m\geq 0$, $k\geq 1$ and $\psi_1,\dots,\psi_k\in\Lambda^*$.
Let $q$ be an odd prime power,
let $f_{1,A},\dots,f_{k,A}\in\mathbb{F}_q[A_1,\dots,A_m][T]$
be monic, of degree $n$, square-free, not divisible by $T$, and pairwise coprime,
and denote 
$$
 g_A(T)=\prod_{i=1}^kf_{i,A}(-T^2)
$$ 
and 
$G={\rm Gal}(g_A|\mathbb{F}_q(A))\leq\mathrm{H}_n^k$.
Assume that $G={\rm Gal}(g_A|\overline{\mathbb{F}}_q(A))$.
Then
$$
 \left< \prod_{i=1}^k\psi_{i,q}(f_{i,a})\right>_{a\in\mathbb{F}_q^m} = %\left<\prod_{i=1}^k\psi_{i,n}(\pi_i(\sigma))\right>_{\sigma\in G} 
 \left<\psi_1,\dots,\psi_k\right>_G + O(q^{-1/2}),
$$
where the implied constant depends only on $n$, $m$, $k$ and $\psi_1,\dots,\psi_k$.
\end{proposition}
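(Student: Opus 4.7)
The plan is a standard function-field Chebotarev calculation, after excising a set of degenerate parameters of density $O(q^{-1})$ from $\mathbb{F}_q^m$. First, I would define the \emph{bad locus} $B \subseteq \mathbb{A}^m_{\mathbb{F}_q}$ as the vanishing locus of
\begin{equation*}
D(A) \;=\; \prod_{i=1}^k f_{i,A}(0) \cdot \prod_{i=1}^k \mathrm{disc}_T(f_{i,A}) \cdot \prod_{i<j}\mathrm{Res}_T(f_{i,A}, f_{j,A}) \;\in\; \mathbb{F}_q[A_1,\dots,A_m].
\end{equation*}
By the hypotheses on the $f_{i,A}$, this $D$ is a nonzero polynomial of degree bounded in terms of $n$ and $k$, so $|B(\mathbb{F}_q)| = O(q^{m-1})$. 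Since each $\psi_i$ takes only finitely many values on signed factorization types of degree $n$, the contribution of $B(\mathbb{F}_q)$ to the normalized average is $O(q^{-1})$ and can be absorbed into the error term.

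For a good parameter $a \in \mathbb{F}_q^m \setminus B(\mathbb{F}_q)$, the polynomials $f_{i,a}$ are square-free, nonzero at $T=0$, and pairwise coprime, so Lemma \ref{lem:lambda_f_Theta} applies to each $f_{i,a}$ individually. The specialization at $a$ equips $g_a$ with a Frobenius element $\sigma_a \in G$ (well defined up to conjugacy), whose $i$-th projection $\pi_i(\sigma_a) \in \mathrm{H}_n$ encodes the Galois action on the roots of $f_{i,a}(-T^2)$. Lemma \ref{lem:lambda_f_Theta} then yields $\lambda_{f_{i,a}} = \lambda_{\pi_i(\sigma_a)}$, and consequently
\begin{equation*}
\prod_{i=1}^k \psi_{i,q}(f_{i,a}) \;=\; \prod_{i=1}^k \psi_{i,n}(\pi_i(\sigma_a)) \;=:\; \Psi(\sigma_a),
\end{equation*}
where $\Psi \colon G \to \mathbb{C}$ is a class function satisfying $\langle \Psi\rangle_G = \langle\psi_1,\dots,\psi_k\rangle_G$ by the very definition of the latter.

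The hypothesis $\mathrm{Gal}(g_A|\mathbb{F}_q(A)) = \mathrm{Gal}(g_A|\overline{\mathbb{F}}_q(A))$ says that the $G$-cover of $\mathbb{A}^m_{\mathbb{F}_q} \setminus B$ determined by $g_A$ is geometrically connected with full monodromy $G$. The explicit Chebotarev density theorem over finite fields (a consequence of the Lang-Weil estimates, or of Deligne's Weil II bounds applied to the $\ell$-adic sheaves attached to $\Psi$) then delivers
\begin{equation*}
\sum_{a \in \mathbb{F}_q^m \setminus B(\mathbb{F}_q)} \Psi(\sigma_a) \;=\; q^m \langle \Psi \rangle_G + O(q^{m - 1/2}),
\end{equation*}
the implied constant depending only on $\deg g_A$ (hence on $n,k$), on $m$, and on $\|\Psi\|_\infty$ (hence on $n, k, \psi_1,\dots,\psi_k$). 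Dividing by $q^m$ and combining with the bad-locus estimate gives the claim.

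The only nontrivial input is the Chebotarev bound itself, where the geometric-equals-arithmetic hypothesis is indispensable: without it, the main term would pick up an extra coefficient counting the Frobenius action on the set of geometric components of the cover, and the identification with $\langle\psi_1,\dots,\psi_k\rangle_G$ would fail. Everything else is bookkeeping that combines Lemma \ref{lem:lambda_f_Theta} with the $k$-fold projection structure of $\mathrm{H}_n^k$.
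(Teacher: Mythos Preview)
Your proof is correct and follows essentially the same strategy as the paper's: excise the bad locus where the discriminant of $g_A$ vanishes (your $D(A)$ and the paper's $\Delta$ have the same zero set), invoke Lemma~\ref{lem:lambda_f_Theta} to identify $\lambda_{f_{i,a}}$ with $\lambda_{\pi_i(\sigma_a)}$ on the good parameters, and apply an explicit function-field Chebotarev theorem (the paper cites \cite[Theorem~5.1]{BBF}). The only cosmetic difference is that the paper first partitions both $\mathbb{F}_q^m$ and $G$ according to the tuple $\underline{\lambda}=(\lambda_1,\dots,\lambda_k)$ and proves equidistribution fiberwise, whereas you sum the class function $\Psi$ directly; since the cited Chebotarev statement is phrased for conjugacy-invariant subsets rather than class functions, the paper's decomposition is how one unwinds your $\Psi$-sum in practice, so the two arguments are equivalent.
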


\begin{proof}
For $\underline{\lambda}=(\lambda_1,\dots,\lambda_k)\in\Lambda^k$ let
$$
 A(\underline{\lambda}) = \{a\in\mathbb{F}_q^m : (\lambda_{f_{1,a}},\dots,\lambda_{f_{k,a}})=\underline{\lambda}\}
$$
and 
$$
 G(\underline{\lambda}) = \{\sigma\in G : (\lambda_{\pi_1(\sigma)},\dots,\lambda_{\pi_k(\sigma)})=\underline{\lambda}\},
$$
and note that 
$$
 \prod_{i=1}^k\psi_{i,q}(f_{i,a})=\prod_{i=1}^k\psi_i(\lambda_i)=\prod_{i=1}^k\psi_{i,n}(\pi_i(\sigma))
$$ 
for all $a\in A(\underline{\lambda})$ and $\sigma\in G(\underline{\lambda})$.
Thus, since each $\psi\in \Lambda^*$ takes only a finite number of values on the set of $\lambda\in\Lambda$ with ${\rm deg}(\lambda)=n$,
it suffices to prove for each $\underline{\lambda}$ that
$$
 \frac{\#A(\underline{\lambda})}{q^m } = \frac{\#G(\underline{\lambda})}{\#G} + O(q^{-1/2}).
$$
Note that the assumption on the $f_{i,A}$'s can be rephrased as saying that the discriminant $\Delta\in\mathbb{F}_q[A]$ 
of $g_A$ is non-zero.
Also note that $\Delta(a)$ is the discriminant of $g_a(T)=\prod_{i=1}^kf_{i,a}(-T^2)$.
We distinguish two cases:

If $\lambda_i(d,e,s)>0$ for some $i$ and either $e>1$ or $s=0$,
then $\lambda_{\pi_i(\sigma)}$ equals $\lambda_i$ for no $\sigma$, so $\#G(\underline{\lambda})=0$.
On the other side,
$\lambda_{f_{i,a}}=\lambda_i$ only if $\Delta(a)=0$,
so as ${\rm deg}(\Delta)$ is bounded only in terms of $n$ and $k$,
we get $\#A(\underline{\lambda})=O(q^{m-1})$,
see e.g.~\cite[Ch.~4 Lemma 3A]{Schmidt}.

If $\lambda_i(d,e,s)=0$ for all $i$ and all $(d,e,s)$ with $e>1$ or $s=0$, 
then we have embeddings $\Theta_{i,a}\colon{\rm Gal}(f_{i,a}|\mathbb{F}_q)\rightarrow\mathrm{H}_n$
and $\Theta_a\colon {\rm Gal}(\prod_if_{i,a}|\mathbb{F}_q)\rightarrow\mathrm{H}_n^k$,
and both Galois groups are generated by the Frobenius automorphism $\phi_q$.
Applying the Chebotarev theorem \cite[Theorem 5.1]{BBF} to the polynomial $g_A$ and 
the $G$-invariant set $G(\underline{\lambda})$ gives that
\begin{eqnarray}\label{eqn:Chebotarev}
 \#\{a\in\mathbb{F}_q^m : \Delta(a)\neq0\mbox{ and }\Theta_a(\phi_q)\in G(\underline{\lambda})\} = \frac{\#G(\underline{\lambda})}{\#G}≈Ω\cdot q^m + O(q^{m-1/2}).
\end{eqnarray}
Since $\Theta_{i,a}(\phi_q)=\pi_i(\Theta_a(\phi_q))$,
and 
$\lambda_{f_{i,a}}=\lambda_{\Theta_{i,a}(\phi_q)}$
by Lemma \ref{lem:lambda_f_Theta},
we get that the left hand side of $(\ref{eqn:Chebotarev})$ equals precisely 
$A(\underline{\lambda})$,
which gives the claim.
\end{proof}

Next we provide some group theoretical tools to determine $G$ in the case $f_{i,A}=f_0+h_i+\sum_{j=0}^mA_jT^j$,
needed to apply Proposition \ref{prop:groupcorrelation} for correlations in short intervals.

\subsection{Fiber products of hyperoctahedral groups}
\label{sec:fiber_products}

For a finite group $G$ we denote by $M(G)$ the intersection over the maximal normal subgroups of $G$. In the literature, $M(G)$ is sometimes called the \emph{Melnikov subgroup} of $G$.
It is the smallest normal subgroup of $G$ with quotient a direct product of simple groups.
We will use several times that subgroups of finite elementary abelian $p$-groups are again elementary abelian $p$-groups
and therefore products of simple groups.
We will also use that if $N\unlhd G$, then $M(N)\subseteq M(G)$, and if $f\colon G\rightarrow H$ is a homomorphism, then 
$f(M(G))= M(f(G))$,
cf.~\cite[Lemma 25.5.4]{FJ}.

\begin{lemma}\label{lem:Mproduct}
Let $(G_i)_{i\in I}$ be a family of finite groups and let $G:=\prod_{i\in I}G_i$.
Then $M(G)=\prod_{i\in I}M(G_i)$.
\end{lemma}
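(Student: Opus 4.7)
The plan is to prove the two inclusions $\prod_{i\in I} M(G_i) \subseteq M(G)$ and $M(G) \subseteq \prod_{i\in I} M(G_i)$ separately, using in each direction a different one of the two characterizations of $M(\cdot)$ recalled in the excerpt: as the intersection of maximal normal subgroups, or equivalently as the smallest normal subgroup with quotient a direct product of simple groups.

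For the inclusion $\prod_{i\in I} M(G_i) \subseteq M(G)$, I would identify each factor $G_j$ with the normal subgroup $\iota_j(G_j) = \{(g_i)_{i\in I}\in G : g_i = e \text{ for all } i\neq j\}$ of $G$. Since $\iota_j$ is an isomorphism onto its image, $\iota_j(M(G_j)) = M(\iota_j(G_j))$, and the stated fact ``$N \unlhd G \Rightarrow M(N) \subseteq M(G)$'' gives $\iota_j(M(G_j)) \subseteq M(G)$ for every $j$. The subgroups $\iota_j(M(G_j))$ for different $j$ commute elementwise and intersect trivially, and together they generate exactly (the image of) $\prod_{i\in I} M(G_i)$ inside $G$; hence this whole product lies in $M(G)$.

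For the reverse inclusion I would set $N := \prod_{i\in I} M(G_i)$, which is normal in $G$, and observe that the natural map gives $G/N \cong \prod_{i\in I} G_i/M(G_i)$. By the definition of $M(G_i)$, each $G_i/M(G_i)$ is a direct product of simple groups, so the product over $i$ remains a direct product of simple groups. Consequently $N$ is a normal subgroup of $G$ with semisimple quotient, and the minimality characterization of $M(G)$ forces $M(G) \subseteq N$.

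I do not anticipate a genuine obstacle here: the only care needed is the bookkeeping that identifies $G_j$ both as a quotient of $G$ (via $\pi_j$) and as a normal subgroup of $G$ (via $\iota_j$), and keeps the direct product $\prod_i M(G_i)$ coherent with its realization inside $G$. Both inclusions then reduce to one of the two stated structural properties of the Melnikov subgroup.
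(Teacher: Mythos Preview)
Your proposal is correct and follows essentially the same approach as the paper: both directions use exactly the two facts you cite, namely $G_i\unlhd G\Rightarrow M(G_i)\subseteq M(G)$ for one inclusion and the observation that $G/\prod_i M(G_i)\cong\prod_i G_i/M(G_i)$ is a product of simple groups for the other. The paper's argument is simply a terser version of yours, omitting the explicit $\iota_j$ bookkeeping.
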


\begin{proof}
For each $i$, $G_i\unlhd G$ implies $M(G_i)\subseteq M(G)$, and therefore $ \prod_{i\in I}M(G_i)\subseteq M(G)$. The other inclusion follows as  $G/\prod_{i\in I}M(G_i)=\prod_{i\in I}G_i/M(G_i)$ is a product of simple groups, so $M(G)=\prod_{i\in I}M(G_i)$.
\end{proof}

\begin{proposition}\label{prop:fullproduct}
Let $(G_i)_{i\in I}$ be a finite family of finite groups, put $G = \prod_{i\in I} G_i$, let $H\leq G$, and let $\pi_i\colon G\rightarrow G_i$ be the projection map.
If $HM(G)=G$ and $\pi_i(H)=G_i$ for all $i$, then $H=G$.
\end{proposition}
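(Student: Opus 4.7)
The plan is to prove the statement by induction on $|I|$. The base case $|I|=1$ is immediate, since $\pi_1(H)=G_1$ directly gives $H=G$. For the inductive step, fix $i_0\in I$, set $I'=I\setminus\{i_0\}$ and $G'=\prod_{i\in I'}G_i$, so $G=G'\times G_{i_0}$. Let $H':=\pi_{G'}(H)\leq G'$. The projection hypotheses are inherited by $H'$ for the indices in $I'$. Moreover, $HM(G)=G$ pushes forward under $\pi_{G'}$ to $H'\cdot \pi_{G'}(M(G))=G'$, and by Lemma~\ref{lem:Mproduct} we have $\pi_{G'}(M(G))=\prod_{i\in I'}M(G_i)=M(G')$, so $H'M(G')=G'$. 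The inductive hypothesis gives $H'=G'$, and the problem reduces to the two-factor case: $G=G'\times G_{i_0}$ with $H$ surjecting onto each factor and $HM(G)=G$.

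In this two-factor case, Goursat's lemma describes $H$ as a fibre product: there exist $A\unlhd G'$, $C\unlhd G_{i_0}$, and an isomorphism $\varphi\colon G_{i_0}/C\to Q:=G'/A$ with
\[
 H=\{(g',g_{i_0})\in G'\times G_{i_0}: g'A=\varphi(g_{i_0}C)\}.
\]
It suffices to prove $Q=1$, for then $A=G'$, $C=G_{i_0}$, and $H=G$.

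I would establish this by computing $\#(H\cap M(G))$ in two ways. From $HM(G)=G$ one gets
\[
 \#(H\cap M(G)) =\frac{\#H\cdot\#M(G)}{\#G}=\frac{\#M(G')\cdot\#M(G_{i_0})}{\#Q},
\]
using $\#H=\#G'\cdot\#G_{i_0}/\#Q$ and $M(G)=M(G')\times M(G_{i_0})$ (Lemma~\ref{lem:Mproduct}). On the other hand, by the Goursat description, $H\cap M(G)$ consists of pairs $(m_1,m_2)\in M(G')\times M(G_{i_0})$ with $m_1A=\varphi(m_2C)$. The functorial identity $f(M(G'))=M(Q)$, applied to the quotient map $f\colon G'\twoheadrightarrow Q$ (and likewise for $G_{i_0}$ via $\varphi$), shows that this common value lies in $M(Q)$, and that for each $q\in M(Q)$ there are exactly $\#M(G')/\#M(Q)$ lifts to $M(G')$ and $\#M(G_{i_0})/\#M(Q)$ lifts to $M(G_{i_0})$, yielding
\[
 \#(H\cap M(G))=\#M(Q)\cdot\frac{\#M(G')}{\#M(Q)}\cdot\frac{\#M(G_{i_0})}{\#M(Q)}=\frac{\#M(G')\cdot\#M(G_{i_0})}{\#M(Q)}.
\]
Comparing the two expressions forces $\#M(Q)=\#Q$, i.e.\ $M(Q)=Q$. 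Since every nontrivial finite group admits a maximal normal subgroup, this forces $Q=1$, hence $H=G$.

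The argument is essentially bookkeeping; the critical ingredient which makes the two counts agree precisely is the functoriality $f(M(G))=M(f(G))$ quoted just before Lemma~\ref{lem:Mproduct}, without which the counting would not close up.
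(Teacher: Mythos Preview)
Your proof is correct and shares the same overall architecture as the paper's: both reduce by induction to a two-factor product and then invoke Goursat's lemma together with the functoriality $f(M(G))=M(f(G))$. The only difference is in how the two-factor case is finished. The paper argues qualitatively: if the Goursat quotient $Q$ is nontrivial, pick a simple quotient $S=Q/M$; then $M(G_i)\subseteq\ker(G_i\to S)$, so $M(G)\subseteq G_1\times_S G_2$, whence $HM(G)\subseteq G_1\times_S G_2\subsetneq G$, a contradiction. You instead argue quantitatively, counting $\#(H\cap M(G))$ in two ways to force $\#M(Q)=\#Q$. Both routes exploit exactly the same structural fact about the Melnikov subgroup; the paper's containment argument is a shade more direct, while your counting argument is pleasantly self-contained and avoids choosing an auxiliary simple quotient.
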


\begin{proof}
Suppose first that $I=\{1,2\}$. Then by Goursat's lemma, there exist a group $Q$ and epimorphisms $f_i\colon G_i\rightarrow Q$ such 
that $H$ is the fiber product $G_1\times_Q G_2$. 
Let $M$ be a maximal normal subgroup of $Q$, put $S=Q/M$, and let $\pi\colon Q\rightarrow S$ be the quotient map. 
Then $H$ is contained in the fiber product $G_1\times_S G_2$. 
Since $M_i:={\rm ker}(\pi\circ f_i)$ is a maximal normal subgroup of $G_i$, 
we have 
$$
 M(G)=M(G_1)\times M(G_2)\subseteq M_1\times M_2\subseteq G_1\times_S G_2.
$$ 
Thus, 
$$
 HM(G)\subseteq G_1\times_S G_2\subsetneqq G_1\times G_2,
$$
contradicting the assumption $HM(G)=G$.
%\footnote{I think there should be a more elegant argument for this, possibly even avoiding Goursat's lemma.}
%\textsc{End}

The general case now follows by induction: Indeed, for $J\subseteq I$, put $G_J = \prod_{i\in J} G_j$, let $\pi_J\colon G\rightarrow G_J$ be the projection map,
and let $H_J:=\pi_J(H)$.
Then $\pi_i(H_J)=\pi_i(H)=G_i$ for all $i\in J$,
and 
$$
 H_JM(G_J)=\pi_J(H) M(\pi_J(G))\supseteq \pi_J(H M(G))=\pi_J(G)=G_J,
$$ 
so if $J=I\setminus\{i_0\}$, then the induction hypothesis can be applied to $G_J=\prod_{i\in J}G_i$ and $H_J$,
showing that $H_J=G_J$,
and then the case $I=\{1,2\}$ can be applied to $G=G_J\times G_{i_0}$ and $H$,
showing that $H=G$.
\end{proof}

Next we compute the Melnikov subgroups of the hyperoctahedral group and of one of its subgroups. 
\begin{lemma}\label{lem:MG}
%With notation as in \eqref{hyperoctahedralgroup} and \eqref{invariantsubgroups} and with 
Under the notation of \S\ref{sec:functions_on_groups}, if $n\geq 3$, then
\[
M(V\rtimes S_n)=M(V_{n-1}\rtimes S_n)=V_{n-1}\rtimes A_n.
\]
\end{lemma}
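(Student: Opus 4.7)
The plan is to pinpoint $V_{n-1} \rtimes A_n$ simultaneously as the commutator subgroup of both $V \rtimes S_n$ and $V_{n-1} \rtimes S_n$, and to argue that neither group admits a nonabelian simple quotient; once both statements are in hand, the identification with the Melnikov subgroup falls out in a couple of lines. Write $G$ for either of the two groups in question.

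First I would verify the ``easy'' containment $M(G)\subseteq V_{n-1}\rtimes A_n$ by checking that $G/(V_{n-1}\rtimes A_n)$ is elementary abelian of order $4$ (for $G=V\rtimes S_n$) or order $2$ (for $G=V_{n-1}\rtimes S_n$), hence a direct product of simple groups, so the defining property of $M(G)$ forces it into this normal subgroup. Next I would show $[G,G]=V_{n-1}\rtimes A_n$: the abelian quotient just computed gives $[G,G]\subseteq V_{n-1}\rtimes A_n$, while for the other direction I use $[S_n,S_n]=A_n$ together with explicit commutators $[\tau,v]=v+\tau(v)$ in $G$. For $G=V\rtimes S_n$ the choice $\tau=(i\,j)$, $v=e_i$ yields $e_i+e_j$; for $G=V_{n-1}\rtimes S_n$ one has to stay inside $V_{n-1}$, so I take $v=e_i+e_k$ with $k\notin\{i,j\}$, which exists precisely because $n\geq 3$ and again gives $e_i+e_j$. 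Since the vectors $e_i+e_j$ span $V_{n-1}$, the containment $V_{n-1}\rtimes A_n\subseteq[G,G]$ follows.

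The key remaining point is that $G$ has no nonabelian simple quotient: any such quotient $G/N$ must kill the abelian normal subgroup $V\cap G\in\{V_{n-1},V\}$ (its image would be an abelian normal subgroup of a nonabelian simple group, hence trivial), so $G/N$ is a quotient of $S_n$, and the normal subgroups of $S_n$ (namely $1,A_n,S_n$ for $n\geq 5$, and the manifestly non-simple-quotient list for $n=3,4$) show that $S_n$ has no nonabelian simple quotient. Consequently every maximal normal subgroup of $G$ has a quotient of prime order $p$, is thus abelian, and contains $[G,G]=V_{n-1}\rtimes A_n$. Intersecting over all such subgroups gives $M(G)\supseteq V_{n-1}\rtimes A_n$, which combined with the first step yields the claimed equality for both groups.

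The main obstacle is really step two, making sure the commutator computation for $G=V_{n-1}\rtimes S_n$ stays inside $V_{n-1}$, and this is exactly what forces the hypothesis $n\geq 3$; everything else is formal bookkeeping with the characterization of $M(G)$ as the intersection of maximal normal subgroups and with the standard fact that $S_n$ has no nonabelian simple quotient.
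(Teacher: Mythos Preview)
Your argument is correct, but it proceeds along a different line than the paper. The paper argues directly: for a maximal normal subgroup $N\lhd H$ it splits into the cases $W\subseteq N$ and $W\not\subseteq N$, and in the latter case uses the explicit list \eqref{invariantsubgroups} of $S_n$-invariant subspaces of $V$ to pin down $N\cap W$ and hence $N$. You instead identify $V_{n-1}\rtimes A_n$ with the derived subgroup $[G,G]$ via explicit commutators, and then observe that $G$ has no nonabelian simple quotient (the abelian normal subgroup $W$ must die in any such quotient, reducing to $S_n$); hence every maximal normal subgroup has abelian quotient and contains $[G,G]$. Your route avoids invoking the invariant-subspace classification and makes transparent where $n\geq 3$ enters (the choice of $v=e_i+e_k\in V_{n-1}$ with $k\notin\{i,j\}$), at the cost of a small detour through commutator calculus; the paper's route stays closer to the ambient structure already developed in \S\ref{sec:functions_on_groups} and actually enumerates the maximal normal subgroups rather than just bounding their intersection.
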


\begin{proof}
Let $W$ be either $V_n$ or $V_{n-1}$. Write 
\[
H=W\rtimes S_n \quad\mbox{and}\quad K=V_{n-1}\rtimes A_n\leq H.
\]
It is immediate that $M(H)\subseteq K$,
as $K$ is either a maximal normal subgroup of $H$, if $W=V_{n-1}$ 
or an intersection of such, if $W=V$.
To show the converse inclusion, 
we let $N$ be a maximal normal subgroup of $H$
and aim to show that $K\subseteq N$.
Let $A:=H/N$ and $U:=N\cap W$. Then $A$ is a finite simple group and  $U$ is an $S_n$-invariant subspace of $V$ that is contained in $W$. 

If $N$ does not contain $W$, then $WN=H$ from maximality. 
Thus, $W/U \cong H/N= A$.
As $W$ is a $2$-elementary abelian group, this implies that $A\cong \mathbb{Z}/2\mathbb{Z}$ and that $U$ has index $2$ in $W$. 
By \eqref{invariantsubgroups}, since $n\geq 3$ this can happen only if $W=V$ and $U=V_{n-1}$. 
Hence, from maximality $N = V_{n-1}\rtimes S_n$ and thus $K\subseteq N$. 

If $N$ contains $W$, then $N/W$ is a maximal normal subgroup of $H/W\cong S_n$. 
Since $S_n$ has a unique maximal normal subgroup, 
namely $A_n$, we conclude that $N=W\rtimes  A_n$ and so $N$ contains $K$. 
%
%\textsc{End}
%
%Let $G=V\rtimes S_n$, $H=V_{n-1}\rtimes S_n$ and $K=V_{n-1}\rtimes A_n$.
%%Let $G=V_{n-1}\rtimes S_n$ and 
%Observe that $M(H)$ contains the commutator subgroup $H'$ since every non-abelian finite simple quotient of $G$ would have to be a quotient of $S_n$.
%
%Since $S_n'=A_n$, clearly $A_n\leq H'\leq M(H)$.
%For $x\in V_{n-1}$ and $\sigma\in S_n$, the commutator $[\sigma,x]=x-x^\sigma$ is in  $H'$, 
%so if we choose $x$ with $w(x)=2$, then any other $y\in V$ with $w(y)=2$ is in $H'$, hence $H'\supseteq V_{n-1}$.\footnote{Alternatively use that
%$H'$ normal in $H$ implies that $H'\cap V_{n-1}$ is $S_n$-invariant, but $H/V_1\rtimes S_n$ is not abelian?}
%So $H'\supseteq K$, and since $K$ is normal in $H$ with quotient $S_n/A_n$ an abelian simple group, we conclude that $M(H)=H'=K$.
%Finally, since $H$ is normal in $G$, we have $M(G)\supseteq M(H)=K$,
%and since $K$ is normal in $G$ with $G/K=V/V_{n-1}\times S_n/A_n$ an elementary abelian $2$-group
%we get $M(G)=K$.\footnote{To get that $M(G)=G'=K$, we could also apply Corollary 4.9 of Meldrum, but I don't see a way to also deduce $H'$ from that.}
\end{proof}

%Next we compute the Melnikov subgroup of the fiber product of copies of $\mathrm{H}_n$  with respect to the total sign maps.
%Recall that for a finite set $I$, a group $Q$ 
%and groups $G_i$ with given homomorphisms $f_i\colon G_i\rightarrow Q$
%denote the corresponding fiber product by 
%$$
% {\prod_{i\in I}}_Q G_i:=\left\{ (g_i)_{i\in I}\in\prod_{i\in I}G_i : f_i(g_i)=f_j(g_j)\mbox{ for all }i,j\in I \right\} \leq\prod_{i\in I} G_i.
%$$
For a finite set $I$ we let $\mathrm{H}_n^{(I)}$ be the fiber product of copies of $\mathrm{H}_n$ with respect to $\ts$; i.e.,
$$
 \mathrm{H}_n^{(I)} := \left\{ (g_i)_{i\in I}\in\prod_{i\in I}\mathrm{H}_n : \ts(g_i)=\ts(g_j)\mbox{ for all }i,j\in I\right\},
$$
and we denote by $\pi_i\colon \mathrm{H}_n^{(I)}\rightarrow\mathrm{H}_n$ the $i$-projection map.
Note that
$$
 \prod_{i\in I} V_n\rtimes S_n\geq \mathrm{H}_n^{(I)} \geq \prod_{i\in I}V_{n-1}\rtimes S_n \geq \prod_{i\in I}V_{n-1}\rtimes A_n .
$$

\begin{lemma}\label{lem:MF}
%Let $I$ be a finite set and for each $i\in I$ define the series of subgroups $L_i\leq K_i\leq G_i$ so that $G_i=\mathrm{H}_n$ is the hyperoctahedral %group, $K_i = V_{n-1}\rtimes S_n$, and $L_i=V_{n-1}\rtimes A_n$. Let $G$, $K$, and $L$ be the direct product of the $G_i$, $K_i$, $L_i$, respectively %and let $\tilde{G}$ be the fiber product of the $G_i$ with respect to the total sign maps defined in \eqref{eq:ts}.
%Then
%\[
For $n\geq 3$ and any finite set $I$,
$$
 M(\prod_{i\in I} V_n\rtimes S_n) = M(\mathrm{H}_n^{(I)}) = M(\prod_{i\in I}V_{n-1}\rtimes S_n) = \prod_{i\in I}V_{n-1}\rtimes A_n.
$$ 
%\] 
\end{lemma}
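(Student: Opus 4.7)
Write $G_1:=\prod_{i\in I}\mathrm{H}_n$, $G_2:=\mathrm{H}_n^{(I)}$, $G_3:=\prod_{i\in I}(V_{n-1}\rtimes S_n)$, and $K:=\prod_{i\in I}(V_{n-1}\rtimes A_n)$, so that $G_1\supseteq G_2\supseteq G_3\supseteq K$. The plan is to dispose of $M(G_1)$ and $M(G_3)$ at once via the product formula, and then to pin down $M(G_2)$ by sandwiching it between $K$ and $K$.

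The outer equalities are immediate: Lemma~\ref{lem:Mproduct} together with Lemma~\ref{lem:MG} (applied to $V_n\rtimes S_n$ and to $V_{n-1}\rtimes S_n$, which is where the hypothesis $n\geq 3$ enters) yields $M(G_1)=M(G_3)=K$.

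For the inclusion $K\subseteq M(G_2)$, I would argue that $G_3$ is normal in $G_1$, since coordinatewise $V_{n-1}\rtimes S_n=\ker\chi$ is normal in $\mathrm{H}_n$; hence $G_3=G_3\cap G_2$ is normal in $G_2$ as well. The general fact $M(N)\subseteq M(G)$ for $N\unlhd G$, recalled at the beginning of Section~\ref{sec:fiber_products}, then gives $K=M(G_3)\subseteq M(G_2)$.

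For the reverse inclusion $M(G_2)\subseteq K$, I would check that $K$ is already normal in $G_1$: in each coordinate, $V_{n-1}\rtimes A_n$ is the intersection of the two normal subgroups $\ker\chi=V_{n-1}\rtimes S_n$ and $\ker(\sgn)=V_n\rtimes A_n$ of $\mathrm{H}_n$. Consequently the quotient $G_1/K$ is isomorphic to a product of copies of $\mathrm{H}_n/(V_{n-1}\rtimes A_n)\cong(\mathbb{Z}/2)^2$, i.e.\ an elementary abelian $2$-group; its subgroup $G_2/K$ is therefore again a direct product of copies of $\mathbb{Z}/2$, which is a direct product of (abelian) simple groups. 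By the defining property of the Melnikov subgroup as the \emph{smallest} normal subgroup with quotient a direct product of simple groups, this forces $M(G_2)\subseteq K$, completing the chain of equalities. I do not foresee a serious obstacle; the only checks that require a moment of care are the normality of $K$ in $G_1$ and the identification of $\mathrm{H}_n/(V_{n-1}\rtimes A_n)$ as a Klein four-group, both of which reduce to the coordinatewise analysis already carried out in Lemma~\ref{lem:MG}.
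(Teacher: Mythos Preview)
Your proof is correct and follows essentially the same strategy as the paper: compute $M(G_1)$ and $M(G_3)$ via Lemmas~\ref{lem:Mproduct} and~\ref{lem:MG}, then sandwich $M(G_2)$ between them. The only difference is in the upper bound $M(G_2)\subseteq K$: the paper observes that $G_1/G_3$ is abelian, so $G_2\unlhd G_1$, and then applies the general fact $M(G_2)\subseteq M(G_1)=K$; you instead argue directly that $G_2/K$ is a subgroup of the elementary abelian $2$-group $G_1/K$ and hence itself a product of simple groups. Both are fine and equally short.
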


\begin{proof}
Let 
\begin{equation}\label{eq:defining_groups}
\Gamma=\prod_{i\in I}V_n\rtimes S_n,\quad N=\prod_{i\in I}V_{n-1}\rtimes S_n, \quad \mbox{and}\quad K=\prod_{i\in I}V_{n-1}\rtimes A_n.
\end{equation}
As $N$ is normal in $G$  with abelian quotient and $\chi(x)=1$ for each $x\in V_{n-1} \rtimes S_n$, we have that $N\unlhd \mathrm{H}_n^{(I)}\unlhd \Gamma$. Hence $M(N)\leq M(\mathrm{H}_n^{(I)})\leq M(\Gamma)$.
Lemmas~\ref{lem:Mproduct} and \ref{lem:MG} give that $M(\Gamma)=M(N)=K$,
which thus proves the claim.
%. So 
%\[
%K=M(N)\leq M(\mathrm{H}_n^I)\leq M(G)=K,
%\]
%and hence $M(\mathrm{H}_n^I)= K$.
%
%Let $G=\prod_{i\in I} V\rtimes S_n$, $F=\prod_{(f_i)_{i\in I}} V\rtimes S_n$, $H=\prod_{i\in I} V_{n-1}\rtimes S_n$ and $K=\prod_{i\in I} V_{n-1}\rtimes A_n$.
%By Lemma \ref{lem:MG}, $M(G)=\prod_{i\in I} M(V\rtimes S_n)=K$ and $M(H)=\prod_{i\in I} M(V_{n-1}\rtimes S_n)=K$.
%Since $H$ is normal in $G$ and therefore also in $F$, we have $M(F)\supseteq M(H)=K$.
%Since $K$ is normal in $G$ and therefore also in $F$, and $F/K\leq G/K=\prod_{i\in I} V/V_{n-1}\times S_n/A_n$ is an elementary abelian $2$-group, 
%$M(F)\subseteq K$.
\end{proof}

%\begin{lemma}
%Let $N\unlhd V\rtimes S_n$ with $N\leq V_{n-1}\rtimes S_n$ and $NV=G$. Then $N=V_{n-1}\rtimes S_n$.
%\end{lemma}

%\begin{proof}
%Since $G/N=NV/N\cong V/V\cap N$ is an elementary abelian $2$-group, $N\supseteq M(V\rtimes S_n)=V_{n-1}\rtimes A_n$.
%So $N\subseteq V_{n-1}\rtimes S_n$ shows that $N$ is equal to either $V_{n-1}\rtimes A_n$ or $V_{n-1}\rtimes S_n$,
%and $NV=G$ rules out the former.
%\end{proof}

We prove an analogue of Proposition~\ref{prop:fullproduct} for fiber products of $\mathrm{H}_n$.

\begin{proposition}\label{prop:fullfiberproduct}
Fix a finite set $I$, put $G=\mathrm{H}_n^{(I)}$, and let $H\leq G$.
If $H M(G)=G$ and $\pi_i(H)= \mathrm{H}_n$ for all $i\in I$, then $H=G$.
\end{proposition}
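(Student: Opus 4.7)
The plan is to reduce to Proposition~\ref{prop:fullproduct} applied inside the normal subgroup $N := \prod_{i \in I}(V_{n-1}\rtimes S_n)$ of $G$, via the intermediate subgroup $H \cap N$.

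First I would observe two structural facts. Since $G = \mathrm{H}_n^{(I)}$ is the fiber product over the total sign homomorphism $\chi$, we have $G/N \cong \mathbb{F}_2$; and by Lemma~\ref{lem:MF}, $M(G) = \prod_{i\in I} V_{n-1}\rtimes A_n \subseteq N$. Together these imply that the hypothesis $HM(G) = G$ forces $HN = G$, so $[H : H\cap N] = [G:N] = 2$.

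Next I would verify the two hypotheses of Proposition~\ref{prop:fullproduct} for the subgroup $H\cap N$ sitting inside the direct product $N = \prod_{i\in I}(V_{n-1}\rtimes S_n)$. For the hypothesis $(H\cap N)\,M(N) = N$, note that $M(N) = M(G)$ by Lemma~\ref{lem:MF}; then for any $x \in N$ write $x = hm$ with $h \in H$, $m \in M(G) \subseteq N$, so $h = xm^{-1} \in H\cap N$. For the hypothesis $\pi_i(H\cap N) = V_{n-1}\rtimes S_n$, the inclusion $\subseteq$ is immediate; for the reverse, lift $v \in V_{n-1}\rtimes S_n$ to some $h \in H$ via the surjection $\pi_i\colon H \to \mathrm{H}_n$, and observe that $\chi(\pi_i(h)) = \chi(v) = 1$ forces $\chi(\pi_j(h)) = 1$ for every $j \in I$ by the fiber product condition defining $G$, whence $h \in N$ and thus $h \in H\cap N$.

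Applying Proposition~\ref{prop:fullproduct} to $N$ and $H\cap N$ then yields $H\cap N = N$, i.e.\ $N \subseteq H$. Since $[G:N] = 2$ and $H \neq N$ (as $\pi_i(H) = \mathrm{H}_n \neq V_{n-1}\rtimes S_n$), we conclude $H = G$. The subtle step is the verification that $\pi_i(H\cap N) = V_{n-1}\rtimes S_n$: this is precisely where the fiber product structure (as opposed to a mere direct product) is used, since the argument relies on the fact that $\chi\circ\pi_i$ and $\chi\circ\pi_j$ coincide on $G$.
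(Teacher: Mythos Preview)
Your proof is correct and follows essentially the same approach as the paper: both set $H_0=H\cap N$ with $N=\prod_{i\in I}V_{n-1}\rtimes S_n$, verify the two hypotheses of Proposition~\ref{prop:fullproduct} for $H_0\leq N$ using Lemma~\ref{lem:MF}, and conclude $H=G$ from $N\subseteq H$ together with $[G:N]=2$. Your verification of $\pi_i(H\cap N)=V_{n-1}\rtimes S_n$ via the fiber product condition is slightly more explicit than the paper's index argument, but the overall strategy is the same.
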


\begin{proof}
Let $N$ and $K$ be as in \eqref{eq:defining_groups}. 
%
%=\prod_{i\in I}V_{n-1}\rtimes S_n$ and $K=\prod_{i\in I}V_{n-1}\rtimes A_n$.
By Lemma \ref{lem:MF}, $M(G)=M(N)=K$.
%Since $N$ is a maximal normal subgroup of $G$ with quotient group $G/N \cong \mathbb{F}_2$, 
The assumption $HK=HM(G)=G$ implies an isomorphism  $G/K \cong H/H\cap K$. In particular, if we put $H_0=H\cap N$, then as $N$ is a normal in $G$, $H_0$ is normal in $H$ with $H/H_0\cong G/N\cong \mathbb{F}_2$ and we have $N=H_0K$.

Since 
$\pi_i(H)=\pi_i(G)$ and $[\pi_i(G):\pi_i(N)]=2$, we conclude that $\pi_i(H_0)=\pi_i(N)$.
Therefore, Proposition \ref{prop:fullproduct} gives that $H_0=N$,
from which we conclude that $H=G$.
\end{proof}

We shall need the following notation for later use: For a finite family $\mathcal{I}=(I_j)_{j\in J}$ of nonempty finite sets
%and let $G_i=V\rtimes S_n$ and $f_i:G_i\rightarrow G_i/V_{n-1}\rtimes S_n\cong\mathbb{F}_2$.
we denote
$$
 \mathrm{H}_n^\mathcal{I} := \prod_{j\in J}\mathrm{H}_n^{(I_j)} \leq \prod_{j\in J}\prod_{i\in I_j}\mathrm{H}_n.
$$

\subsection{The Galois group of correlations in short intervals}
\label{sec:Galois_group}

Let $F$ be a field of characteristic different from $2$, let $n>m\geq 2$ and $r>0$, and let $K=F(A_0,\dots,A_m)$.
Let $f_0\in F[T]$ be monic of degree $n$ and $h_1,\dots,h_k\in F[T]$ of degree less than $n$ and pairwise distinct.
Starting from
$$
 f(T) = f_0(T) + \sum_{i=0}^mA_iT^i \in K[T],
$$
we define $f_i(T) = f(T)+h_i(T)$ and $f_h(T) = \prod_{i=1}^k f_i(T)$.
Note that
$$
 f_i(0) = f_0(0) + A_0 + h_i(0).
$$
The goal of this section is to determine the Galois group of $f_h(-T^2)$.

If $\omega_1,\dots,\omega_n$ is an enumeration of the roots of $f_i$, then
the roots of $f_i(-T^2)$ are $\pm \rho_1,\dots,\pm\rho_n$,
where $\rho_i$ is a fixed square root of $-\omega_i$.
The Galois action induces embeddings ${\rm Gal}(f_i(T)|K)\hookrightarrow S_n$ and $\Theta_i\colon {\rm Gal}(f_i(-T^2)|K)\hookrightarrow\mathrm{H}_n=V\rtimes S_n$
as in  \eqref{eq:defining_groups}.
%where the elements of $S_n$ act by permuting the roots of $f_i(T)$
%and the elements of $V$ act by sign changes on the roots of $f_i(-T^2)$,
%cf.~\cite[Lemma 3.1]{BBF}.

\begin{proposition}\label{prop:isoms}
For each $i$, the action on the roots of $f_i(T)$  and $\Theta_i$ induce isomorphisms 
$$
 {\rm Gal}(f_i(T)|K)\cong S_n\quad\mbox{ and }\quad {\rm Gal}(f_i(-T^2)|K)\cong\mathrm{H}_n=V\rtimes S_n.
$$ 
Here, the extensions of $K$ corresponding to the subgroups $V\rtimes A_n$ and $V_{n-1}\rtimes S_n$ of $\mathrm{H}_n$ are
$K(\sqrt{{\rm discr}(f_i)})$ and $K(\sqrt{f_i(0)})$, respectively.
\end{proposition}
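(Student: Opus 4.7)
The plan is to reduce to the case $h_i=0$ by absorbing $h_i$ into $f_0$ (so $f_0+h_i$ is monic of degree $n$), and then to show that $\Theta_i$ is surjective; the $S_n$-isomorphism will follow automatically, since the composition of $\Theta_i$ with the quotient $\mathrm{H}_n \to \mathrm{H}_n/V = S_n$ realizes the tautological embedding of ${\rm Gal}(f_i(T)|K)$ via its action on roots.

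For ${\rm Gal}(f_i(T)|K)\cong S_n$, I exploit that $f_i$ is linear in $A_0$ with leading coefficient $1$ in $F[A_1,\ldots,A_m,T][A_0]$; Gauss's lemma yields irreducibility over $K[T]$. Geometrically, the splitting field is the Galois closure of the degree-$n$ polynomial cover $\tilde g\colon \mathbb{A}^1\to\mathbb{A}^1$ with $\tilde g(T) := f_i(T)-A_0 = (f_0+h_i)(T)+\sum_{j=1}^m A_jT^j$. The hypothesis $m\geq 2$ gives enough free coefficients to make $\tilde g$ generically Morse over $\bar F(A_1,\ldots,A_m)$, i.e., $\tilde g'$ has $n-1$ simple roots and $\tilde g$ takes pairwise distinct values at them; the monodromy of such a cover is $S_n$.

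Now put $H := \Theta_i({\rm Gal}(f_i(-T^2)|K))\leq \mathrm{H}_n$. The previous step shows $H$ projects onto $S_n$, so $H\cap V$ is $S_n$-invariant and hence lies in $\{V_0,V_1,V_{n-1},V\}$ by (\ref{invariantsubgroups}). The element $f_i(0) = A_0+f_0(0)+h_i(0)$ is a non-square in the UFD $F[A_0,\ldots,A_m]$ (being linear in $A_0$), so the total sign $\chi$ is surjective on $H$, i.e., $H\cap V \not\subseteq V_{n-1}$; this excludes $V_0$ and $V_{n-1}$, and also $V_1$ when $n$ is even (since then $V_1\subset V_{n-1}$). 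The remaining case, $n$ odd with $H\cap V=V_1$, corresponds via Kummer theory---using $(\prod_j\rho_j)^2 = \prod_j(-\omega_j) = f_i(0)$---to $-\omega_j f_i(0) \in K_n^{*2}$ for every $j$; this can be excluded by a specialization argument in the style of \cite{BBF}, forcing $H=\mathrm{H}_n$. This last step is the main technical obstacle; everywhere else the argument is essentially bookkeeping using the $S_n$-invariant structure of $V$.

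For the two quadratic subfields, write $\sigma = x\tau \in \mathrm{H}_n$ with $x\in V$ and $\tau \in S_n$. Then $\sigma$ acts on $\prod_j \rho_j$ as multiplication by $\prod_j(-1)^{x_j}=\chi(\sigma)$ (the permutation leaves the overall product invariant), and since $(\prod_j\rho_j)^2 = f_i(0)$, the fixed field of $\ker\chi = V_{n-1}\rtimes S_n$ is $K(\sqrt{f_i(0)})$. Likewise, $\sigma$ acts on $\prod_{j<k}(\omega_j-\omega_k)$ only through $\tau$ (the $V$-part fixes each $\omega_j$), by the scalar ${\rm sgn}(\tau)$, so the fixed field of $V\rtimes A_n$ is $K(\sqrt{{\rm discr}(f_i)})$.
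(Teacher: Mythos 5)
The paper's proof of the two isomorphisms is a pointer to \cite[Proposition~4.6]{BBF}; only the identification of the two quadratic subfields is argued in place. Your computation of those subfields is correct and parallels the paper's (the paper reduces the $V\rtimes A_n$ case to the classical $A_n\leq S_n$ fact via \cite{Milne}, while you compute directly with $\prod_{j<k}(\omega_j-\omega_k)$; both are fine).

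For the isomorphisms themselves, however, your argument has a genuine gap, over and above the step you yourself flag as unfinished. After establishing that $H$ projects onto $S_n$, you assert that surjectivity of $\chi$ on $H$ is equivalent to $H\cap V\not\subseteq V_{n-1}$ (``\emph{so the total sign $\chi$ is surjective on $H$, i.e., $H\cap V\not\subseteq V_{n-1}$}''). This is false. Take $H=\{x\tau\in\mathrm{H}_n:\chi(x)=\sgn(\tau)\}$: it projects onto $S_n$, $\chi|_H$ is surjective, yet $H\cap V=V_{n-1}$. The correct surjectivity criterion is not that $\chi|_H$ alone be onto, but that $H\cdot M(\mathrm{H}_n)=\mathrm{H}_n$ with $M(\mathrm{H}_n)=V_{n-1}\rtimes A_n$ (Lemma~\ref{lem:MG}), which for a subgroup surjecting onto $S_n$ amounts to the joint surjectivity of $(\chi,\sgn)\colon H\to\mathbb{F}_2^2$; in other words you need the square-independence of $f_i(0)$ \emph{and} $\discr(f_i)$ (cf.\ Lemma~\ref{lem:squareindep}), not merely the non-squareness of $f_i(0)$. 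Without that, your case analysis on $H\cap V$ does not get started, and even granting it you leave the decisive exclusion (odd $n$, $H\cap V=V_1$) as ``the main technical obstacle'' rather than proving it. As it stands, then, the proposal does not actually prove $\Theta_i$ is onto; the paper sidesteps all of this by citing \cite[Proposition~4.6]{BBF}, where the surjectivity is established.
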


\begin{proof}
See \cite[Proposition 4.6]{BBF} for the claimed isomorphisms.
The subgroup $V\rtimes A_n$ of $V\rtimes S_n$ corresponds 
to the same extension as the subgroup $A_n$ of $S_n$, namely to $K(\sqrt{{\rm discr}(f_i)})$, cf.~\cite[Corollary 4.2]{Milne}.
Since $f_i(0)=(-1)^n\omega_1\cdots\omega_n=(\rho_1\cdots\rho_n)^2$, the splitting field of $f_i(-T^2)$ contains $K(\sqrt{f_i(0)})$.
Since the elements of $V_{n-1}$ act on the roots of $f_i(-T^2)$ by an even number of sign changes, 
$\sqrt{f_i(0)}=\rho_1\cdots\rho_n$ is invariant under $V_{n-1}\rtimes S_n$,
which implies that $K(\sqrt{f_i(0)})$ is the field extension of $K$ corresponding to the subgroup $V_{n-1}\rtimes S_n$ of $\mathrm{H}_n$.
\end{proof}

As before we have the maps 
\begin{equation*}
\xymatrix@1{
 \Psi_i\colon {\rm Gal}(f_h(-T^2)|K)\ar[r]^-{ {\rm res}_i}&
	{\rm Gal}(f_i(-T^2)|K)\ar[r]^-{\Theta_i}&
	\mathrm{H}_n}, \quad i=1,\ldots, k,
\end{equation*}
which induce an embedding 
\begin{equation}\label{Psi}
\Psi\colon {\rm Gal}(f_h(-T^2)|K)\to \mathrm{H}_n^k.
\end{equation}
Denote by $H$ the image of $\Psi$, $G=\prod_{i=1}^k\mathrm{H}_n$, and  $\pi_i\colon G\rightarrow\mathrm{H}_n$ the projection onto the $i$-th factor.  Proposition~\ref{prop:isoms} implies that 
\begin{equation} \label{pi(h)=pi(G)}
\pi_i(H)=\pi_i(G) = \mathrm{H}_n, \qquad \mbox{ for all $i=1,\ldots, k$.}
\end{equation}
%Denote by $L_i$ the splitting field of $f_i(T)$ and by $M_i$ the splitting field of $f_i(-T^2)$.

%\begin{proposition}
%${\rm Gal}(f_\mathbf{h}(T))\cong\prod_{i=1}^r S_n$
%\end{proposition}

%\begin{lemma}
%The fixed field of $M({\rm Gal}(f_i(-T^2)|K)$ in the splitting field of $f_i(-T^2)$ is $K(\sqrt{f_i(0)},\sqrt{{\rm discr}(f_i)})$.
%\end{lemma}

Let 
\begin{align}
 J&=\{h_i(0):i=1,\dots,k\},\\
 I_a&=\{i:h_i(0)=a\},& \mbox{ for }a\in J,\\
 \mathcal{I}_h &= (I_a)_{a\in J}.
 \label{eqn:def_I_h}
\end{align}

Recall that a family $(x_1,\dots,x_s)$ of elements of $K^\times$ is {\em square-independent} if the residues in $K^\times/K^{\times 2}$ are $\mathbb{F}_2$-independent. By Kummer theory, this is equivalent to
$$
 {\rm Gal}(K(\sqrt{x_1},\dots,\sqrt{x_s})|K)\cong(\mathbb{F}_2)^s.
$$ 

\begin{lemma}\label{lem:squareindep}
The family 
$$
 (f_0(0)+A_0+a)_{a\in J}\cup({\rm discr}(f_i))_{i=1,\dots,k}
$$
is square-independent in $K$.
\end{lemma}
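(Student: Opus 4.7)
The plan is to split the family into the linear polynomials $L_a := f_0(0)+A_0+a$ for $a\in J$ and the discriminants ${\rm discr}(f_i)$ for $i=1,\dots,k$, handling the former by valuation theory in the UFD $R:=F[A_0,\dots,A_m]$ and the latter by Galois theory.

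For the first part, each $L_a$ is linear in $A_0$, hence an irreducible element of $R$, and for distinct $a,a'\in J$ the polynomials $L_a$ and $L_{a'}$ differ by a nonzero constant, so they are non-associate primes. Let $v_a$ denote the $L_a$-adic valuation on $K$, so that $v_a(L_b)=\delta_{ab}$. The key technical step is then to show that $v_a({\rm discr}(f_i))$ is even for every $a\in J$ and every $i$. I would prove this by passing to the residue field $R/(L_a)\cong F[A_1,\dots,A_m]$: the substitution $A_0\mapsto-f_0(0)-a$ sends $f_i$ to a polynomial $\bar f_i\in F(A_1,\dots,A_m)[T]$ whose constant term is $h_i(0)-a$. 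If $i\notin I_a$, then $\bar f_i(0)\neq 0$, and a short argument exploiting the free parameters $A_1,\dots,A_m$ (the assumption $m\geq 2$ providing more than enough freedom) shows that ${\rm discr}(\bar f_i)\neq 0$ in $F[A_1,\dots,A_m]$. If $i\in I_a$, then $\bar f_i=T\bar g_i$ with $\deg\bar g_i=n-1$ and $\bar g_i(0)=f_0'(0)+h_i'(0)+A_1\neq 0$, so the identity ${\rm discr}(T\bar g_i)=\bar g_i(0)^2\cdot{\rm discr}(\bar g_i)$ reduces matters to the analogous separability statement for $\bar g_i$. In either case $v_a({\rm discr}(f_i))=0$, and applying $v_a$ to any supposed relation $\prod_a L_a^{\epsilon_a}\prod_i{\rm discr}(f_i)^{\delta_i}\in K^{\times 2}$ forces $\epsilon_a\equiv 0\pmod 2$, hence $\epsilon_a=0$.

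The problem is thereby reduced to showing that $({\rm discr}(f_i))_{i=1}^k$ is square-independent in $K$. By Proposition~\ref{prop:isoms}, $K(\sqrt{{\rm discr}(f_i)})$ is the unique quadratic subextension of the splitting field $M_i$ of $f_i(T)$ over $K$, with ${\rm Gal}(M_i|K)\cong S_n$, so square-independence is equivalent to the surjectivity of
\begin{equation*}
{\rm Gal}(M_1\cdots M_k|K)\twoheadrightarrow\prod_{i=1}^k S_n/A_n\cong(\mathbb{F}_2)^k,
\end{equation*}
which in turn follows from the stronger identity ${\rm Gal}(\prod_i f_i(T)\,|\,K)=S_n^k$.

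This last Galois computation is the main obstacle. I would attack it by induction on $k$: assuming ${\rm Gal}(\prod_{i<k}f_i\,|\,K)=S_n^{k-1}$, one must show that $f_k$ has Galois group $S_n$ over the splitting field $M_{<k}$ of $\prod_{i<k}f_i$. A natural strategy is a ramification comparison in $\mathbb{A}^{m+1}={\rm Spec}\,R$: the ramification of $M_k/K$ is concentrated on the discriminant hypersurface of $f_k$, which for our family one expects to be an irreducible hypersurface distinct from those of the $f_i$ for $i<k$ (the differences $f_k-f_i=h_k-h_i$ being nonzero elements of $F[T]$), so that $M_k$ picks up ramification outside that of $M_{<k}$ and linear disjointness follows. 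Carrying this out rigorously --- in particular, establishing the irreducibility and pairwise distinctness of the relevant discriminant loci in $\mathbb{A}^{m+1}$ --- is the delicate heart of the argument.
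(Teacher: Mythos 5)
Your decomposition of the problem is the same as the paper's: isolate the linear forms $L_a=f_0(0)+A_0+a$, show that their valuations on the discriminants vanish (the paper proves exactly this appeal using \cite[Lemma~4.5]{BBF} after writing $f_i=\tilde f_i+A$), and thereby reduce to square-independence of the family $({\rm discr}(f_i))_i$. Your residue-field computation for the linear part is sound in spirit and essentially parallels the paper, though the paper finds it slightly cleaner to work in the PID $R=F(A_1,\dots,A_m)[A_0]$ rather than in the full polynomial ring $F[A_0,\dots,A_m]$: pairwise coprime non-squares in a UFD are square-independent in the fraction field, and $R$ being a PID streamlines the coprimality bookkeeping.

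The genuine gap is the second half. What the paper does at this point is quote \cite[Proposition~2.1]{BB}, which asserts directly that the ${\rm discr}(f_i)$ are non-squares and pairwise coprime in $R=F(A_1,\dots,A_m)[A_0]$; that is the entire content of the ``main obstacle'' you identify. You propose instead to re-derive the stronger statement ${\rm Gal}(\prod_i f_i\,|\,K)=S_n^k$ by an inductive ramification comparison over $\mathbb{A}^{m+1}$, but you explicitly leave its delicate heart --- irreducibility and pairwise distinctness of the discriminant loci --- unproved, and indeed it is not straightforward: a discriminant hypersurface need not be irreducible a priori, and distinctness of the loci from ``$h_k-h_i\neq 0$'' requires an argument, since the $f_i$ are translates of one another in coefficient space (the relevant information is encoded in how $h_k-h_i$ sits relative to the $m+1$ free coordinates). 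As it stands, the proposal replaces the paper's one-line citation with an unfinished and harder program. To close the gap without reinventing \cite{BB}, you could simply cite \cite[Proposition~2.1]{BB}; alternatively, if you want a self-contained ramification-style proof, the cleaner route is to work over $F(A_1,\dots,A_m)$ with a single affine parameter $A_0$, so that the relevant discriminants are univariate in $A_0$ and pairwise coprimality can be checked by showing they have no common root of $A_0$, which is a more concrete statement than irreducibility of a multi-dimensional hypersurface.
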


\begin{proof}
By \cite[Proposition 2.1]{BB}, the ${\rm discr}(f_i)$ are non-squares and pairwise coprime in the ring $R=F(A_1,\dots,A_m)[A_0]$.
For $a\in J$ and $i=1,\ldots, k$,  writing $f_i$ as $\tilde{f}_i+A$ with $A=f_0(0)+A_0+a$ and $\tilde{f}_i\in F(A_1,\dots,A_m)[T]$, then $\tilde{f}_i$ is  separable and 
\cite[Lemma~4.5]{BBF} gives that $A$ does not divide ${\rm discr}(f_i)$ in $R$.
Since $A$ is a prime element in the UFD $R$, for each $a\in J$, 
this shows that the family $(f_0(0)+A_0+a)_{a\in J}\cup({\rm discr}(f_i))_{i=1,\dots,r}$
consists of pairwise coprime non-squares in $R$,
which implies that it is square-independent in the fraction field $K$ of $R$.
\end{proof}

%\begin{proposition}
%Let $F$ be a field of characteristic different from $2$, let $n>m\geq 2$ and $r>0$, and let $K=F(A_0,\dots,A_m)$.
%Let $f_0\in F[T]$ be monic of degree $n$ and $h_1,\dots,h_r\in F[T]$ of degree less than $m$ and pairwise distinct.
%Let $J=\{h_i(0):i=1,\dots,r\}$, let $I_j=\{i:h_i(0)=j\}$ for $j\in J$, and $\mathcal{I}=(I_j)_{j\in J}$.
%Then, with
%$$
% f(T) = f_0(T) + \sum_{i=0}^mA_iT^i \in K[T],
%$$
%the polynomial
%$$
% f_h(T) = \prod_{i=1}^r(f(T)+h_i(T))
%$$
%satisfies
%$$
% {\rm Gal}(f_h(T)|K) \cong \prod_{j\in J}\prod_{i\in I_j} S_n\quad\mbox{ and }\quad {\rm Gal}(f_h(-T^2)|K)) \cong \mathrm{H}_n^\mathcal{I}
%$$
%\end{proposition}

For each $i$, let $K_i=K(\sqrt{f_i(0)})=K(\sqrt{f_0(0)+A_0+h_i(0)})$ and let 
$$
 {\rm res}_{K_i}\colon {\rm Gal}(f_i(-T^2)|K)\rightarrow{\rm Gal}(K_i|K)
$$ 
denote the corresponding restriction map.

\begin{proposition}
\label{prop:Galois_group_short_interval}
The map $\Psi$ given in \eqref{Psi} induces an isomorphism
\[
 {\rm Gal}(f_h(-T^2)|K) %= \prod_{a\in J}\prod_{({\rm res}_{K_i})_{i\in I_a}}{\rm Gal}(f_i(-T^2)|K) 
\cong  \mathrm{H}_n^{\mathcal{I}_h},
\]
where $\mathcal{I}_h$ is defined in \eqref{eqn:def_I_h}.
\end{proposition}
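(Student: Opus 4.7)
The strategy is to verify the hypotheses of Propositions~\ref{prop:fullfiberproduct} and \ref{prop:fullproduct} for the image $H = \Psi({\rm Gal}(f_h(-T^2)|K))$ inside $\mathrm{H}_n^{\mathcal{I}_h}$, with the Kummer-theoretic computation of the Melnikov quotient as the main content.

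\textbf{Step 1: Containment $H \subseteq \mathrm{H}_n^{\mathcal{I}_h}$.} By Proposition~\ref{prop:isoms}, the fixed field of the kernel of $\chi \circ \Theta_i$ in the splitting field of $f_i(-T^2)$ is $K(\sqrt{f_i(0)})$, and $f_i(0) = f_0(0) + A_0 + h_i(0)$ depends on $i$ only through $h_i(0)$. Hence for $i,j$ in the same class $I_a$, the characters $\chi \circ \Psi_i$ and $\chi \circ \Psi_j$ of ${\rm Gal}(f_h(-T^2)|K)$ coincide, which is precisely the defining condition of the fiber product $\mathrm{H}_n^{\mathcal{I}_h}$.

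\textbf{Step 2: $H \cdot M(\mathrm{H}_n^{\mathcal{I}_h}) = \mathrm{H}_n^{\mathcal{I}_h}$.} Lemmas~\ref{lem:Mproduct} and \ref{lem:MF} give
$$M(\mathrm{H}_n^{\mathcal{I}_h}) = \prod_{a \in J} M(\mathrm{H}_n^{(I_a)}) = \prod_{i=1}^k (V_{n-1} \rtimes A_n),$$
so the quotient $\mathrm{H}_n^{\mathcal{I}_h}/M(\mathrm{H}_n^{\mathcal{I}_h})$ is an elementary abelian $2$-group of order $2^{|J|+k}$, whose character group is generated by the $|J|$ total-sign maps $\chi \circ \pi_i$ (one per class) and the $k$ individual sign-of-permutation maps. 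By Proposition~\ref{prop:isoms}, the fixed field of $H \cap M(\mathrm{H}_n^{\mathcal{I}_h})$ inside the splitting field of $f_h(-T^2)$ is the compositum $L = K\bigl(\sqrt{f_0(0)+A_0+a}: a \in J,\ \sqrt{{\rm discr}(f_i)} : 1 \le i \le k\bigr)$. Lemma~\ref{lem:squareindep} and Kummer theory then yield $[L:K] = 2^{|J|+k}$, forcing $H$ to surject onto $\mathrm{H}_n^{\mathcal{I}_h}/M(\mathrm{H}_n^{\mathcal{I}_h})$, as required.

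\textbf{Step 3: Closing the argument.} For each $a \in J$, restrict Step 2 to the projection $\pi_{I_a}(H) \leq \mathrm{H}_n^{(I_a)}$; combined with \eqref{pi(h)=pi(G)} for the individual components, Proposition~\ref{prop:fullfiberproduct} yields $\pi_{I_a}(H) = \mathrm{H}_n^{(I_a)}$. Then Proposition~\ref{prop:fullproduct} applied to the product $\mathrm{H}_n^{\mathcal{I}_h} = \prod_{a \in J} \mathrm{H}_n^{(I_a)}$, using the global Melnikov-surjectivity from Step 2, gives $H = \mathrm{H}_n^{\mathcal{I}_h}$.

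The principal obstacle is the Kummer-theoretic identification in Step 2: one must carefully match the $|J|+k$ generators of the character group of the Melnikov quotient with the square classes in $K$ shown to be independent by Lemma~\ref{lem:squareindep}, checking that the relations forced on $\chi \circ \pi_i$ within a class $I_a$ by the fiber product structure correspond exactly to the equality $f_i(0) = f_j(0)$ for $i,j \in I_a$. Once this matching is in place, the remaining steps are direct applications of the preparatory group-theoretic and Galois-theoretic results.
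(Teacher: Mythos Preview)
Your proposal is correct and follows essentially the same approach as the paper: establish the containment $H\subseteq\mathrm{H}_n^{\mathcal{I}_h}$ via the common subfield $K(\sqrt{f_i(0)})$, compute $M(\mathrm{H}_n^{\mathcal{I}_h})$ from Lemmas~\ref{lem:Mproduct} and~\ref{lem:MF}, use Lemma~\ref{lem:squareindep} to verify the Melnikov-surjectivity, and close with Propositions~\ref{prop:fullfiberproduct} and~\ref{prop:fullproduct}. The only difference is organizational: the paper first treats the case $\#J=1$ in full and then uses it as input for the general case, whereas you establish the global Melnikov-surjectivity once in Step~2 and restrict it to each block $I_a$ in Step~3 --- which works because $\pi_{I_a}$ carries $M(\mathrm{H}_n^{\mathcal{I}_h})=\prod_a M(\mathrm{H}_n^{(I_a)})$ onto $M(\mathrm{H}_n^{(I_a)})$.
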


\begin{proof}
Let $H$ be the image of ${\rm Gal}(f_h(-T^2)|K)$ in $G=\prod_{i=1}^k \mathrm{H}_n = \prod_{a\in J}\prod_{i\in I_a}\mathrm{H}_n$.
As $\Psi$ is injective, it suffices to show that  $H=\mathrm{H}_n^{\mathcal{I}_h}$.

We first treat the case $J=\{a\}$.
%First we show that for each $a\in J$,
%$$
% G_a:={\rm Gal}\left(\prod_{i\in I_a}f_i(-T^2)|K\right) = \prod_{({\rm res}_{K_i})_{i\in I_a}}{\rm Gal}(f_i(-T^2)|K) \cong \prod_{(f_i)_{i\in I_a}} V\rtimes S_n.
%$$
Since $K_i$ is the same for all $i\in I_a$, we call it $K_a$.
It is a common subfield of the splitting fields of all $f_i(-T^2)$,
hence $H$ is contained in $\mathrm{H}_n^{(I_a)}$ %$F_a:=\prod_{(\gamma_i)_{i\in I_a}} V\rtimes S_n$,
%where $\gamma_i:V\rtimes S_n\rightarrow V\rtimes S_n/V_{n-1}\rtimes S_n\cong\mathbb{F}_2$.\footnote{Maybe also rename the corresponding map $f_i$ in the previous section? Then don't have to redefine it here.}
We apply Proposition~\ref{prop:fullfiberproduct}, where  $\pi_i(H)=\mathrm{H}_n$ by \eqref{pi(h)=pi(G)}.
By Lemma \ref{lem:MF}, 
$$
 M(\mathrm{H}_n^{(I_a)})=\prod_{i\in I_a}V_{n-1}\rtimes A_n,
$$ 
and $M(\mathrm{H}_n^{(I_a)})\cap H$
corresponds to the field extension 
$$
 K_a\cdot K(\sqrt{{\rm discr}(f_i)},{i\in I_a}).
$$ 
By Lemma \ref{lem:squareindep}, $f_0(0)+A_0+a$ and $({\rm discr}(f_i))_{i\in I_a}$ are square-independent,
which means that $HM(\mathrm{H}_n^{(I_a)})=\mathrm{H}_n^{(I_a)}$. 
Therefore, we verified the assumptions of Proposition~\ref{prop:fullfiberproduct} and so we conclude that $H=\mathrm{H}_n^{(I_a)}$, as needed.

For general $J$, if $\pi_{a'}:\prod_{a\in J}\prod_{i\in I_a}\mathrm{H}_a\rightarrow\prod_{i\in I_{a'}}\mathrm{H}_n$ denotes the projection map,
then $\pi_a(H)=\mathrm{H}_n^{(I_a)}$ for all $a\in J$ by the previous case,
hence $H\leq \prod_{a\in J}\mathrm{H}_n^{(I_a)}=\mathrm{H}_n^{\mathcal{I}_h}$.
By Lemma~\ref{lem:Mproduct}, 
$$
 M(\mathrm{H}_n^{\mathcal{I}_h})=\prod_{a\in J}M(\mathrm{H}_n^{(I_a)})=\prod_{a\in J}\prod_{i\in I_a}V_{n-1}\rtimes A_n,
$$ 
and $M(\mathrm{H}_n^{\mathcal{I}_h})\cap H$ corresponds to the field extension 
$$
 \prod_{a\in J}K_a\cdot K(\sqrt{{\rm discr}(f_i)},{i=1,\dots,k}).
$$ 
By Lemma \ref{lem:squareindep}, the family 
$$
 (f_0(0)+A_0+a)_{a\in J}\cup({\rm discr}(f_i))_{i=1,\dots,k}
$$ 
is square-independent,
which means that $H M(\mathrm{H}_n^{\mathcal{I}_h})=\mathrm{H}_n^{\mathcal{I}_h}$.
Therefore, Proposition \ref{prop:fullproduct}, applied to the groups $(\mathrm{H}_n^{(I_a)})_{a\in J}$ 
and the subgroup $H\leq \mathrm{H}_n^{\mathcal{I}_h}=\prod_{a\in J}\mathrm{H}_n^{(I_a)}$, gives that $H=\mathrm{H}_n^{\mathcal{I}_h}$.
\end{proof}

%\subsection{Proof of Theorem \ref{maintheorem}}

%Now that we have computed the Galois group of $f_h(-T^2)$,
%the rest of the proof of Theorem \ref{maintheorem} is very similar to the proof of \cite[Theorem 1.1]{BBF}.
%The main ingredient is an explicit Chebotarev theorem, which we recall now.

%Let $q$ be an odd prime power, fix $n\geq m\geq 2$ and let $\epsilon=\frac{m}{n}$. 
%Fix $r\geq 1$, $f_0\in\mathbb{F}_q[T]$ monic of degree $n$ and $h_1,\dots,h_r\in\mathbb{F}_q[T]$ of degree less than $m$ and pairwise distinct.
%As in the previous section let
%$f(T) = f_0(T) + \sum_{i=0}^mA_iT^i \in K[T]$,
%$f_i(T) = f(T)+h_i(T)$ and $f_h(T) = \prod_{i=1}^r f_i(T)$.
%Also define $J=\{h_i(0):i=1,\dots,r\}$, $I_a=\{i:h_i(0)=a\}$ for $a\in J$, and $\mathcal{I}=(I_a)_{a\in J}$.

%\begin{lemma}
%$\prod_{i=1}^rb_q(f_\mathbf{a}+h_i)=1$ if and only if $\Theta(\phi_q)\in X_n^\mathcal{I}$.
%\end{lemma}

%\begin{proof}
%By \cite[Proposition 3.4]{BBF}, $b_q(f_i)=1$ if and only if $\Theta_{g_i}(\phi_q) \in X_n$.
%Since $\Theta_{g_i}(\phi_q)=\pi_i(\Theta(\phi_q))$,
%we have $\prod_{i=1}^rb_q(f_i)=1$ if and only if $\Theta(\phi_q)\in\prod_{i=1}^r X_n= X_n^\mathcal{I}$.
%\end{proof}

\subsection{Correlations of arithmetic functions depending on signed factorization type}
\label{sec:proofgeneral}

We are now ready to prove our general result on correlations in short intervals:

\begin{theorem}\label{thm:general}
Fix $k\geq 1$, $\psi_1,\dots,\psi_k\in\Lambda^*$, $1\geq\epsilon>0$ and $n>2\epsilon^{-1}$.
%Fix $n\geq m\geq 2$ and let $\epsilon=\frac{m}{n}$. 
Then for $q$ an odd prime power, 
$f_0\in\mathbb{F}_q[T]$ monic of degree $n$ and $h_1,\dots,h_k\in\mathbb{F}_q[T]$ of degree less than $n$ and pairwise distinct,
\begin{eqnarray*}
  \left< \prod_{i=1}^k \psi_{i,q}(f+h_i) \right>_{|f-f_0|<|f_0|^\epsilon} &=&  
 \left<\psi_1,\dots,\psi_k\right>_{\mathrm{H}_n^{\mathcal{I}_h}}
 + O_{n,k,\psi}(q^{-1/2}) 
\end{eqnarray*}
where the implied constant depends only on $n$, $k$ and $\psi_1,\dots,\psi_k$,
and $\mathcal{I}_h$ is defined as in $(\ref{eqn:def_I_h})$.
%, and
%$\mathfrak{C}_{h,\psi}\in\mathbb{Q}$ 
%is in a finite set of possible values depending only on $n$, $k$ and $\psi$.
%can be computed explicitly from $h$ and $\psi_1,\dots,\psi_k$.
\end{theorem}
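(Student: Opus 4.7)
The plan is to realize the short interval $I_q(f_0,\epsilon)$ as the specialization family of a single polynomial and then apply Proposition~\ref{prop:groupcorrelation}, using Proposition~\ref{prop:Galois_group_short_interval} to identify the relevant Galois group. Concretely, I would choose $m$ so that every $f \in I_q(f_0,\epsilon)$ is uniquely of the form $f = f_a := f_0 + \sum_{j=0}^{m} a_j T^j$ for some $a \in \mathbb{F}_q^{m+1}$; the hypothesis $n > 2\epsilon^{-1}$ translates exactly to the inequality $n > m \geq 2$ required by Proposition~\ref{prop:Galois_group_short_interval}. Setting $f_{i,A}(T) := f_A(T) + h_i(T)$ and $g_A(T) := \prod_{i=1}^k f_{i,A}(-T^2)$, the correlation on the left hand side of the theorem rewrites as $\left<\prod_i \psi_{i,q}(f_{i,a})\right>_{a\in\mathbb{F}_q^{m+1}}$.

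Next I would verify the hypotheses of Proposition~\ref{prop:groupcorrelation} for the family $(f_{i,A})$. Each $f_{i,A}$ is monic of degree $n$ because $\deg(h_i) < n$; it has Galois group $S_n$ over $\mathbb{F}_q(A)$ by Proposition~\ref{prop:isoms}, which acts transitively on $n$ distinct roots, so it is square-free; and it is not divisible by $T$ because $f_{i,A}(0) = f_0(0) + A_0 + h_i(0)$ is a non-trivial polynomial in $A_0$. Pairwise coprimality follows from the fact that any common root $\omega$ of $f_{i,A}$ and $f_{j,A}$ in $\overline{\mathbb{F}_q(A)}$ would also be a root of $h_i - h_j \in \mathbb{F}_q[T]$, hence algebraic over $\mathbb{F}_q$; but substituting such a $\omega$ into $f_{i,A}(\omega) = 0$ yields a non-trivial polynomial relation among $A_0,\dots,A_m$ (with coefficient $\omega^0 = 1$ for $A_0$), contradicting algebraic independence.

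The crucial input is the Galois group of $g_A$, both arithmetically and geometrically. Since Proposition~\ref{prop:Galois_group_short_interval} is stated over any field of characteristic different from $2$, I would apply it twice: with $F = \mathbb{F}_q$ to obtain $G := \Gal(g_A \mid \mathbb{F}_q(A)) \cong \mathrm{H}_n^{\mathcal{I}_h}$, and with $F = \overline{\mathbb{F}}_q$ to obtain the geometric Galois group $\Gal(g_A \mid \overline{\mathbb{F}}_q(A)) \cong \mathrm{H}_n^{\mathcal{I}_h}$. Since the geometric Galois group is a subgroup of $G$ of the same cardinality, the two coincide, which is precisely the last remaining hypothesis of Proposition~\ref{prop:groupcorrelation}. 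Invoking that proposition then yields
\[
\left<\prod_{i=1}^k \psi_{i,q}(f_{i,a})\right>_{a\in\mathbb{F}_q^{m+1}} \;=\; \left<\psi_1,\dots,\psi_k\right>_{\mathrm{H}_n^{\mathcal{I}_h}} + O_{n,k,\psi}(q^{-1/2}),
\]
which is the desired conclusion of Theorem~\ref{thm:general}.

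The proof is essentially assembly: the structural input, namely the identification of the geometric Galois group of $g_A$ with the fiber product $\mathrm{H}_n^{\mathcal{I}_h}$, has already been established in Sections~\ref{sec:fiber_products} and \ref{sec:Galois_group}, and the passage from Galois groups to averages is handled by Proposition~\ref{prop:groupcorrelation}. The only point worth flagging is the coincidence of arithmetic and geometric Galois groups; it costs nothing here because Proposition~\ref{prop:Galois_group_short_interval} is formulated over an arbitrary base field, so applying it over $\overline{\mathbb{F}}_q$ automatically forces the geometric group to be as large as the arithmetic one.
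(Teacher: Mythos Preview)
Your proof is correct and follows essentially the same approach as the paper: parametrize the short interval by specializations of $f_A$, invoke Proposition~\ref{prop:Galois_group_short_interval} over both $\mathbb{F}_q$ and $\overline{\mathbb{F}}_q$ to identify the arithmetic and geometric Galois groups with $\mathrm{H}_n^{\mathcal{I}_h}$, and then apply Proposition~\ref{prop:groupcorrelation}. The only cosmetic difference is that you verify monicity, square-freeness, non-divisibility by $T$, and coprimality of the $f_{i,A}$ directly, whereas the paper reads them off as consequences of the Galois group being $\mathrm{H}_n^{\mathcal{I}_h}$; both routes are fine.
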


\begin{proof}
The short interval $|f-f_0|<|f_0|^\epsilon$ is precisely the set of specializations
of $f_A=f_0+\sum_{i=0}^{m-1}A_iT^i$, with $m=\lceil\epsilon n\rceil$.
Setting $f_{A,i} = f_A+h_i$ and $f_{A,h}=\prod_{i=1}^kf_{A,i}$,
Proposition~\ref{prop:Galois_group_short_interval} gives that 
$$
 {\rm Gal}(f_{A,h}(-T^2)|\mathbb{F}_q(A))={\rm Gal}(f_{A,h}(-T^2)|\overline{\mathbb{F}}_q(A))=\mathrm{H}_n^{\mathcal{I}_h}.
$$
In particular, the $f_{A,i}$ are monic, square-free, not divisible by $T$ and pairwise coprime.
Therefore, the claim follows from Proposition \ref{prop:groupcorrelation}.
\end{proof}

We note that in the case where the $\psi_i$ depend only on {\em cycle type}, i.e.~$\psi_i(d,1,1)=\psi_i(d,1,-1)$ for all $d$,
%the map $\psi_{i,n}$ factors through the quotient map $\pi:\mathrm{H}_n\rightarrow S_n$,
%i.e.~$\psi_{i,n}=\tilde{\psi}_{i,n}\circ\pi$. Thus,
$$
 \left<\psi_1,\dots,\psi_k\right>_{\mathrm{H}_n^{\mathcal{I}_h}}
 = \prod_{i=1}^k\left<{\psi}_{i}\right>_{\mathrm{H}_n},
$$
%$$
% \left<\prod_{i=1}^k\psi_{i,n}(\pi_i(g))\right>_{g\in\mathrm{H}_n^{\mathcal{I}_h}}
% = \left<\prod_{i=1}^k\tilde{\psi}_{i,n}(\pi_i(g))\right>_{g\in S_n^k}
% = \prod_{i=1}^k\left<\tilde{\psi}_{i,n}(g)\right>_{g\in S_n}
% = \prod_{i=1}^k\left<{\psi}_{i,n}(g)\right>_{g\in\mathrm{H}_n}
%$$
as we will explain in more detail in Lemma \ref{lem:cycle_type}, and hence
%then 
%$\mathrm{H}_k^{\mathcal{I}_h}=\mathrm{H}_n^k$
%and thus
%$$
% \left<\prod_{i=1}^k\psi_{i,n}(\pi_i(g))\right>_{g\in\mathrm{H}_n^{\mathcal{I}_h}} = 
% \prod_{i=1}^k\left<\psi_{i,n}(\pi_i(g))\right>_{g\in\mathrm{H}_n^{k}},
%$$ 
%hence
$$
 \left< \prod_{i=1}^k \psi_{i,q}(f+h_i) \right>_{f\in M_{n,q}}=
 \prod_{i=1}^k \left<\psi_{i,q}(f) \right>_{f\in M_{n,q}} + O(q^{-1/2}),
$$
which was proven already in \cite{ABR} (although not stated in such generality).

\section{Correlations in the large finite field limit}
\label{sec:applications}

We now prove Theorem \ref{thm:b} in short intervals and
compute the correlations of some further examples.
Due to Theorem \ref{thm:general} 
all that is left to do is to compute the corresponding
averages $\left<\psi_1,\dots,\psi_k\right>_{\mathrm{H}_n^{\mathcal{I}_h}}$,
which is a purely combinatorial task.

We will use on several occasions the (trivial) principles that if
$G_1$, $G_2$ are finite groups and $\psi_i\colon G_i\rightarrow\mathbb{C}$, then
\begin{eqnarray}
\label{eqn:psi1psi2}
\left<\psi_1(\sigma_1)\psi_2(\sigma_2)\right>_{(\sigma_1,\sigma_2)\in G_1\times G_2}
 &=&\left<\psi_1(\sigma)\right>_{\sigma\in G_1}\cdot \left<\psi_2(\sigma)\right>_{\sigma\in G_2}
\end{eqnarray}
and if $\pi\colon G_2\rightarrow G_1$ is an epimorphism, then 
\begin{eqnarray}
\label{eqn:psi_epi}
\left<\psi_1(\pi(\sigma))\right>_{\sigma\in G_2}&=&\left<\psi_1(\sigma)\right>_{\sigma\in G_1}.
\end{eqnarray}

\subsection{Autocorrelation of $b$ and $r$}
\label{sec:r}

We will start with the autocorrelation of $b$,
and then look at the closely related arithmetic function $r$, which counts the number of representations as a sum of two squares.
The following general consideration will simplify our arguments:

\begin{lemma}
\label{lem:constant_sign}
Let $\psi_1,\dots,\psi_k\in\Lambda^*$ %be supported on $\{\lambda\in \Lambda : \chi(\lambda)=1\}$  
%with such that 
%$\psi_i(\lambda)=0$ for all $\lambda\in\Lambda$ with 
%%$\sum_{d}\lambda(d,1,-1)\equiv 1(2)$, 
%$\chi(\lambda)=-1$,
%and $i=1,\ldots, k$ 
and let $\mathcal{I}=(I_j)_{j\in J}$ be a partition of $\{1,\ldots, k\}$, i.e.~$I_j\neq\emptyset$ for all $j$ and $\{1,\dots,k\}=\coprod_{j\in J}I_j$.
%If for all $i$, $\psi_i(\lambda)=0$ for all $\lambda\in\Lambda$ with 
%%$\sum_{d}\lambda(d,1,-1)\equiv 1(2)$, 
%$\chi(\lambda)=-1$,
If $\psi_1,\dots,\psi_k$ are all supported on $\chi^{-1}(1)=\{\lambda\in \Lambda : \chi(\lambda)=1\}$, then
$$
 \left<\psi_1,\dots,\psi_k\right>_{\mathrm{H}_n^{\mathcal{I}}} = \mathfrak{S}_\mathcal{I}\cdot  \left<\psi_{1}\right>_{\mathrm{H}_n}\cdots\left<\psi_{k}\right>_{\mathrm{H}_n},
$$
where $\mathfrak{S}_\mathcal{I}=2^{k-\#J}$.
\end{lemma}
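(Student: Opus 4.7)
The plan is to unpack the fiber product structure of $\mathrm{H}_n^{\mathcal{I}}$, use the support hypothesis to collapse the sign constraint, and then count sizes.

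First, by definition, $\mathrm{H}_n^{\mathcal{I}} = \prod_{j \in J} \mathrm{H}_n^{(I_j)}$ sits inside $\prod_{j\in J}\prod_{i\in I_j}\mathrm{H}_n$, and the $k$-multilinear form splits along the partition:
\[
  \left<\psi_1,\dots,\psi_k\right>_{\mathrm{H}_n^{\mathcal{I}}}
    = \prod_{j\in J} \left<(\psi_i)_{i\in I_j}\right>_{\mathrm{H}_n^{(I_j)}},
\]
using the product principle \eqref{eqn:psi1psi2}. So it suffices to prove the claim when $\mathcal{I}$ consists of a single class $I\subseteq\{1,\dots,k\}$, namely
\[
  \left<\prod_{i\in I}\psi_i\right>_{\mathrm{H}_n^{(I)}} = 2^{|I|-1} \prod_{i\in I}\left<\psi_i\right>_{\mathrm{H}_n},
\]
since then multiplying across $j\in J$ yields the total factor $\prod_{j\in J} 2^{|I_j|-1} = 2^{k-\#J} = \mathfrak{S}_{\mathcal{I}}$.

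The second step exploits the support hypothesis. For $\sigma\in\mathrm{H}_n$ one has $\chi(\sigma)=\ts_n(\sigma)\in\{\pm 1\}$ (never $0$), so the assumption that each $\psi_i$ is supported on $\chi^{-1}(1)$ means $\psi_i(\sigma)=0$ whenever $\ts_n(\sigma)=-1$. Decomposing $\mathrm{H}_n^{(I)}$ by the common sign $\epsilon\in\{\pm 1\}$,
\[
  \sum_{\sigma\in\mathrm{H}_n^{(I)}} \prod_{i\in I} \psi_i(\sigma_i)
  = \sum_{\epsilon=\pm 1}\sum_{\substack{(\sigma_i)\in \mathrm{H}_n^{I}\\\ts_n(\sigma_i)=\epsilon\;\forall i}}\prod_{i\in I}\psi_i(\sigma_i),
\]
the summand for $\epsilon=-1$ vanishes, and for $\epsilon=+1$ the constraint is automatic on the support of each $\psi_i$, so the restricted sum equals $\prod_{i\in I}\sum_{\sigma_i\in\mathrm{H}_n}\psi_i(\sigma_i)$.

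Finally, since $\ker\ts_n = V_{n-1}\rtimes S_n$ has index $2$ in $\mathrm{H}_n$, a short count gives $\#\mathrm{H}_n^{(I)} = 2\cdot(\#\mathrm{H}_n/2)^{|I|} = (\#\mathrm{H}_n)^{|I|}/2^{|I|-1}$. Dividing through yields
\[
  \left<\prod_{i\in I}\psi_i\right>_{\mathrm{H}_n^{(I)}}
  = \frac{(\#\mathrm{H}_n)^{|I|}}{\#\mathrm{H}_n^{(I)}}\prod_{i\in I}\left<\psi_i\right>_{\mathrm{H}_n}
  = 2^{|I|-1}\prod_{i\in I}\left<\psi_i\right>_{\mathrm{H}_n},
\]
which completes the single-class case and hence the lemma. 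There is no real obstacle here: the whole argument is a bookkeeping exercise once one notices that the support condition decouples the fiber product into an honest direct product on the $\epsilon=+1$ half.
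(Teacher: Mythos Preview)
Your proof is correct and follows essentially the same approach as the paper: both exploit that the support hypothesis forces each $\psi_{i,n}$ to vanish outside $\ker\ts_n=V_{n-1}\rtimes S_n$, so the only contributing tuples lie in $\prod_i V_{n-1}\rtimes S_n\subseteq\mathrm{H}_n^{\mathcal{I}}$, where the fiber constraint is vacuous and the sum factors. The paper does this in one step over the full partition rather than first reducing to a single class, but the counting $\#\mathrm{H}_n^k/\#\mathrm{H}_n^{\mathcal{I}}=2^{k-\#J}$ is the same.
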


\begin{proof}
Since by assumption $\psi_{i,n}(\sigma)=0$ for $\sigma\in\mathrm{H}_n\setminus V_{n-1}\rtimes S_n$ and since $\mathrm{H}_n^{\mathcal{I}}$ contains $\prod_i V_{n-1} \rtimes S_n$, we get by \eqref{eqn:psi1psi2} that
\[
\left<\psi_1,\dots,\psi_k\right>_{\mathrm{H}_n^{\mathcal{I}}} 
	=\frac{1}{\#\mathrm{H}_n^{\mathcal{I}}} \sum_{\sigma=(\sigma_i)\in  \prod_i V_{n-1}\rtimes S_n} \prod_{i} \psi_i(\sigma_i)=
	\frac{\#(V_{n-1}\rtimes S_n)^k}{\#\mathrm{H}_n^\mathcal{I}}\cdot \prod_{i=1}^k\left<\psi_{i}\right>_{V_{n-1}\rtimes S_n}.
\]
This finishes the proof as 
$\mathfrak{S}_{\mathcal{I}} = \frac{\#\mathrm{H}_n^k}{\#\mathrm{H}_n^\mathcal{I}}$. 
%
%Since $\mathrm{H}_n^{\mathcal{I}}$ contains $\prod_i V_{n-1} \rtimes S_n$, and Since by assumption $\psi_{i,n}(\sigma)=0$ for $\sigma\in\mathrm{H}_n\setminus V_{n-1}\rtimes S_n$, we get by \eqref{eqn:psi1psi2} that
%\begin{align*}
%\left<\psi_1,\dots,\psi_k\right>_{\mathrm{H}_n^{\mathcal{I}}} 
%	&=\frac{1}{\#\mathrm{H}_n^{\mathcal{I}}} 
%\end{align*}
%
%
%\begin{eqnarray*}
% \left<\psi_1,\dots,\psi_k\right>_{\mathrm{H}_n^{\mathcal{I}}} 
%  &=& \frac{\#(V_{n-1}\rtimes S_n)^k}{\#\mathrm{H}_n^\mathcal{I}}\cdot\left<\psi_1,\dots,\psi_k\right>_{(V_{n-1}\rtimes S_n)^k} \\
% &%\stackrel{(\ref{eqn:psi1psi2})}{=}
% =& \frac{\#(V_{n-1}\rtimes S_n)^k}{\#\mathrm{H}_n^\mathcal{I}}\cdot \prod_{i=1}^k\left<\psi_{i}\right>_{V_{n-1}\rtimes S_n} \\ 
% &=& \frac{\#\mathrm{H}_n^k}{\#\mathrm{H}_n^\mathcal{I}}\cdot\prod_{i=1}^k\left<\psi_{i}\right>_{\mathrm{H}_n}.
%\end{eqnarray*}
%\begin{eqnarray*}
% \left<\prod_{i=1}^k \psi_{i,n}(\pi_i(g))\right>_{g\in\mathrm{H}_n^{\mathcal{I}}} 
%  &=& \frac{\sum_{g\in\mathrm{H}_n^{\mathcal{I}}}\prod_{i=1}^k\psi_{i,n}(\pi_i(g))}{\prod_{j\in J}\#\mathrm{H}_n^{(I_j)}}\\
%  &=& \frac{\sum_{g\in(V_{n-1}\rtimes S_n)^k}\prod_{i=1}^k\psi_{i,n}(\pi_i(g))}{\prod_{j\in J}\#\mathrm{H}_n^{\#I_j}/2^{\#I_j-1}}\\
%  &=& \frac{\prod_{i=1}^k\sum_{g\in V_{n-1}\rtimes S_n}\psi_{i,n}(g)}{\#\mathrm{H}_n^{k}/2^{k-\#J}}\\
%  &=& 2^{k-\#J}\prod_{i=1}^k\frac{\sum_{g\in\mathrm{H}_n}\psi_{i,n}(g)}{\#\mathrm{H}_n}
%\end{eqnarray*}
\end{proof}

The following result gives the autocorrelations of $b_q$ in short intervals. 
Theorem~\ref{thm:b} is the special case with parameters 
$\epsilon=1$ and $f_0=T^n$:

\begin{theorem}\label{thm:b_short}
Fix $k\geq 1$, $1\geq\epsilon>0$ and $n>2\epsilon^{-1}$.
Then for $q$ an odd prime power, 
$f_0\in\mathbb{F}_q[T]$ monic of degree $n$ and $h_1,\dots,h_k\in\mathbb{F}_q[T]$ of degree less than $n$ and pairwise distinct,
\begin{eqnarray*}
  \left< \prod_{i=1}^k b_q(f+h_i) \right>_{|f-f_0|<|f_0|^\epsilon} &=&
 \mathfrak{S}_{q,h} \cdot\left< b_q(f) \right>_{f\in M_{n,q}}^k + O_{n,k}(q^{-1/2}) \\
 &=&  \mathfrak{S}_h\cdot \frac{1}{4^{nk}}\left(2n\atop n\right)^k +O_{n,k}(q^{-1/2}) 
\end{eqnarray*}
where the implied constant depends only on $n$ and $k$, and
$\mathfrak{S}_{q,h}$ and $\mathfrak{S}_h$ are defined as in (\ref{eqn:def_Sqh}) and (\ref{eqn:def_Sh}).
%:=
%  2^{k-\#\{h_1(0),\dots,h_k(0)\}}.
%\prod_{a\in\mathbb{F}_q}\max\left\{1,2^{\#\{i\;:\;h_i\equiv a\mbox{\scriptsize mod }T\}-1}\right\}.
%$$ 
\end{theorem}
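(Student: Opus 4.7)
The plan is to apply the general machinery of Theorem~\ref{thm:general} with $\psi_1=\dots=\psi_k=b$, reducing everything to a combinatorial average on the hyperoctahedral fiber product $\mathrm{H}_n^{\mathcal{I}_h}$, and then to evaluate that average by means of Lemma~\ref{lem:constant_sign} combined with a direct exponential-generating-function computation.

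First I would check that $b\in\Lambda^*$ is supported on $\chi^{-1}(1)$ in the sense required by Lemma~\ref{lem:constant_sign}: for any $\sigma=x\tau\in\mathrm{H}_n$ with $b_n(\sigma)=1$, the definition of $b$ forces $\lambda_\sigma(d,1,-1)=0$ for every $d$, which means $(-1)^{\sum_{i\in\Omega}x_i}=1$ on every orbit $\Omega$ of $\tau$; summing gives $\chi_n(\sigma)=\prod_\Omega 1=1$, so $b_n$ vanishes off $V_{n-1}\rtimes S_n$. Once this is in place, Theorem~\ref{thm:general} applied to the short interval yields
\[
\left<\prod_{i=1}^k b_q(f+h_i)\right>_{|f-f_0|<|f_0|^\epsilon} = \left<b,\dots,b\right>_{\mathrm{H}_n^{\mathcal{I}_h}}+O_{n,k}(q^{-1/2}),
\]
and Lemma~\ref{lem:constant_sign} gives
\[
\left<b,\dots,b\right>_{\mathrm{H}_n^{\mathcal{I}_h}}=2^{k-\#J}\cdot\left<b\right>_{\mathrm{H}_n}^k=\mathfrak{S}_h\cdot\left<b\right>_{\mathrm{H}_n}^k,
\]
since $|J|=\#\{h_1(0),\dots,h_k(0)\}$ by construction of $\mathcal{I}_h$.

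The main remaining task is to compute $\left<b\right>_{\mathrm{H}_n}$ in closed form. For a fixed $\tau\in S_n$ with $c(\tau)$ cycles, the number of $x\in V$ for which $\sum_{i\in\Omega}x_i=0$ on every orbit $\Omega$ is exactly $2^{n-c(\tau)}$; hence
\[
\left<b\right>_{\mathrm{H}_n}=\frac{1}{2^n\, n!}\sum_{\tau\in S_n}2^{\,n-c(\tau)}=\frac{1}{n!}\sum_{\tau\in S_n}2^{-c(\tau)}.
\]
Using the standard exponential generating function $\sum_{n\ge 0}\frac{z^n}{n!}\sum_{\tau\in S_n}w^{c(\tau)}=(1-z)^{-w}$ with $w=1/2$ gives $\sum_{n\ge 0}\frac{z^n}{n!}\sum_{\tau\in S_n}2^{-c(\tau)}=(1-z)^{-1/2}=\sum_{n\ge 0}\tfrac{1}{4^n}\binom{2n}{n}z^n$, so
\[
\left<b\right>_{\mathrm{H}_n}=\frac{1}{4^n}\binom{2n}{n}.
\]
Combining the displays yields the second equality in the theorem. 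The first equality follows by noting that the $k=1$, $h=0$ case of what has just been proved gives $\left<b_q(f)\right>_{f\in M_{n,q}}=\frac{1}{4^n}\binom{2n}{n}+O_n(q^{-1/2})$, while Proposition~\ref{prop:limSqh} supplies $\mathfrak{S}_{q,h}=\mathfrak{S}_h+O_{n,k}(q^{-1/2})$; multiplying these asymptotic identities together and comparing with the closed form produces the stated expression with $\mathfrak{S}_{q,h}\cdot\left<b_q(f)\right>_{f\in M_{n,q}}^k$, the error terms being absorbed into $O_{n,k}(q^{-1/2})$ because the main term is bounded in terms of $n$ and $k$ only.

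The only nontrivial step is the generating-function identification in Step~3; every other step is an essentially mechanical invocation of results already proved in the paper. I do not anticipate any substantial obstacle, since both the Chebotarev-type input (Proposition~\ref{prop:groupcorrelation}) and the determination of the relevant Galois group (Proposition~\ref{prop:Galois_group_short_interval}) are already packaged inside Theorem~\ref{thm:general}.
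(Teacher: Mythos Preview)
Your proposal is correct and follows essentially the same route as the paper: apply Theorem~\ref{thm:general} with $\psi_i=b$, invoke Lemma~\ref{lem:constant_sign} to factor the average as $\mathfrak{S}_h\cdot\langle b\rangle_{\mathrm{H}_n}^k$, and then combine with Proposition~\ref{prop:limSqh} and the $k=1$ case. The only difference is that where the paper cites Ewens' sampling formula (via \cite[Lemma~2.3]{BBF}) for $\langle b\rangle_{\mathrm{H}_n}=4^{-n}\binom{2n}{n}$, you derive it directly from the generating function $(1-z)^{-1/2}$; this is a harmless and self-contained alternative.
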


\begin{proof}
%Let $n$, $m$, $f_0$ and $h_1,\dots,h_k$ be as assumed there.
Set $\psi_1=\dots=\psi_k=b$ and $\mathcal{I}=\mathcal{I}_h$
and note that Lemma \ref{lem:constant_sign} applies,
leading to 
$\left<b,\dots,b\right>_{\mathrm{H}_n^{\mathcal{I}_h}} = \mathfrak{S}_h\cdot \left< b\right>_{\mathrm{H}_n}^k$,
since $\mathfrak{S}_{\mathcal{I}_h}=\mathfrak{S}_h$.
Ewens' sampling formula gives that 
$\left<b\right>_{\mathrm{H}_n} = \frac{1}{4^n}\left(2n\atop n\right)$, 
see \cite[Lemma 2.3]{BBF}.
Therefore Theorem \ref{thm:general} shows that
\begin{eqnarray*}
  \left< \prod_{i=1}^k b_q(f+h_i) \right>_{|f-f_0|<|f_0|^\epsilon} 
  &=&  \mathfrak{S}_h\cdot \frac{1}{4^{nk}}\left(2n\atop n\right)^k +O_{n,k}(q^{-1/2}).
\end{eqnarray*}
In particular, $\left< b_q(f) \right>_{f\in M_{n,q}}=\frac{1}{4^n}\left(2n\atop n\right)+O(q^{-1/2})$
(which also follows from \cite{BSW}).
Together with $\mathfrak{S}_h=\mathfrak{S}_{q,h}+O(q^{-1/2})$ (Proposition \ref{prop:limSqh}),
we conclude that
\begin{eqnarray*}
    \mathfrak{S}_h\cdot \frac{1}{4^{nk}}\left(2n\atop n\right)^k
&=& \mathfrak{S}_{q,h} \cdot\left< b_q(f) \right>_{f\in M_{n,q}}^k + O_{n,k}(q^{-1/2}),
\end{eqnarray*}
as needed. 
\end{proof}

Just like the autocorrelation of $b$ (see the introduction), the autocorrelation of $r$ has been studied by various authors,
but the latter one turns out to be much more accessible:
Already Estermann \cite{Estermann} proves an asymptotic formula for $\left<r(n)r(n+h)\right>_{n\leq x}$.
Apparently unaware of that, Connors and Keating \cite[eqn.~(27)]{ConnorsKeating} provide 
a conjectural formula and numerics for 
$\lim_{x\rightarrow\infty}\left<r(n)r(n+h)\right>_{n\leq x}$
and observe that 
here the data seems to match their prediction even better than
in the case of $b$.
We define the function field analogue of $r$ as
\begin{eqnarray*}
 r_q(f) &=& \#\{(A,B) : f=A^2+TB^2, A,B\in\mathbb{F}_q[T]\}/\{\pm1\}.
\end{eqnarray*}
We note that $r_q$ is multiplicative and therefore one obtains, just like for $r$, the formula
\begin{eqnarray*}
 r_q(P_1^{e_1}\cdots P_r^{e_r}) &=& \begin{cases}\prod_{\chi_q(P_i)=1}(e_i+1),&\mbox{if }e_i\mbox{ is even for all $i$ with }\chi_q(P_i)=-1\\0,&\mbox{otherwise} \end{cases},
\end{eqnarray*}
which implies that $r_q$ is induced from $r\in \Lambda^*$ defined by %the function $r:\mathrm{H}_n\rightarrow\mathbb{C}$ in (\ref{}).
\begin{equation}\label{eq:rdef}
 r(\lambda)=\begin{cases} \prod_{d,e}(e+1)^{\lambda(d,e,1)}, &\mbox{if }\lambda(d,2e+1,-1)=0\mbox{ for all }d,e \\ 0, &\mbox{otherwise} \end{cases}.
\end{equation}
We remark without proof that, like for integers, also the formula
\begin{eqnarray*}
 r_q(f) &=& \sum_{d|f,d\in M_q}\chi_q(d)
\end{eqnarray*}
holds.

We start by computing the mean value of $r$ on $\mathrm{H}_n$.
\begin{lemma}
\label{lem:average_r}
$\left<r\right>_{\mathrm{H}_n} = 1$
\end{lemma}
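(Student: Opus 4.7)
The plan is to unpack $r(\lambda_\sigma)$ for $\sigma = x\tau \in \mathrm{H}_n$ (with $x \in V = (\mathbb{F}_2)^n$ and $\tau \in S_n$) and then sum directly. First, I would note from \eqref{def:lambdasig} that $\lambda_\sigma$ is supported on $e = 1$, so the condition in \eqref{eq:rdef} that $\lambda_\sigma(d,2e+1,-1) = 0$ for all $d,e$ reduces to $\lambda_\sigma(d,1,-1) = 0$ for all $d$; equivalently, every cycle $\Omega$ of $\tau$ must satisfy $\sum_{i \in \Omega} x_i \equiv 0 \pmod{2}$. When this holds, every cycle contributes to $\lambda_\sigma(\cdot,1,1)$ only, and the product in \eqref{eq:rdef} collapses to
\[
  r(\lambda_\sigma) \;=\; \prod_d (1+1)^{\lambda_\sigma(d,1,1)} \;=\; 2^{c(\tau)},
\]
where $c(\tau)$ denotes the total number of cycles of $\tau$. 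When the orbit-parity condition fails, $r(\lambda_\sigma) = 0$.

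Next, for each fixed $\tau \in S_n$, I would count the $x \in V$ satisfying the orbit-sum constraints $\sum_{i \in \Omega} x_i = 0$, one per cycle of $\tau$. Since the cycles partition $\{1,\dots,n\}$, these $c(\tau)$ constraints involve disjoint coordinate blocks and are therefore $\mathbb{F}_2$-linearly independent, so exactly $2^{n - c(\tau)}$ elements $x \in V$ satisfy them. Combining the two computations,
\[
  \left<r\right>_{\mathrm{H}_n} \;=\; \frac{1}{|\mathrm{H}_n|} \sum_{\tau \in S_n} 2^{c(\tau)} \cdot 2^{n - c(\tau)} \;=\; \frac{1}{2^n\, n!} \cdot n! \cdot 2^n \;=\; 1.
\]
There is no real obstacle: the cycle-weight $2^{c(\tau)}$ coming from $r$ is cancelled by the density $2^{-c(\tau)}$ of $x \in V$ satisfying the orbit parity conditions, a cancellation that is independent of $\tau$ and explains why the mean value is exactly $1$.
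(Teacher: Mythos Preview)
Your proof is correct and follows essentially the same approach as the paper: both reduce $r(\lambda_\sigma)$ to $2^{c(\tau)}$ on the set where every orbit has even $x$-sum, count that set as $2^{n-c(\tau)}$ for each fixed $\tau$, and observe the resulting cancellation. The paper's argument differs only in notation.
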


\begin{proof}
As usual we write an element  $\sigma\in \mathrm{H}_n$ as $\sigma = x \tau$ with $\tau\in S_n$ and $x=(x_1,\ldots, x_n) \in V =\mathbb{F}_2^n$ and we recall the corresponding signed factorization type $\lambda_\sigma$ attached to $\sigma$ and given in \eqref{def:lambdasig}.
Note that $\lambda_\sigma(d,e,s)=0$ if $e>1$.
Thus, 
if $\sum_{d}\lambda_\sigma(d,1,-1)=0$, then $\sum_{d,e}\lambda_\sigma(d,2e+1,-1)=0$,
$\prod_{d,e}(e+1)^{\lambda_\sigma(d,e,1)}=2^{\sum_d\lambda_\sigma(d,1,1)}$,
and $\sum_{d}\lambda_\sigma(d,1,1)=\sum_{d,s}\lambda_\sigma(d,1,s)=\sum_{d}\lambda_\tau(d,1,1)$.
By \eqref{eq:rdef} we therefore get
%
%
%If $\sum_d \lambad_\sigma(d,1,-1) >0$, then $r(\sigma)=0$. Otherwise, by \eqref{eq:rdef}
\begin{equation}
\label{eq:rcal} 
r_n(\sigma) = 
\begin{cases}
2^{\sum_d \lambda_\tau(d,1,1)}, & \mbox{if } \sum_d \lambda_\sigma(d,1,-1) =0\mbox{ and }\sigma=x\tau\\
0, & \mbox{otherwise.} 
\end{cases}
\end{equation}
Let $N_\tau$ denote the number of $\sigma=x\tau\in \mathrm{H}_n$ with $\sum_{d} \lambda_\sigma(d,1,-1)=0$. Note that $\sigma $ is counted by $N_\tau$ if and only if for each orbit $I\subset\{1,\ldots, n\}$ of $\tau$ we have $\sum_{i\in I}x_i=0$, and so there are $2^{n-j}$ such $\sigma$, where
\[
j= \sum_{d,s} \lambda_\tau(d,1,s) = \sum_{d} \lambda_\tau(d,1,1)
\] 
is the number of orbits of $\tau$.
So 
\begin{equation}
\label{cal:Ntau}
N_{\tau} = 2^{n- \sum_{d} \lambda_\tau(d,1,1)}.
\end{equation}
By \eqref{eq:rcal} and \eqref{cal:Ntau}, we conclude that 
\[
\sum_{\sigma \in \mathrm{H}_n} r_n(\sigma) = \sum_{\tau\in S_n}\sum_{x\in\mathbb{F}_2^n}r_n(x\tau)
=  \sum_{\tau\in S_n}N_\tau r_n(\tau) 
 = \sum_{\tau\in S_n} 2^n = n! 2^n = \#\mathrm{H}_n,
\]
and so $\left< r\right>_{\mathrm{H}_n} = 1$.
%
%
%The fibers of the quotient map $\pi\colon \mathrm{H}_n\rightarrow S_n$ have size $2^n$.
%For each $\tau\in S_n$ and $\sigma\in\pi^{-1}(\tau)$,
%$\sum_{d,s}\lambda_\sigma(d,1,s)$ is the number of orbits of $\tau$,
%hence 
%$$
% \#\{\sigma\in\pi^{-1}(\tau) : r_n(\sigma)\neq 0\} =
%\#\{\sigma\in\pi^{-1}(\tau) : \sum_{d}\lambda_\sigma(d,1,-1)=0\} 
% = 2^{n-\sum_{d,s}\lambda_\sigma(d,1,s)}.
%$$
%Note that if $\sum_{d}\lambda_\sigma(d,1,-1)=0$, then $r_n(\lambda_\sigma)=2^{\sum_{d,e}\lambda_\sigma(d,e,1)}=2^{\sum_{d,s}\lambda_\sigma(d,1,s)}$.
%Thus,
%$$
% \left<r_n(\sigma)\right>_{\sigma\in\mathrm{H}_n} = \frac{\sum_{\tau\in S_n}\sum_{\sigma\in\pi^{-1}(\tau)} r_n(\sigma) }{\#\mathrm{H}_n} 
%=\frac{\sum_{\tau\in S_n}2^n }{2^nn!}
%= 1.
%$$
\end{proof}

Now we can compute the autocorrelation of $r$:

\begin{theorem}\label{thm:r}
Fix $k\geq 1$, $1\geq\epsilon>0$ and $n>2\epsilon^{-1}$.
Then for $q$ an odd prime power, 
$f_0\in\mathbb{F}_q[T]$ monic of degree $n$ and $h_1,\dots,h_k\in\mathbb{F}_q[T]$ of degree less than $n$ and pairwise distinct,
\begin{eqnarray*}
  \left< \prod_{i=1}^k r_q(f+h_i) \right>_{|f-f_0|<|f_0|^\epsilon} &=&
 \mathfrak{S}_{q,h} \cdot\left< r_q(f) \right>_{f\in M_{n,q}}^k + O_{n,k}(q^{-1/2}) \\
 &=&  \mathfrak{S}_h  +O_{n,k}(q^{-1/2}) 
\end{eqnarray*}
where the implied constant depends only on $n$ and $k$, and
$\mathfrak{S}_{q,h}$ and $\mathfrak{S}_h$ are defined as in (\ref{eqn:def_Sqh}) and (\ref{eqn:def_Sh}).
\end{theorem}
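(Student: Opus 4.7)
The plan is to follow the proof of Theorem~\ref{thm:b_short} mutatis mutandis, replacing the arithmetic function $b$ by $r$ (viewed via~\eqref{eq:rdef} as an element of $\Lambda^*$) and substituting Ewens' formula with the cleaner evaluation $\left<r\right>_{\mathrm{H}_n}=1$ given by Lemma~\ref{lem:average_r}.

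Concretely, I would first check that $r\in\Lambda^*$ is supported on $\chi^{-1}(1)$ as far as elements of $\mathrm{H}_n$ are concerned: for any $\sigma\in\mathrm{H}_n$ the type $\lambda_\sigma$ is supported on $e=1$ and on $s\in\{\pm1\}$, so $r(\lambda_\sigma)\neq 0$ forces $\lambda_\sigma(d,1,-1)=0$ for every $d$, which immediately gives $\chi(\lambda_\sigma)=1$. Lemma~\ref{lem:constant_sign} applied with $\psi_1=\dots=\psi_k=r$ then produces
\[
 \left<r,\dots,r\right>_{\mathrm{H}_n^{\mathcal{I}_h}} \;=\; \mathfrak{S}_{\mathcal{I}_h}\cdot\left<r\right>_{\mathrm{H}_n}^k \;=\; \mathfrak{S}_h,
\]
using $\mathfrak{S}_{\mathcal{I}_h}=\mathfrak{S}_h$ from~\eqref{eqn:def_Sh} together with Lemma~\ref{lem:average_r}. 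Feeding this into Theorem~\ref{thm:general} yields at once the second equality asserted by Theorem~\ref{thm:r}:
\[
 \left<\prod_{i=1}^k r_q(f+h_i)\right>_{|f-f_0|<|f_0|^\epsilon}\;=\;\mathfrak{S}_h+O_{n,k}(q^{-1/2}).
\]

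To obtain the first equality I would specialise the same argument to $k=1$ and $h=(0)$, in which case $\mathfrak{S}_h=1$ and $\mathrm{H}_n^{\mathcal{I}_{(0)}}=\mathrm{H}_n$; this gives $\left<r_q(f)\right>_{f\in M_{n,q}}=1+O_n(q^{-1/2})$. Combining with Proposition~\ref{prop:limSqh}, which furnishes $\mathfrak{S}_h=\mathfrak{S}_{q,h}+O_{n,k}(q^{-1/2})$ and in particular the uniform boundedness of $\mathfrak{S}_{q,h}$, produces
\[
 \mathfrak{S}_{q,h}\cdot\left<r_q(f)\right>_{f\in M_{n,q}}^k \;=\; \mathfrak{S}_h+O_{n,k}(q^{-1/2}),
\]
which matches the stated asymptotic. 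I do not foresee any serious obstacle: all of the heavy machinery — the Galois computation determining $\mathrm{H}_n^{\mathcal{I}_h}$, the Chebotarev input of Proposition~\ref{prop:groupcorrelation}, and especially the clean evaluation $\left<r\right>_{\mathrm{H}_n}=1$ of Lemma~\ref{lem:average_r} — is already in place, so Theorem~\ref{thm:r} reduces to a routine assembly of these ingredients.
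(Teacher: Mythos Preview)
Your proposal is correct and follows essentially the same route as the paper: apply Theorem~\ref{thm:general} with $\psi_1=\dots=\psi_k=r$, invoke Lemma~\ref{lem:constant_sign} (your explicit check that $r_n$ vanishes off $V_{n-1}\rtimes S_n$ is exactly what the proof of that lemma uses), and then use Lemma~\ref{lem:average_r} together with Proposition~\ref{prop:limSqh} to pass between $\mathfrak{S}_h$ and $\mathfrak{S}_{q,h}$. The paper's own proof is a terse three-line version of precisely this argument.
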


\begin{proof}
Apply Theorem \ref{thm:general} with $\psi_1=\dots=\psi_k=r$.
Note that Lemma \ref{lem:constant_sign} applies again,
so the claim follow from Lemma \ref{lem:average_r}.
\end{proof}

\subsection{Cross-correlations of $b$ and $r$ with $1_\mathbb{P}$}

We now turn to cross-correlations of $b$ and $r$ with 
%the van Mangoldt function $\Lambda$ and 
the prime indicator function $1_\mathbb{P}$,
which also have been studied by various authors:
Starting from a conjecture of Hardy and Littlewood \cite[Conjecture J]{HL}
on the number of representations of an integer as the sum of two squares and a prime,
\cite[Theorem 2]{Hooley0} proves an asymptotic formula for 
$$
 \frac{1}{x}\sum_{p\leq x\mbox{\;\;\scriptsize prime}}r(p+h)=\left<r(n)1_{\mathbb{P}}(n-h)\right>_{n\leq x}
$$ 
under the Extended Riemann Hypothesis,
which \cite{Bredihin} then proves unconditionally.
Motohashi in \cite[Conjecture J*, Theorem 2]{Motohashi} and \cite{Motohashi2}
gives a conjectural asymptotic formula for %$\left<b(n)1_{\mathbb{P}}(n+1)\right>_{n\leq x}$ 
$$
 \frac{1}{x}\sum_{p\leq x\mbox{\;\;\scriptsize prime}}b(p-1)=\left<b(n)1_{\mathbb{P}}(n+1)\right>_{n\leq x-1}
$$ 
and proves upper and lower bounds. %(He also studies $\left<b(n)\Lambda(n+1)\right>_{n\leq x}$.)
Iwaniec \cite[p.~204]{Iwaniec1972} also proves lower and upper bounds and suggests a correction of Motohashi's conjecture. 
We now give a function field version of the Motohashi--Iwaniec conjecture 
(or rather a generalization of it from $h=1$ to arbitrary $h$)
and prove it in the large finite field limit.

The heuristics for the cross-correlation of $b$ and $1_{\mathbb{P}}$ is very similar to that of the auto-correlation of $b$ discussed in Section~\ref{sec:heuristics}, hence we omit some of the details, and leave them as an exercise for the reader.
Let $h\in\mathbb{F}_q[T]$ be non-zero. For each prime polynomial $P$, we let 
\begin{align}
\rho_{q,h}(P) &= \lim_{\nu \to \infty} \frac{ \#\{ f \mbox{ mod } P^{\nu} : 
f\in M_q\mbox{ prime and }\exists A,B: f+h\equiv A^2+TB^2\mbox{ mod }P^\nu\}}{|P|^\nu},\\
\rho_q (P) &= \lim_{\nu \to \infty} \frac{\#\{ (f,g)\mbox{ mod } P^{\nu} : f\in M_q \mbox{ prime and } \exists A,B: g \equiv A^2+TB^2 \mbox{ mod } P^{\nu}\}}{|P|^{2\nu}}.
%\rho_{q,h}(P) &= \lim_{\nu \to \infty} \frac{ \#\{ f \mod P^{\nu} : f\not\equiv 0 \mod P \mbox{ and } f + h \equiv A^2+TB^2 \mod P^{\nu}\}}{|P|^\nu},\\
%\rho_q (P) &= \lim_{\nu \to \infty} \frac{\#\{ (f,g) \mod P^{\nu} : f\not\equiv 0 \mod P \mbox{ and } g \equiv A^2+TB^2 \mod P^{\nu}\}}{|P|^{2\nu}}.
\end{align}
One may verify that the limits indeed exist. So $\rho_{q,h}(P)/\rho_q(P)$ measures the local deviation at $P$ from the random model. 
%One could have been more precise, and in the definition of $\rho_h(P)$ would replace $f\not\equiv 0\mod P$ by the condition $f\not \equiv 0\mod P^\mu$ for a fixed $\mu$. 
%Recalling that $\left<1_\mathbb{P}(f)\right>_{f\in M_{n,q}} = 1/n$, 
One then may make the following 
\begin{conjecture}\label{conjecture:bLambda}
Fix $N\geq 1$. Then for $q$ an odd prime power, $n\geq N$, and nonzero $h\in \FF_q[T]$ of degree less than $N$, 
\begin{eqnarray*}
\left< 1_\mathbb{P}(f)b_q(f+h)\right>_{f\in M_{n,q}} &\sim& \mathfrak{T}_{q,h}\cdot \left<1_\mathbb{P}(f)\right>_{f\in M_{n,q}}\cdot\left<b_q(f) \right>_{f\in M_{n,q}}\\
&\sim&\mathfrak{T}_{q,h}\cdot K_q\cdot \frac{1}{n 4^n}\left(2n\atop n\right),
\end{eqnarray*}
uniformly as $q^n \to \infty$, where $K_q$ is defined as in \eqref{eqn:def_Kq} and 
\begin{equation}\label{conj:lambdab}
\mathfrak{T}_{q,h} = \prod_{\substack{P\in\mathbb{F}_q[T]\\\mathrm{monic\; irred.}}} \frac{\rho_{q,h}(P)}{\rho_q(P)}.
\end{equation}
\end{conjecture}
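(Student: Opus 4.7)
Since the conjecture claims a uniform asymptotic along any path $q^n\to\infty$, I would attack it by splitting into the large finite field regime (fixed $n$, $q\to\infty$) and the large degree regime (fixed $q$, $n\to\infty$), and then try to match their error terms into a single uniform bound.

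For the large finite field regime, both $1_\mathbb{P}$ and $b$ are induced from elements of $\Lambda^*$, so Theorem \ref{thm:general} applies with $k=2$, $\psi_1=1_\mathbb{P}$, $\psi_2=b$, and the pair $(0,h)$, yielding main term $\langle 1_\mathbb{P},b\rangle_{\mathrm{H}_n^{\mathcal{I}_h}}$ with error $O_n(q^{-1/2})$. The group average then needs to be computed explicitly: when $h(0)\neq 0$ the fiber product $\mathrm{H}_n^{\mathcal{I}_h}$ equals $\mathrm{H}_n\times\mathrm{H}_n$ and \eqref{eqn:psi1psi2} factors it as $\langle 1_\mathbb{P}\rangle_{\mathrm{H}_n}\cdot\langle b\rangle_{\mathrm{H}_n}$; when $h(0)=0$ one has the nontrivial coupled fiber product $\mathrm{H}_n^{(\{1,2\})}$ and needs an additional short combinatorial computation in the spirit of Lemma \ref{lem:constant_sign}. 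In parallel, I would analyze the local factors $\rho_{q,h}(P)/\rho_q(P)$ appearing in $\mathfrak{T}_{q,h}$: at primes $P\neq T$ they differ from $1$ by $O(|P|^{-2})$, so only the factor at $T$ survives in the $q\to\infty$ limit and should match the combinatorial constant up to $O(q^{-1/2})$, analogously to Proposition \ref{prop:limSqh}.

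For the large degree regime I would follow the Motohashi-Iwaniec strategy translated to $\mathbb{F}_q[T]$. The starting point is the identity $r_q=1*\chi_q$ noted after \eqref{eq:rdef}, together with the squarefull sieve that recovers $b_q$ from $r_q$ via Proposition \ref{prop:Fermat}. After expansion and a Dirichlet hyperbola argument, the correlation reduces to weighted counts of primes $P$ of degree $n$ satisfying $P\equiv -h\pmod d$ for $d$ ranging over divisors of bounded degree. A function field Siegel-Walfisz theorem (from Weil's RH applied in the Chebotarev setting of \cite{BBF}) handles short moduli, and a function field Bombieri-Vinogradov-type bound handles longer moduli; the unconditional GRH in function fields allows sharper character-sum estimates than are available over $\mathbb{Z}$.

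The hard part is uniformity. The large-$q$ estimate carries an implied constant depending on $n$, while the large-$n$ estimate carries an implied constant depending on $q$; neither alone covers the transitional region where $q$ and $n$ grow comparably. Obtaining a bound valid uniformly as $q^n\to\infty$ would require a Bombieri-Vinogradov theorem for the $\chi_q$-twist with error explicit in both $q$ and the modulus degree, together with Chebotarev-type bounds explicit in $n$. This bivariate uniformity is the analogue of the central difficulty in the classical Motohashi-Iwaniec problem, and in my view is the reason the assertion is posed as a conjecture rather than proved even in this more favorable function field setting.
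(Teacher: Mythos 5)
This statement is a \emph{conjecture}; the paper does not prove it. What the paper actually provides is (a) a heuristic derivation of the singular series $\mathfrak{T}_{q,h}$ based on the same ``i.i.d.\ modulo prime powers'' random model underlying Conjecture~\ref{conj}, explicitly left ``as an exercise for the reader,'' together with the local computations of Lemma~\ref{lem:rho}, and (b) a proof of the conjecture only in the large finite field limit, namely Theorem~\ref{thm:r_Lambda}. Your diagnosis that the conjecture is stated rather than proved, and that the large finite field case is the part amenable to the paper's machinery via Theorem~\ref{thm:general}, is therefore accurate.

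However, your description of the group-theoretic combinatorics in the large-$q$ case is not quite what the paper does, and the discrepancy matters. You suggest that for $h(0)=0$ one needs a nontrivial computation on the fiber product $\mathrm{H}_n^{(\{1,2\})}$ ``in the spirit of Lemma~\ref{lem:constant_sign}.'' Lemma~\ref{lem:constant_sign} does not apply here, because it requires all $\psi_i$ to be supported on $\chi^{-1}(1)$, and $1_{\mathbb{P}}$ is not (a prime polynomial can have $\chi_q(P)=-1$). The paper instead invokes Lemma~\ref{lem:cycle_type}: since $1_{\mathbb{P}}$ depends only on cycle type, the $1_{\mathbb{P}}$-coordinate factors off through the quotient $\mathrm{H}_n\to S_n$, and $\left<b,1_{\mathbb{P}}\right>_{\mathrm{H}_n^{\mathcal{I}_h}}=\left<b\right>_{\mathrm{H}_n}\cdot\left<1_{\mathbb{P}}\right>_{\mathrm{H}_n}$ \emph{for every} $h$, including $h(0)=0$. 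So the leading constant in the large finite field limit is always $1$; no $h$-dependent constant survives, and the factor $\mathfrak{T}_{q,h}$ in Theorem~\ref{thm:r_Lambda} enters only through $\mathfrak{T}_{q,h}=1+O(q^{-1})$ being absorbed into the error.

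Your discussion of the large degree regime (Siegel--Walfisz/Bombieri--Vinogradov in $\mathbb{F}_q[T]$, recovering $b_q$ from $r_q$) does not appear in the paper and has a substantive gap as a proof strategy: $b_q(f)=1_{r_q(f)>0}$ is not a Dirichlet convolution, so the hyperbola-method route applies to $r_q$ but not directly to $b_q$. This is precisely the half-dimensional sieve obstruction from Motohashi--Iwaniec, which remains a genuine barrier in function fields as well --- the availability of GRH does not resolve the parity problem in the sieve. Your conclusion that uniformity in $q^n\to\infty$ is the reason the statement is a conjecture is reasonable, but the obstruction is already present in the pure large degree regime, not merely in the transition region.
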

We note that $\mathfrak{T}_{q,h}$ converges. Next we give formulas for the $\rho$'s.
Note that $f$ is congruent to a prime modulo $P^\nu$
if and only if either $f\not\equiv 0\mbox{ mod }P$ or $f\equiv P\mbox{ mod }P^\nu$,
and therefore replacing the condition ``$f$ prime'' by ``$f\not\equiv 0\mbox{ mod }P$''
leads to the same limits and at the same time simplifies the computations.

\begin{lemma}\label{lem:rho}
Let $P\in\mathbb{F}_q[T]$ monic irreducible.
\begin{enumerate}
\item $\rho_q(P) = (1-|P|^{-1})\delta_{q,0}(P)$ (Recall that $\delta_{q,0}(P)$ is explicitly given by Corollary~\ref{lem:delta0}.)
\item If $\chi(P)=1$, then $\rho_{q,h}(P) = \rho_q(P)= 1-|P|^{-1}$. 
\item If $\chi(P)=-1$, then 
\[
\rho_{q,h}(P) = 
\begin{cases}
1-|P|^{-1}, & \mbox{if $P\mid h$}\\
1-|P|^{-1}-(1+|P|)^{-1}, &\mbox{otherwise.}
\end{cases}
\]
\item If $\chi(P)=  0$, then 
\[
\rho_{q,h}(P) = \frac{1}{2} - \frac{1}{2q} (1+\chi_q(h)).
\]
\end{enumerate}
\end{lemma}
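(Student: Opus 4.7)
The overall plan is to invoke the remark just preceding the lemma, which lets us replace the condition ``$f$ prime'' in the definitions of $\rho_{q,h}(P)$ and $\rho_q(P)$ by the simpler local condition ``$f\not\equiv 0\bmod P$''. Once this is done, every assertion reduces to a density computation in $\mathbb{F}_q[T]/(P^\nu)$, and Lemma \ref{lem:characterize} (and, where needed, Lemma \ref{lem:size_of_Aq}) does all the work.

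For (1), the two conditions ``$P\nmid f$'' and ``$g\in\mathcal{A}_q(P^\nu)$'' involve disjoint sets of variables, so the count factors: the density of the former is $1-|P|^{-1}$ and the density of the latter tends to $\delta_{q,0}(P)$ by definition, giving the product. For (2), Lemma \ref{lem:characterize}(\ref{lem:characterize3}) shows $\mathcal{A}_q(P^\nu)=\mathbb{F}_q[T]/(P^\nu)$ when $\chi_q(P)=1$, so the condition on $f+h$ is vacuous and $\rho_{q,h}(P)=1-|P|^{-1}$. In (3), if $P\mid h$ and $P\nmid f$, then $v_P(f+h)=0$ is even, so $f+h\in\mathcal{A}_q(P^\nu)$ automatically by Lemma \ref{lem:characterize}(1); hence again $\rho_{q,h}(P)=1-|P|^{-1}$. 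If instead $P\nmid h$, substitute $g=f+h$: the required density is that of $g\in\mathcal{A}_q(P^\nu)$ with $g\not\equiv h\bmod P$. The density of $g\in\mathcal{A}_q(P^\nu)$ is $\delta_{q,0}(P)=1-(|P|+1)^{-1}$, and since $h\not\equiv 0\bmod P$ every $g\equiv h\bmod P$ has $v_P(g)=0$ and thus lies in $\mathcal{A}_q(P^\nu)$, contributing density $|P|^{-1}$. Subtracting gives the claimed formula.

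For (4), $\chi_q(P)=0$ forces $P=T$. After substituting $g=f+h$, the condition becomes $g\in\mathcal{A}_q(T^\nu)$ together with $T\nmid g-h$. When $T\mid h$ this is $T\nmid g$ combined with $g\in\mathcal{A}_q(T^\nu)$, and Lemma \ref{lem:characterize}(2) (applied with $\alpha=0$) shows this holds precisely when $g(0)\in\mathbb{F}_q^{\times2}$, of density $(q-1)/(2q)$, matching the formula since $\chi_q(h)=0$. When $T\nmid h$ the condition is $g\in\mathcal{A}_q(T^\nu)$ and $g(0)\neq h(0)$; the unconditional density is $\delta_{q,0}(T)=1/2$, and we must subtract the density of $g\in\mathcal{A}_q(T^\nu)$ with $g(0)=h(0)$. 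Since $h(0)\ne 0$ we have $T\nmid g$ and Lemma \ref{lem:characterize}(2) says membership in $\mathcal{A}_q(T^\nu)$ is equivalent to $h(0)$ being a square, so the correction is $q^{-1}$ if $\chi_q(h)=1$ and $0$ if $\chi_q(h)=-1$. Both outcomes are captured uniformly by the expression $\tfrac{1}{2q}(1+\chi_q(h))$.

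I do not anticipate a serious obstacle: the existence of the limits is granted by the remark preceding the statement, and the whole lemma is a bookkeeping exercise built on Lemma \ref{lem:characterize}. The only mildly delicate point is the case-splitting in (3) and (4) on whether $P\mid h$, and checking in part (4) that the two formulas for $\chi_q(h)\in\{0,\pm1\}$ merge into the single closed form $\tfrac{1}{2}-\tfrac{1}{2q}(1+\chi_q(h))$.
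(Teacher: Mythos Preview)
Your proof is correct and follows essentially the same route as the paper: use the remark preceding the lemma to replace ``$f$ prime'' by ``$P\nmid f$'', then reduce everything to the local description of $\mathcal{A}_q(P^\nu)$ given in Lemma~\ref{lem:characterize}. The paper's own proof is just a one-line pointer to Lemma~\ref{lem:characterize} and to the arguments of Proposition~\ref{prop:Sfor0h}; your write-up is simply a fleshed-out version of that same computation.
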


\begin{proof}
(1) is immediate and (2) follows directly from Lemma~\ref{lem:characterize}\eqref{lem:characterize3} while 
(3) and (4) follows from Lemma~\ref{lem:characterize} using similar arguments as those used in the proof of Proposition~\ref{prop:Sfor0h}.
\end{proof}

Just like we deduced Proposition \ref{prop:limSqh} from Proposition \ref{lem:delta_chi_-1} and Corollary \ref{lem:delta0},
we can use Lemma~\ref{lem:rho} to conclude that
\[
\mathfrak{T}_{q,h} = 1+O(q^{-1})
\]
where the implied constant depends only on the degree of $h$. 
Since also $K_q=1+O(q^{-1})$, the next results proves Conjecture~\ref{conjecture:bLambda} in the large finite field limit.

\begin{theorem}
\label{thm:r_Lambda}
Fix $1\geq\epsilon>0$ and $n>2/\epsilon$.
Then for $q$ an odd prime power, 
$f_0\in\mathbb{F}_q[T]$ monic of degree $n$ and $h\in\mathbb{F}_q[T]$ of degree less than $n$,
\begin{eqnarray*}
  \left< 1_\mathbb{P}(f)b_q(f+h) \right>_{|f-f_0|<|f_0|^\epsilon} &=&
  \mathfrak{T}_{q,h}\cdot\left< 1_\mathbb{P}(f) \right>_{f\in M_{n,q}}\cdot\left< b_q(f) \right>_{f\in M_{n,q}} + O_{n}(q^{-1/2}) \\
 &=&  \frac{1}{n4^{n}}\left(2n\atop n\right) +O_{n}(q^{-1/2}) 
\end{eqnarray*}
where $\mathfrak{T}_{q,h}$ is defined as in \eqref{conj:lambdab} and the implied constant depends only on $n$.
\end{theorem}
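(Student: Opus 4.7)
The plan is to apply Theorem~\ref{thm:general} with $k=2$, $\psi_1=1_\mathbb{P}$ and $\psi_2=b$. This reduces the problem to evaluating
$$
 \left<1_\mathbb{P},b\right>_{\mathrm{H}_n^{\mathcal{I}_h}}
$$
modulo the error $O_n(q^{-1/2})$. Note that Lemma~\ref{lem:constant_sign} does \emph{not} apply directly here: $1_\mathbb{P}$ is a cycle-type function (taking value $1$ on $\sigma=x\tau$ precisely when $\tau$ is an $n$-cycle) and is not supported on $\chi^{-1}(1)$, in contrast to $b$.

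I would split into cases according to $h(0)$. If $h(0)\neq 0$, then $J=\{0,h(0)\}$ and both $I_a$ are singletons, so $\mathrm{H}_n^{\mathcal{I}_h}=\mathrm{H}_n\times\mathrm{H}_n$, and \eqref{eqn:psi1psi2} gives
$$
 \left<1_\mathbb{P},b\right>_{\mathrm{H}_n\times\mathrm{H}_n}=\left<1_\mathbb{P}\right>_{\mathrm{H}_n}\cdot\left<b\right>_{\mathrm{H}_n}.
$$
If $h(0)=0$, then $J=\{0\}$ and $I_0=\{1,2\}$, so $\mathrm{H}_n^{\mathcal{I}_h}=\mathrm{H}_n^{(\{1,2\})}$, the index-$2$ fiber product over $\chi$. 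Since $b$ is supported on $V_{n-1}\rtimes S_n=\ker\chi$, the fiber product condition $\chi(\sigma_1)=\chi(\sigma_2)$ forces $\sigma_1\in V_{n-1}\rtimes S_n$ whenever $b(\sigma_2)\neq 0$. Consequently the defining sum restricts to $(V_{n-1}\rtimes S_n)^2\subseteq\mathrm{H}_n^{(\{1,2\})}$, and since $[\mathrm{H}_n^{(\{1,2\})}:(V_{n-1}\rtimes S_n)^2]=2$, a direct calculation yields
$$
 \left<1_\mathbb{P},b\right>_{\mathrm{H}_n^{(\{1,2\})}}=\frac{1}{2}\cdot\left<1_\mathbb{P}\right>_{V_{n-1}\rtimes S_n}\cdot\left<b\right>_{V_{n-1}\rtimes S_n}.
$$

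Next I would compute the univariate averages. Since $1_\mathbb{P}(x\tau)=1$ iff $\tau$ is an $n$-cycle, \eqref{eqn:psi_epi} applied to the projection onto $S_n$ gives $\left<1_\mathbb{P}\right>_{\mathrm{H}_n}=\left<1_\mathbb{P}\right>_{V_{n-1}\rtimes S_n}=1/n$. Ewens' sampling formula, as in the proof of Theorem~\ref{thm:b_short}, gives $\left<b\right>_{\mathrm{H}_n}=\frac{1}{4^n}\binom{2n}{n}$, and since $b$ vanishes outside the index-$2$ subgroup $V_{n-1}\rtimes S_n$, $\left<b\right>_{V_{n-1}\rtimes S_n}=2\left<b\right>_{\mathrm{H}_n}$. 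Substituting, both cases collapse to the same value
$$
 \left<1_\mathbb{P},b\right>_{\mathrm{H}_n^{\mathcal{I}_h}}=\frac{1}{n\cdot 4^{n}}\binom{2n}{n},
$$
which, via Theorem~\ref{thm:general}, is exactly the second equality in the theorem.

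For the first equality I would substitute the asymptotics $\mathfrak{T}_{q,h}=1+O(q^{-1})$ (noted immediately before the statement), the prime polynomial theorem $\left<1_\mathbb{P}\right>_{f\in M_{n,q}}=1/n+O_n(q^{-1/2})$, and $\left<b_q\right>_{f\in M_{n,q}}=\frac{1}{4^n}\binom{2n}{n}+O(q^{-1/2})$ from \cite{BSW}, and compare with the second equality. The main obstacle is the fiber product case $h(0)=0$: one must combine the support constraint on $b$ with the fiber product condition to rewrite the average as a product of two one-variable averages, together with the correct index factor. Once this splitting is performed, the two cases give the same numerical answer and only routine bookkeeping remains.
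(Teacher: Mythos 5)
Your proof is correct. You apply Theorem~\ref{thm:general} with $k=2$, correctly observe that Lemma~\ref{lem:constant_sign} does not apply because $1_\mathbb{P}$ is not supported on $\ker\chi$, and then handle the fiber product $\mathrm{H}_n^{(\{1,2\})}$ directly: $b$ being supported on $\ker\chi$ combined with the fiber condition $\chi(\sigma_1)=\chi(\sigma_2)$ forces the sum onto $(V_{n-1}\rtimes S_n)^2$, an index-$2$ subgroup, and the index factor $\tfrac12$ exactly cancels the factor $2$ in $\left<b\right>_{V_{n-1}\rtimes S_n}=2\left<b\right>_{\mathrm{H}_n}$, giving the same answer as in the $h(0)\neq 0$ case. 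The paper instead invokes Lemma~\ref{lem:cycle_type} (the lemma immediately after the theorem), which packages exactly this phenomenon in general form: since $1_\mathbb{P}$ factors through the projection $\mathrm{H}_n\to S_n$, the map $\mathrm{H}_n^{\mathcal{I}_h}\to\mathrm{H}_n^{\mathcal{I}_h'}\times S_n$ is a surjection and $1_\mathbb{P}$ detaches cleanly from the correlation, avoiding any case split on $h(0)$. Your ad hoc argument and the paper's lemma are logically equivalent here; the lemma is simply more general (it works for any cycle-type function and any number of remaining factors) and would have saved you the case analysis. The closing asymptotic substitutions $\mathfrak{T}_{q,h}=1+O(q^{-1})$, $\left<1_\mathbb{P}\right>_{f\in M_{n,q}}=1/n+O_n(q^{-1/2})$, $\left<b_q\right>_{f\in M_{n,q}}=\tfrac{1}{4^n}\binom{2n}{n}+O(q^{-1/2})$ match the paper's.
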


\begin{proof}
Apply Theorem~\ref{thm:general} with $k=2$, $h_1=0$, $h_2=-h$, $\psi_1=b$, $\psi_2=1_\mathbb{P}$.
Note that $\psi_2$ satisfies the assumption of the following lemma,
which proves the first equality.
For the second equality use that 
$\mathfrak{T}_{q,h}=1+O(q^{-1})$ and
$\left<b\right>_{\mathrm{H}_n}=\frac{1}{4^n}\left(2n\atop n\right)$ (see above),
and that $\left<1_\mathbb{P}\right>_{\mathrm{H}_n}=\frac{1}{n}$, as this is the fraction of $n$-cycles in $S_n$.
\end{proof}

\begin{lemma}
\label{lem:cycle_type}
Let $\psi_1,\dots,\psi_k\in\Lambda^*$ and 
let $\mathcal{I}=(I_j)_{j\in J}$ be a partition of $\{1,\ldots, k\}$ as above.
If $\psi_k$ depends only on cycle type in the sense that $\psi_k(d,1,1)=\psi_k(d,1,-1)$ for all $d$, then
$$
 \left<\psi_1,\dots,\psi_k\right>_{\mathrm{H}_n^{\mathcal{I}}}
  = \left<\psi_1,\dots,\psi_{k-1}\right>_{\mathrm{H}_n^{\mathcal{I}'}}\cdot\left<\psi_k\right>_{\mathrm{H}_n},
$$
where $\mathcal{I}'=(I_j')_{j\in J'}$, $I_j'=I_j\setminus\{k\}$, $J'=\{j\in J:I_j'\neq\emptyset\}$.
\end{lemma}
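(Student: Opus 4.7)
The plan is to integrate out the $k$-th coordinate. Let $j_0\in J$ be the unique index with $k\in I_{j_0}$, and consider the surjective projection
\[
 \pi\colon \mathrm{H}_n^{\mathcal{I}}\longrightarrow \mathrm{H}_n^{\mathcal{I}'}
\]
that forgets the $(j_0,k)$-coordinate; note that this matches the statement, since when $|I_{j_0}|=1$ the index $j_0$ drops out of $J'$, while when $|I_{j_0}|\geq 2$ it remains with $I_{j_0}$ replaced by $I_{j_0}\setminus\{k\}$.

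First I would describe the fibers of $\pi$. If $|I_{j_0}|=1$, the $k$-th coordinate is unconstrained and every fiber equals $\mathrm{H}_n$. If $|I_{j_0}|\geq 2$, then for $\tilde\sigma\in \mathrm{H}_n^{\mathcal{I}'}$ the coordinates $(\sigma_{j_0,i})_{i\in I_{j_0}\setminus\{k\}}$ share a common total sign $\epsilon(\tilde\sigma)\in\{\pm 1\}$, so the fiber is the coset $\chi^{-1}(\epsilon(\tilde\sigma))\subseteq\mathrm{H}_n$ of size $|\mathrm{H}_n|/2$. In either case $|\mathrm{H}_n^{\mathcal{I}}|$ equals $|\mathrm{H}_n^{\mathcal{I}'}|$ times the common fiber size.

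The key step uses the cycle-type hypothesis to conclude that $\psi_{k,n}(x\tau)$ depends only on $\tau$: the signed factorization type $\lambda_{x\tau}$ is obtained from the cycle decomposition of $\tau$ by attaching a sign in $\{\pm 1\}$ to each orbit, and the condition $\psi_k(d,1,1)=\psi_k(d,1,-1)$ makes $\psi_k(\lambda_{x\tau})$ invariant under flipping any such sign, hence independent of $x$. Consequently
\[
 \sum_{\sigma_k\in \chi^{-1}(\epsilon)}\psi_{k,n}(\sigma_k) \;=\; |V_{n-1}|\sum_{\tau\in S_n}\psi_{k,n}(\tau) \;=\; \tfrac{|\mathrm{H}_n|}{2}\left<\psi_k\right>_{\mathrm{H}_n}
\]
is the same for both $\epsilon\in\{\pm 1\}$, and analogously $\sum_{\sigma_k\in\mathrm{H}_n}\psi_{k,n}(\sigma_k)=|\mathrm{H}_n|\left<\psi_k\right>_{\mathrm{H}_n}$.

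To finish, I would decompose the defining sum of $\left<\psi_1,\dots,\psi_k\right>_{\mathrm{H}_n^{\mathcal{I}}}$ along $\pi$, pull the product $\prod_{i<k}\psi_{i,n}(\pi_i(\tilde\sigma))$ outside the inner sum over the fiber $\pi^{-1}(\tilde\sigma)$, and substitute the $\tilde\sigma$-independent value computed above. Combined with the fiber-size identity this yields precisely $\left<\psi_1,\dots,\psi_{k-1}\right>_{\mathrm{H}_n^{\mathcal{I}'}}\cdot\left<\psi_k\right>_{\mathrm{H}_n}$. The main (and essentially only) obstacle is the $\epsilon$-independence of the inner sum; without the cycle-type hypothesis one would be left with a conditional average over a single coset of $V_{n-1}\rtimes S_n$, which in general differs from the full $\mathrm{H}_n$-average.
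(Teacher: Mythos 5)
Your proof is correct and is essentially the same argument as the paper's: the key observation in both is that the cycle-type hypothesis makes $\psi_{k,n}$ factor through the quotient $\pi\colon\mathrm{H}_n\to S_n$, so that its conditional average over either sign-coset of $V_{n-1}\rtimes S_n$ equals its full $\mathrm{H}_n$-average. The paper packages the fiber computation you carry out by hand into a single epimorphism $\mathrm{H}_n^{\mathcal{I}}\twoheadrightarrow\mathrm{H}_n^{\mathcal{I}'}\times S_n$ and then invokes the two averaging principles \eqref{eqn:psi1psi2}--\eqref{eqn:psi_epi}, but the underlying content is identical.
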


\begin{proof}
Let $\pi\colon \mathrm{H}_n\rightarrow S_n$ be the quotient map.
The assumption implies that $\psi_{k,n}=\tilde{\psi}_{k,n}\circ\pi$
with a function $\tilde{\psi}_{k,n}\colon S_n\rightarrow\mathbb{C}$.
Observe that restricting the homomorphism 
$$
 {\rm id}_{\mathrm{H}_n^{\mathcal{I}'}}\times\pi\colon \mathrm{H}_n^{\mathcal{I}'}\times \mathrm{H}_n\rightarrow\mathrm{H}_n^{\mathcal{I}'}\times S_n
$$
gives an epimorphism $\mathrm{H}_n^\mathcal{I}\rightarrow \mathrm{H}_n^{\mathcal{I}'}\times S_n$.
Thus,
\begin{eqnarray*}
 \left<\psi_1,\dots,\psi_k\right>_{\mathrm{H}_n^{\mathcal{I}}}
 &\stackrel{(\ref{eqn:psi_epi})}{=}&\left<\prod_{i=1}^{k-1}\psi_{i,n}(\pi_i(\sigma))\cdot\tilde{\psi}_{k,n}(\pi_k(\sigma))\right>_{\sigma\in\mathrm{H}_n^{\mathcal{I}'}\times S_n}\\
 &\stackrel{(\ref{eqn:psi1psi2})}{=}&\left<\prod_{i=1}^{k-1}\psi_{i,n}(\pi_i(\sigma))\right>_{\sigma\in\mathrm{H}_n^{\mathcal{I}'}}\cdot\left<\tilde{\psi}_{k,n}(\sigma)\right>_{\sigma\in S_n}\\
&\stackrel{(\ref{eqn:psi_epi})}{=}& \left<\psi_1,\dots,\psi_{k-1}\right>_{\mathrm{H}_n^{\mathcal{I}'}}\cdot\left<\psi_k\right>_{\mathrm{H}_n}.
\end{eqnarray*}
\end{proof}

Note that although in Theorem \ref{thm:r_Lambda} the functions $b_q$ and $1_{\mathbb{P}}$ become independent in the large finite field limit,
this could not have been deduced from the earlier results in \cite{ABR} (see the remark after Theorem \ref{thm:general}),
as only one of the two arithmetic functions, namely $1_\mathbb{P}$, depends only on cycle type, 
while the other one does not.

%\subsubsection{$\lambda$, $b$}
%Banks, William D. (1-MO); Luca, Florian; Saidak, Filip (1-MO);
%Shparlinski, Igor E. (5-MCQR-CP)
%Values of arithmetical functions equal to a sum of two squares. (English
%summary)
%Q. J. Math. 56 (2005), no. 2, 123{139.
%Conjecture by Motohashi and Iwaniec on $\left<b(n)\lambda(n+1)\right>$
%thm 14.8 in Friedlander-Iwaniec: they compute $\left<\lambda(n)b(n+a)\right>$ with fixed $a\equiv 1(4)$ (and more general)
%(however, note that their $b$ has $b(n)=0$ if $n$ has {\em any} prime factor $\equiv3(4)$)
%\subsubsection{$\lambda$, $r$}
%Bredihin, B. M.
%Binary additive problems with prime numbers. (Russian)
%They study $\left<r(n)\lambda(n+a)\right>$ for fixed $a$. Like Hooley, but unconditional.
%Similarly for $\Lambda$, $r$ in Lemma 14.14 of Friedlander-Iwaniec?
%They compute 
%$$
% \sum_{n\leq x,\;n\equiv a(q)}r(n+2)\Lambda(n)
%$$
%with $(a,q)=1$, $q\equiv0(4)$, $a\equiv3(4)$.
%If we set $q=4$, $a=3$, then this is really 
%$\left< r(n+2)\Lambda(n)\right>$,
%at least if we ignore non-squarefrees (since then $\Lambda(n)\neq0$ excludes residue $2$ and $4$, and $r(n+2)\neq0$ excludes residue $1$).

\subsection{Autocorrelations of $d_r\chi$}
\label{sec:dkchi}

Let $d_r(n)$ be the number of ways to write $n$ as a product of $r$ positive integers. 
In particular, $d_2=\tau$ is the usual divisor function. 
The problem of estimating the autocorrelations of $d_r$, sometimes referred to as `additive divisor problem', `shifted divisor', or `shifted convolution', is well studied both in number fields 
(see e.g.~\cite{ConreyGonek,KeatingGonekHughes,ConreyKeatingIV} and the recent survey \cite{BlogTao})
and function fields (see \cite{ABR}).
The asymptotic of the cross-correlations of the divisor functions are related to computing the moments of the zeta function on the critical line, see \cite{Ivic}. 
We consider a twisted version of this; namely, we twist $d_r$ by a quadratic character $\chi$ and study the cross-correlations of the $d_r \chi$. 
This is closely related to the moments of the corresponding $L$-function $L(s,\chi)$ on the critical line, but we do not elaborate on this any further, since the goal of this section is to provide yet another application of Theorem~\ref{thm:general}.

In the function field setting, we let $d_{r,q}(f)$ denote the number of ways to write $f$ as a product of $r$ monic polynomials,
and we twist $d_{r,q}$ by $\chi_q$, the quadratic character modulo $T$.
We note that in this setting,
computing the moments of the corresponding L-function $L_q(s,\chi_q)$ is trivial, 
since $L_q(s,\chi)$ is identically equal to $1$,
but the cross-correlations of the $d_{r,q}\chi_q$ are nevertheless interesting
and, to the best of our knowledge, unknown.
We now compute these cross-correlations in the large finite field limit,
which we can do as $d_{r,q}$ is induced from $d_r\in\Lambda^*$ given by
$$
 d_r(\lambda)=\prod_{d,e,s}\left(e+r-1\atop r-1\right)^{\lambda(d,e,s)}.
$$

\begin{lemma}
\label{lem:dr_average}
For every $r$ and $n$,
\begin{eqnarray*}
 \left<d_r\right>_{\mathrm{H}_n} &=& \left(n+r-1\atop r-1 \right).
\end{eqnarray*}
\end{lemma}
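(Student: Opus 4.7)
The plan is to reduce the average over $\mathrm{H}_n$ to an average over $S_n$, and then to identify the resulting sum with a classical generating function for cycle counts in the symmetric group.

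First I would unpack the function $d_r$ on signed factorization types $\lambda_\sigma$. For any $\sigma=x\tau\in\mathrm{H}_n$ with $x\in V$ and $\tau\in S_n$, the definition \eqref{def:lambdasig} forces $\lambda_\sigma(d,e,s)=0$ whenever $e\neq 1$. Thus in the product defining $d_r(\lambda_\sigma)$ only the factors with $e=1$ survive, and each such factor equals $\binom{1+r-1}{r-1}=r$. Hence
\begin{equation*}
 d_r(\lambda_\sigma) \;=\; r^{\sum_{d,s}\lambda_\sigma(d,1,s)} \;=\; r^{c(\tau)},
\end{equation*}
where $c(\tau)$ denotes the number of cycles of $\tau$, since the orbits of $\tau$ are partitioned according to their length $d$ and sign $s$.

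Next I would exploit that this quantity depends only on $\tau$, not on $x$. Summing first over $x\in V=\mathbb{F}_2^n$ gives a factor $2^n=\#V$, which cancels against $\#\mathrm{H}_n=2^n\cdot n!$:
\begin{equation*}
 \langle d_r\rangle_{\mathrm{H}_n} \;=\; \frac{1}{\#\mathrm{H}_n}\sum_{\tau\in S_n}\sum_{x\in V}r^{c(\tau)} \;=\; \frac{1}{n!}\sum_{\tau\in S_n}r^{c(\tau)}.
\end{equation*}

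Finally I would invoke the standard identity for the generating polynomial of unsigned Stirling numbers of the first kind,
\begin{equation*}
 \sum_{\tau\in S_n}x^{c(\tau)} \;=\; x(x+1)(x+2)\cdots(x+n-1),
\end{equation*}
which can be proved in one line by induction (insert $n$ into an arbitrary position in a permutation of $\{1,\dots,n-1\}$: either as a new fixed point, contributing $x$, or into one of $n-1$ existing cycles, contributing $n-1$). Specializing at $x=r$ gives $\sum_{\tau}r^{c(\tau)}=r(r+1)\cdots(r+n-1)=n!\binom{n+r-1}{r-1}$, so dividing by $n!$ yields the desired formula. There is no real obstacle here; the only thing to be careful about is the verification that $d_r$ truly reduces to $r^{c(\tau)}$ on $\mathrm{H}_n$, which rests on the fact that the signed factorization type of an element of $\mathrm{H}_n$ is concentrated in $e=1$.
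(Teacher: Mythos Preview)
Your proof is correct and shares the same first step as the paper's: both reduce the average over $\mathrm{H}_n$ to $\frac{1}{n!}\sum_{\tau\in S_n}r^{c(\tau)}$ by observing that $d_r(\lambda_\sigma)$ depends only on the underlying permutation $\tau$.

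The difference lies in how this cycle-counting sum is evaluated. You invoke the classical rising-factorial identity $\sum_{\tau\in S_n}x^{c(\tau)}=x(x+1)\cdots(x+n-1)$, proved by a one-line induction on $n$ (inserting the symbol $n$). The paper instead interprets $r^{\omega(\tau)}$ as the number of ordered $r$-tuples $(A_1,\dots,A_r)$ of sets partitioning $\{1,\dots,n\}$ such that each $A_i$ is a union of orbits of $\tau$, switches the order of summation, and then inducts on $r$ via the hockey-stick identity. Your route is shorter and appeals to a standard generating-function fact; the paper's route is more self-contained and has the mild conceptual advantage that it directly exhibits the connection with writing an element as an ordered product of $r$ factors (mirroring the arithmetic definition of $d_{r,q}$). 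Either argument is entirely adequate here.
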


\begin{proof}
Since $d_{r,n}$ factors through $\pi\colon \mathrm{H}_n\rightarrow S_n$, we have
\begin{eqnarray*}
 \left<d_r\right>_{\mathrm{H}_n} 
 &=&\frac{1}{2^nn!}\sum_{\sigma\in\mathrm{H}_n} \left(1+r-1\atop r-1\right)^{\sum_{d,s}\lambda_\sigma(d,1,s)}
 \;=\; \frac{1}{n!}\sum_{\tau\in S_n}r^{\omega(\tau)}
\end{eqnarray*}
where $\omega(\tau)$ is the number of cycles of $\tau$. 
Viewing $r^{\omega(\tau)}$ as the number of partitions of $\{1,\dots,n\}$ into $r$ sets that 
are unions of orbits of $\tau$ and changing order of summation we get that
\begin{eqnarray*}
\sum_{\tau\in S_n}r^{\omega(\tau)} &=& \sum_{\{1,\dots,n\}=\coprod_{i=1}^rA_i}\prod_{i=1}^r\#A_i!
\end{eqnarray*}
Splitting the sum according to the cardinality of $A_1$ and applying induction on $r$ we conclude
\begin{eqnarray*}
 \sum_{\{1,\dots,n\}=\coprod_{i=1}^rA_i}\prod_{i=1}^r\#A_i! &=& \sum_{a=0}^n\left(n\atop a\right)a!(n-a)!\left(n-a+r-2\atop r-2\right)\\
  &=& n!\sum_{a=0}^n\left(a+r-2\atop r-2\right) \;=\; n!\left(n+r-1\atop r-1\right).
\end{eqnarray*}
\end{proof}

\begin{lemma}
\label{lem:psichi_average}
Let $\psi\in\Lambda^*$ with $\psi(d,1,1)=\psi(d,1,-1)$ for all $d$.
Then $\left<\psi\chi\right>_{\mathrm{H}_n}=0$.
\end{lemma}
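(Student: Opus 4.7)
The plan is to observe that the character $\chi$ on $\mathrm{H}_n$ is really a character of the quotient $V$, so the product $\psi\chi$ splits as a $\tau$-dependent factor times a $V$-character whose sum over $V$ vanishes.

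First I would unpack the two factors. Recall that each $\sigma\in\mathrm{H}_n$ can be uniquely written as $\sigma=x\tau$ with $x\in V=(\mathbb{F}_2)^n$ and $\tau\in S_n$. Directly from the definition of $\lambda_\sigma$ in \eqref{def:lambdasig} together with \eqref{eqn:def_chi}, one checks that
\[
  \chi(\lambda_\sigma)=(-1)^{\sum_d\lambda_\sigma(d,1,-1)}=\prod_{\Omega}(-1)^{\sum_{i\in\Omega}x_i}=(-1)^{\sum_i x_i}=\ts_n(\sigma),
\]
where $\Omega$ ranges over the orbits of $\tau$. So the factor $\chi$ is the total-sign homomorphism, and in particular depends only on $x$.

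Next I would use the hypothesis $\psi(d,1,1)=\psi(d,1,-1)$ to show that $\psi_n\colon\mathrm{H}_n\to\mathbb{C}$ factors through the projection $\pi\colon\mathrm{H}_n\to S_n$, exactly as in the argument at the start of the proof of Lemma~\ref{lem:cycle_type}. That is, there is $\tilde{\psi}_n\colon S_n\to\mathbb{C}$ with $\psi_n(x\tau)=\tilde{\psi}_n(\tau)$ for every $x\in V$, $\tau\in S_n$, because replacing $x$ by $x'$ only permutes the signs attached to the orbits of $\tau$ and $\psi$ is insensitive to this.

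Putting these two observations together, the sum factors:
\[
  \sum_{\sigma\in\mathrm{H}_n}\psi_n(\sigma)\chi(\sigma)
  =\sum_{\tau\in S_n}\tilde{\psi}_n(\tau)\cdot\sum_{x\in V}(-1)^{\sum_i x_i}
  =\sum_{\tau\in S_n}\tilde{\psi}_n(\tau)\cdot\prod_{i=1}^n\bigl(1+(-1)\bigr)=0,
\]
since the inner sum is $0^n=0$ for $n\geq 1$. Dividing by $\#\mathrm{H}_n$ gives $\left<\psi\chi\right>_{\mathrm{H}_n}=0$. There is no real obstacle here; the only subtlety is matching the combinatorial definition of $\chi$ on signed factorization types with the total-sign homomorphism $\ts_n$, which is immediate from the definitions.
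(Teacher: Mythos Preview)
Your proof is correct and is essentially the same argument as the paper's: both factor $\psi_n$ through the projection $\pi\colon\mathrm{H}_n\to S_n$, group the sum by $\tau\in S_n$, and observe that the inner sum of $\chi$ over the fiber $\pi^{-1}(\tau)$ vanishes. The only cosmetic difference is that you compute the inner sum explicitly as $\prod_{i=1}^n(1+(-1))=0$, whereas the paper just notes that $\chi_n$ takes each value $\pm1$ on exactly half of the $2^n$ elements of each fiber.
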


\begin{proof}
Write $\psi_n=\tilde{\psi}_n\circ\pi$ with $\pi\colon \mathrm{H}_n\rightarrow S_n$ the quotient map. Then
$$
 \sum_{\sigma\in\mathrm{H}_n}\psi_n(\sigma)\chi_n(\sigma) = \sum_{\tau\in S_n}\sum_{\sigma\in\pi^{-1}(\tau)}\psi_n(\sigma)\chi_n(\sigma)
 =\sum_{\tau\in S_n}\tilde{\psi}_n(\tau)\sum_{\sigma\in\pi^{-1}(\tau)}\chi_n(\sigma)=0,
$$
since $\chi_n(\sigma)=1$ for half of the $2^n$ many $\sigma$ in each $\pi^{-1}(\tau)$,
and $\chi_n(\sigma)=-1$ for the other half.
\end{proof}

\begin{lemma}
\label{lem:twisted}
Let $\psi_1,\dots,\psi_k\in\Lambda^*$ and let $\mathcal{I}=(I_j)_{j\in J}$ be a partition of $\{1,\dots,k\}$.
If $\psi_i(d,1,1)=\psi_i(d,1,-1)$ for all $i$ and $d$,
then
\begin{eqnarray*}
 \left<\psi_1\chi,\dots,\psi_k\chi\right>_{\mathrm{H}_n^\mathcal{I}}&=& 
 \begin{cases}\left<\psi_1\right>_{\mathrm{H}_n}\cdots\left<\psi_k\right>_{\mathrm{H}_n},&\mbox{if }\#I_j\mbox{ is even for all }j\in J\\0,&\mbox{otherwise}\end{cases}.
\end{eqnarray*}
\end{lemma}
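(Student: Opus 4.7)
The plan is to separate the $\chi$-dependence from the $\psi_i$-dependence, using that within $\mathrm{H}_n^\mathcal{I}$ the total sign is constant across each block $I_j$, while each $\psi_{i,n}$ factors through the projection $\pi\colon\mathrm{H}_n\rightarrow S_n$ (the observation made in the proof of Lemma \ref{lem:cycle_type}). Concretely, for $\sigma=(\sigma_i)\in\mathrm{H}_n^\mathcal{I}$, if $i_j$ denotes a chosen element of $I_j$, then
$$
 \prod_{i=1}^k\chi_n(\sigma_i)=\prod_{j\in J}\chi_n(\sigma_{i_j})^{\#I_j},
$$
so the product equals $1$ identically when every $\#I_j$ is even, and equals $\prod_{j\in J_{\mathrm{odd}}}\chi_n(\sigma_{i_j})$ otherwise, where $J_{\mathrm{odd}}=\{j:\#I_j\text{ odd}\}$.

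In the first case one has $\left<\psi_1\chi,\dots,\psi_k\chi\right>_{\mathrm{H}_n^\mathcal{I}}=\left<\psi_1,\dots,\psi_k\right>_{\mathrm{H}_n^\mathcal{I}}$, and I would finish by iterating Lemma \ref{lem:cycle_type} a total of $k$ times (each $\psi_i$ depends only on cycle type by hypothesis), peeling off one factor at each step to arrive at $\prod_i\left<\psi_i\right>_{\mathrm{H}_n}$. In the second case I would invoke (\ref{eqn:psi1psi2}) together with the direct product decomposition $\mathrm{H}_n^\mathcal{I}=\prod_{j\in J}\mathrm{H}_n^{(I_j)}$ to write the whole average as a product of averages over the individual $\mathrm{H}_n^{(I_j)}$, and show that whenever $\#I_j$ is odd the corresponding factor vanishes. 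Fixing the $S_n$-projections $(\tau_i)_{i\in I_j}$, the product $\prod_{i\in I_j}\psi_{i,n}(\sigma_i)=\prod_{i\in I_j}\tilde\psi_{i,n}(\tau_i)$ is independent of the $V$-components $x_i$, so only the single factor $\chi_n(x_{i_j}\tau_{i_j})=(-1)^{|x_{i_j}|}$ sees the $V$-coordinates. The fiber above $(\tau_i)$ in $\mathrm{H}_n^{(I_j)}$ is $\{(x_i)\in V^{I_j}:|x_i|\equiv s\pmod2\text{ for all }i\}$ for a common $s\in\{0,1\}$; each choice of $s$ contributes $(2^{n-1})^{\#I_j}$ tuples with sign $(-1)^s$, and since $\#I_j$ is odd the two contributions cancel.

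The only delicate point is bookkeeping the fiber-product constraint correctly—keeping track of which $V$-parities are free and which are tied together—but once that is in place the cancellation is essentially immediate. As a sanity check, the result specializes to Lemma \ref{lem:psichi_average} when $k=1$ (forcing $\mathcal{I}=(\{1\})$ with $\#I_1=1$ odd), so the output is $0$, matching.
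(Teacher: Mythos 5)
Your proposal is correct, but it takes a genuinely different route from the paper. The paper first reduces to the single-block case $\#J=1$ via (\ref{eqn:psi1psi2}), then writes $\psi_i\chi=\psi_i(\chi+1)-\psi_i$ and expands by multilinearity; it applies Lemma~\ref{lem:constant_sign} to the $\psi_i(\chi+1)$ terms (which are supported on $\chi^{-1}(1)$) and Lemma~\ref{lem:cycle_type} to the $\psi_i$ terms, and then evaluates the resulting alternating sum $1+\sum_{\emptyset\neq S}(-1)^{\#S}2^{\#S-1}$ by an inclusion-exclusion computation, which produces $1$ for even $k$ and $0$ for odd $k$. You instead exploit the fiber-product constraint directly: since $\ts$ is constant on each block, $\prod_i\chi_n(\pi_i(\sigma))$ collapses to $\prod_{j:\#I_j\text{ odd}}\chi_n(\pi_{i_j}(\sigma))$; when every block has even size this is identically $1$ and you simply iterate Lemma~\ref{lem:cycle_type}, and when some block is odd you factor via (\ref{eqn:psi1psi2}) over $\mathrm{H}_n^{\mathcal I}=\prod_j\mathrm{H}_n^{(I_j)}$ and kill the odd block's factor by a fiberwise parity cancellation over the $S_n$-projections, using that $\psi_{i,n}$ factors through $\pi\colon\mathrm{H}_n\to S_n$. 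Both arguments are sound; yours is more structural and makes the cancellation mechanism explicit, avoids Lemma~\ref{lem:constant_sign} entirely, and skips the combinatorial identity at the end, while the paper's has the advantage of immediately reusing the two auxiliary lemmas already in place. One small stylistic note: in your odd-block argument the parity of $\#I_j$ is already spent in reducing $\prod_{i\in I_j}\chi_n$ to the single factor $\chi_n(\sigma_{i_j})$, and the two sign classes of each fiber then cancel unconditionally; the phrase ``since $\#I_j$ is odd the two contributions cancel'' slightly misplaces where the oddness is used, though the argument is correct.
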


\begin{proof}
First observe that since $\mathrm{H}_n^\mathcal{I}=\prod_{j\in J}\mathrm{H}_n^{(I_j)}$,
by principle (\ref{eqn:psi1psi2}) it suffices to prove the claim in the case $\#J=1$,
which we therefore assume now.
Note that $\left<\psi_i\chi\right>_{\mathrm{H}_n}=0$ by Lemma \ref{lem:psichi_average}
and thus $\left<\psi_i(\chi+1)\right>_{\mathrm{H}_n}=\left<\psi_i\right>_{\mathrm{H}_n}$.
So since $\psi_i(\chi+1)$ satisfies the assumptions of Lemma \ref{lem:constant_sign}
and $\psi_i$ satisfies the assumptions of Lemma \ref{lem:cycle_type}, we get
\begin{eqnarray*}
\left<\psi_1\chi,\dots,\psi_k\chi\right>_{\mathrm{H}_n^\mathcal{I}} 
&=& \left<\psi_1(\chi+1)-\psi_1,\dots,\psi_k(\chi+1)-\psi_k\right>_{\mathrm{H}_n^\mathcal{I}}\\
&=& \prod_{i=1}^k\left<-\psi_i\right>_{\mathrm{H}_n}+\sum_{\emptyset\neq S\subseteq\{1,\dots,k\}}\left(\prod_{i\notin S}\left<-\psi_i\right>_{\mathrm{H}_n}\cdot 2^{\#S-1}\prod_{i\in S}\left<\psi_i(\chi+1)\right>_{\mathrm{H}_n}\right)\\
&=& (-1)^k\cdot\prod_{i=1}^k\left<\psi_i\right>_{\mathrm{H}_n}\cdot\left(1+\sum_{\emptyset\neq S\subseteq\{1,\dots,k\}}(-1)^{\#S}2^{\#S-1}\right).
\end{eqnarray*}
Now note that 
\begin{eqnarray*}
 \sum_{S\subseteq\{1,\dots,k\}}(-1)^{\#S}2^{\#S} 
&=&  \sum_{ S\subseteq\{1,\dots,k\}}\sum_{S_0\subseteq S}(-1)^{\#S}
=  \sum_{S_0\subseteq\{1,\dots,k\}} \sum_{S_0\subseteq S\subseteq\{1,\dots,k\}}(-1)^{\#S}
\end{eqnarray*} 
and $\sum_{S_0\subseteq S\subseteq\{1,\dots,k\}}(-1)^{\#S}$ equals $0$ except if $S_0=\{1,\dots,k\}$,
in which case it equals $(-1)^k$. Thus,
$$
 \left<\psi_1\chi,\dots,\psi_k\chi\right>_{\mathrm{H}_n^\mathcal{I}}  
= (-1)^k\cdot\prod_{i=1}^k\left<\psi_i\right>_{\mathrm{H}_n}\cdot (1+\frac{1}{2}((-1)^k-1)),
$$
from which the claim follows.
\end{proof}

\begin{theorem}
\label{thm:dkchi}
Fix $k\geq 1$, $r_1,\dots,r_k\geq2$, $1\geq\epsilon>0$ and $n>2\epsilon^{-1}$.
Then for $q$ an odd prime power, 
$f_0\in\mathbb{F}_q[T]$ monic of degree $n$ and $h_1,\dots,h_k\in\mathbb{F}_q[T]$ of degree less than $n$ and pairwise distinct,
\begin{eqnarray*}
  \left< \prod_{i=1}^k d_{r_i,q}\chi_q(f+h_i) \right>_{|f-f_0|<|f_0|^\epsilon} 
  &=&\mathfrak{D}_h\cdot\prod_{i=1}^k\left<d_{r_i,q}(f)\right>_{f\in M_{n,q}} + O(q^{-1/2})\\
 &=&\mathfrak{D}_h\cdot\prod_{i=1}^k\left(n+r_i-1\atop r_i-1\right) + O(q^{-1/2})
\end{eqnarray*}
where the implied constant depends only on $n$ and $r_1,\dots,r_k$, and
$$
 \mathfrak{D}_h = \begin{cases}1,&\mbox{if }\#\{i:h_i(0)=a\}\mbox{ is even for all }a\in\mathbb{F}_q \\ 0,&\mbox{otherwise}\end{cases}
$$
\end{theorem}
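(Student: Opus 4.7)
The plan is to derive Theorem~\ref{thm:dkchi} as a direct application of Theorem~\ref{thm:general} combined with Lemma~\ref{lem:twisted}, with essentially no new work beyond verifying that the hypotheses apply. Concretely, I would apply Theorem~\ref{thm:general} with $\psi_i=d_{r_i}\chi$ for $i=1,\dots,k$ (noting that the assumption $n>2\epsilon^{-1}$ is already in the statement), which immediately gives
$$
\left< \prod_{i=1}^k d_{r_i,q}\chi_q(f+h_i) \right>_{|f-f_0|<|f_0|^\epsilon}
= \left<d_{r_1}\chi,\dots,d_{r_k}\chi\right>_{\mathrm{H}_n^{\mathcal{I}_h}}+O(q^{-1/2}),
$$
so that the remaining task is to evaluate the group-theoretic average on the right.

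To evaluate this average, I would invoke Lemma~\ref{lem:twisted}; its hypothesis is satisfied because each $d_{r_i}$ depends only on cycle type. Indeed, since $\lambda_\sigma(d,e,s)=0$ whenever $e>1$ or $s=0$ (by the definition in \eqref{def:lambdasig}), the formula for $d_r$ reduces to $d_{r,n}(\sigma)=r^{\omega(\pi(\sigma))}$, where $\pi\colon\mathrm{H}_n\to S_n$ is the quotient map and $\omega(\tau)$ is the number of cycles of $\tau$; in particular $d_{r_i}(d,1,1)=d_{r_i}(d,1,-1)=r_i$ for all $d$. Lemma~\ref{lem:twisted} then yields
$$
\left<d_{r_1}\chi,\dots,d_{r_k}\chi\right>_{\mathrm{H}_n^{\mathcal{I}_h}}
=\begin{cases}\prod_{i=1}^k\left<d_{r_i}\right>_{\mathrm{H}_n},&\mbox{if }\#I_a\mbox{ is even for all }a\in J,\\ 0,&\mbox{otherwise.}\end{cases}
$$
Recalling from \eqref{eqn:def_I_h} that $I_a=\{i:h_i(0)=a\}$ for $a\in J=\{h_1(0),\dots,h_k(0)\}$, and observing that $\#\{i:h_i(0)=a\}=0$ is trivially even for $a\in\mathbb{F}_q\setminus J$, the case distinction above coincides exactly with the definition of $\mathfrak{D}_h$.

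Finally, by Lemma~\ref{lem:dr_average} we have $\left<d_{r_i}\right>_{\mathrm{H}_n}=\left(n+r_i-1\atop r_i-1\right)$, and the exact identity $\left<d_{r_i,q}(f)\right>_{f\in M_{n,q}}=\left(n+r_i-1\atop r_i-1\right)$ follows at once from the generating function
$$
\sum_{f\in M_q}d_{r,q}(f)\,x^{\deg f}=\Bigl(\sum_{m\geq 0}q^mx^m\Bigr)^{r}=(1-qx)^{-r},
$$
whose $x^n$-coefficient is $\left(n+r-1\atop r-1\right)q^n$. Combining these ingredients gives both displayed equalities. The whole argument is essentially bookkeeping: all the genuinely substantial content---the Chebotarev-style Galois computation over the fiber product of hyperoctahedral groups in Section~\ref{sec:signed} and the inclusion-exclusion behind Lemma~\ref{lem:twisted}---has already been established, so there is no significant obstacle to surmount; the only mild subtlety is matching the even-cardinality condition on the partition $\mathcal{I}_h$ with the condition defining $\mathfrak{D}_h$.
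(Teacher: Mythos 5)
Your proposal is correct and follows essentially the same route as the paper: apply Theorem~\ref{thm:general} with $\psi_i=d_{r_i}\chi$, invoke Lemma~\ref{lem:twisted} to collapse the group-theoretic average to $\mathfrak{D}_h\prod_i\langle d_{r_i}\rangle_{\mathrm{H}_n}$, and finish with Lemma~\ref{lem:dr_average}. You also spell out two points the paper's proof leaves implicit---verifying that $d_r$ depends only on cycle type, and the generating-function identity $\langle d_{r,q}\rangle_{f\in M_{n,q}}=\binom{n+r-1}{r-1}$ needed for the first displayed equality---both of which are correct.
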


\begin{proof}
Apply Theorem \ref{thm:general} with $\psi_i=d_{r_i}\chi$.
Note that $d_{r_1},\dots,d_{r_k}$ satisfy the assumptions of Lemma \ref{lem:twisted},
and their averages are given by Lemma \ref{lem:dr_average}.
\end{proof}

\section*{Acknowledgements}

The authors are very grateful to Ofir Gorodetsky for sharing his signed factorization type viewpoint on sums of two squares with them.
They would also like to thank
Efrat Bank for helpful discussions on the topic of this work, 
Hung Bui for suggesting to study correlations relating to moments of $L$-functions,
Jon Keating and Zeev Rudnick for their advice on a preliminary version of the paper, 
and 
Tristan Freiberg, P\"ar Kurlberg and Lior Rosenzweig
for making their preliminary manuscript available to them.

The first author is partially supported by the Israel Science Foundation (grant No. 925/14),
the second author by a research grant from the Ministerium f\"ur Wissenschaft, Forschung und Kunst Baden-W\"urttemberg.


\begin{thebibliography}{BSW16}


%\bibitem[AS11]{AS}
%R.B.J.T. Allenby and Alan Slomson.
%\newblock {\em How to count. An introduction to combinatorics.}
%\newblock Second Edition, Taylor and Francis, 2011.

\bibitem[ABR15]{ABR}
J.\ C.~Andrade, Lior Bary-Soroker, and Zeev Rudnick.
\newblock Shifted convolution and the Titchmarsh divisor problem over $\mathbb{F}_q[t]$.
\newblock {\em Philosophical Transactions of the Royal Society of London A: Mathematical, Physical and Engineering Sciences},
Theo Murphy meeting issue `Number fields and function fields: coalescences, contrasts and emerging applications' compiled and edited by J. P. Keating, Z. Rudnick and T. D. Wooley, 373(2040), 2015.

%\bibitem[BW00]{BalogWooley}
%Antal Balog and Trevor D.~Wooley.
%\newblock Sums of two squares in short intervals.
%\newblock {\em Canad. J. Math.} 52(4):673--694, 2000. 

\bibitem[BB15]{BB}
E.~Bank and L.~Bary-Soroker.
\newblock Prime polynomial values of linear functions in short intervals.
\newblock {\em J. Number Theory} 151:263--275, 2015.

\bibitem[BBF17]{BBF}
E.~Bank, L.~Bary-Soroker and A.~Fehm.
\newblock Sums of two squares in short intervals in polynomial rings over finite fields.
\newblock To appear in {\em American J.\ Math.}, 2017. %arXiv:1509.02013, 2015.

\bibitem[Ban86]{Bantle}
G.~Bantle.
\newblock An asymptotic formula for $B$-twins.
\newblock {\em Acta Arithmetica} 47:297--312, 1986.

%\bibitem[BBR15]{PrimePoly2014}
%Efrat Bank, Lior Bary-Soroker, and Lior Rosenzweig.
%\newblock Prime polynomials in short intervals and in arithmetic progressions.
%\newblock {\em Duke Math J.} 164(2):277-295, 2015.

%%\bibitem[Bar12]{B2012}
%Lior Bary-Soroker. 
%\newblock Irreducible values of polynomials.
%\newblock {\em Adv. Math.} 229(2):854--874, 2012. 

\bibitem[BSW16]{BSW}
L.~Bary-Soroker, Y.~Smilansky, and A.~Wolf.
\newblock On the function field analogue of Landau's theorem on sums of squares.
\newblock {\em Finite Fields Appl.} 39:195--215, 2016

\bibitem[Bre62]{Bredihin}
B.\ M.\ Bredihin.
\newblock Binary additive problems with prime numbers. 
\newblock {\em Dokl. Akad. Nauk SSSR} 142:766--768, 1962.

\bibitem[Car15]{Carmon}
D.\ Carmon.
\newblock The autocorrelation of the M\"obius function and Chowla's conjecture for the rational function field in characteristic $2$.
\newblock {\em Philosophical Transactions of the Royal Society of London A: Mathematical, Physical and Engineering Sciences},
Theo Murphy meeting issue `Number fields and function fields: coalescences, contrasts and emerging applications' compiled and edited by J. P. Keating, Z. Rudnick and T. D. Wooley, 373(2040), 2015.

\bibitem[CR14]{CarmonRudnick}
D.\ Carmon and Z.\ Rudnick.
\newblock The autocorrelation of the M\"obius function and Chowla's conjecture for the rational function field.
\newblock  {\em Q. J. Math.} 65(1):53--61, 2014.

\bibitem[CD87]{CochraneDressler}
T.~Cochrane and R.~E.~Dressler.
\newblock Consecutive triples of sums of two squares.
\newblock {\em Archiv Math.} 49:301--304, 1987.

\bibitem[CK97]{ConnorsKeating}
R.~D.~Connors and J.~P.~Keating.
\newblock Two-point spectral correlations for the square billiard
\newblock {\em J. Phys. A: Math. Gen.} 30:1817--1830, 1997.

\bibitem[CG01]{ConreyGonek}
J.~B.~Conrey and S.~M.~Gonek.
\newblock High moments of the Riemann zeta-function. 
\newblock {\em Duke Math. J.} 107(3):577--604, 2001.

%\bibitem[CK15a]{ConreyKeatingI}
%B.~Conrey and J.~P.~Keating.
%\newblock Moments of zeta and correlations of divisor-sums: I.
%\newblock {\em Philosophical Transactions of the Royal Society of London A: Mathematical, Physical and Engineering Sciences},
%Theo Murphy meeting issue `Number fields and function fields: coalescences, contrasts and emerging applications' compiled and edited by J. P. Keating, Z. Rudnick and T. D. Wooley, 373(2040), 2015.

%\bibitem[CK15b]{ConreyKeatingII}
%B.~Conrey and J.~P.~Keating.
%\newblock Moments of Zeta and Correlations of Divisor-Sums: II.

%\bibitem[CK15c]{ConreyKeatingIII}
%B.~Conrey and J.~P.~Keating.
%\newblock Moments of zeta and correlations of divisor-sums: III.
%\newblock {\em Indagationes Mathematicae} 26(5):736--747, 2015.

\bibitem[CK16]{ConreyKeatingIV}
B.~Conrey and J.~P.~Keating.
\newblock Moments of zeta and correlations of divisor-sums: IV.
\newblock {\em Res. Number Theory} 2, 2016.

\bibitem[Est32]{Estermann}
T.~Estermann.
\newblock An asymptotic formula in the theory of numbers.
\newblock {\em Proc. London Math. Soc.} 34:280--292, 1932.

%\bibitem[Ent14]{Entin}
%Alexei Entin.
%\newblock On the Bateman-Horn conjecture for polynomials over large finite fields.
%\newblock arXiv:1409.0846, 2014.

\bibitem[FJ08]{FJ}
M.~D.~Fried and M.~Jarden.
\newblock {\em Field Arithmetic}.
\newblock Third Edition. Springer, 2008.

%\bibitem[Fri82]{FriedlanderSI_II}
%J. B. Friedlander.
%\newblock Sifting short intervals.
%\newblock {\em Math. Proc. Cambridge Philos. Soc.} 91:9--15, 1982.


\bibitem[FKR17]{FKR}
T.~Freiberg, P.~Kurlberg and L.~Rosenzweig.
\newblock Poisson distribution for gaps between sums of two squares and level spacings for toral point scatterers.
\newblock arXiv:1701.01157 [math-ph], 2017.

%\bibitem[FI]{FI}
%Friedlander-Iwaniec.

\bibitem[Gor16]{Gorodetsky}
O.~Gorodetsky.
\newblock A Polynomial Analogue of Landau's Theorem and Related Problems.
\newblock  	arXiv:1603.02890 [math.NT], 2016.

\bibitem[Hal06]{Hall}
C.~Hall.
\newblock $L$-functions of twisted Legendre curves.
\newblock  {\em J. Number Theory} 119(1):128--147, 2006.

\bibitem[HL24]{HL}
G.~H.~Hardy and J.~E.~Littlewood.
\newblock Some Problems of 'Partitio Numerorum'(V): A Further Contribution to the Study of Goldbach's Problem.
\newblock {\em Proc. London Math. Soc.} S2-22 no. 1, 46--56, 1924.

%\bibitem[HB88]{HeathBrown}
%D.~R.~Heath-Brown.
%\newblock The number of primes in a short interval.
%\newblock {\em J.~reine angew.~Math.} 389:22--63, 1988.

\bibitem[Hoo57]{Hooley0}
C.~Hooley.
\newblock On the representation of a number as the sum of two squares and a prime.
\newblock {\em Acta Math.} 97:189--210, 1957.

\bibitem[Hoo71]{Hooley1}
C.~Hooley.
\newblock On the intervals between numbers that are sums of two squares.
\newblock {\em Acta Math.} 127(1):279--297, 1971.

\bibitem[Hoo73]{Hooley2}
C.~Hooley.
\newblock On the intervals between numbers that are sums of two squares. II.
\newblock {\em J.\ Number Theory} 5:215--217, 1973.

\bibitem[Hoo74]{Hooley3}
C.~Hooley.
\newblock On the intervals between numbers that are sums of two squares. III.
\newblock {\em J. Reine Angew. Math.} 267(1):207--218, 1974.

\bibitem[Hoo94]{Hooley4}
C.~Hooley.
\newblock On the intervals between numbers that are sums of two squares. IV.
\newblock {\em J. Reine Angew. Math.} 452:79--109, 1994.

\bibitem[IS72]{IndlekoferSchwarz}
K.-H.~Indlekofer and W.~Schwarz.
\newblock \"Uber B-Zwillinge. 
\newblock {\em Archiv Math.} 23:251--256, 1972

\bibitem[Ind74]{Indlekofer}
K.-H.~Indlekofer.
\newblock Scharfe untere Absch\"atzung f\"ur die Anzahlfunktion der B-Zwillinge. 
\newblock {\em Acta Arithmetica} 26:207--212, 1974.

\bibitem[Ivi97]{Ivic}
A.~Ivic.
\newblock The general additive divisor problem and moments of the zeta-function.
\newblock {\em New trends in probability and statistics} 4:69--89, 1997.


\bibitem[Iwa72]{Iwaniec1972}
H.~Iwaniec.
\newblock Primes of the type $\varphi(x,y)+A$ where $\varphi$ is a quadratic form.
\newblock {\em Acta Arithmetica} 21:203--234, 1972.

\bibitem[Iwa76]{Iwaniec}
H.~Iwaniec.
\newblock The half dimensional sieve.
\newblock {\em Acta Arithmetica} 29:69--95, 1976.

%\bibitem[KM72]{KarlinMcGregor}
%S.~Karlin and J.~McGregor.
%\newblock Addendum to a paper of {W}. {E}wens.
%\newblock {\em Theoret. Population Biology.} 3:113--114, 1972.

\bibitem[KGH07]{KeatingGonekHughes}
J.~P.~Keating, S.~M.~Gonek, and C.~P.~Hughes.
\newblock A hybrid Euler-Hadamard product for the Riemann zeta function.
\newblock {\em Duke Math. J} 136:507--549, 2007.

\bibitem[KR16]{KRG}
J.~P.~Keating and E.~Roditty-Gershon.
\newblock Arithmetic correlations over large finite fields.
\newblock {\em Int. Math. Res. Not.} IMRN 2016, no. 3, 860--874, 2016.

\bibitem[KR14]{KeatingRudnick}
J.~P.~Keating and Z.~Rudnick.
\newblock The variance of the number of prime polynomials in short intervals and in residue classes.
\newblock {\em Int. Math. Res. Not.} IMRN 2014(1):259--288, 2014. 

\bibitem[Kel78]{Kelly}
P.~J.~Kelly.
\newblock The number of B-twins in an interval.
\newblock Dissertation, Nottingham, 1978.

%\bibitem[Kle75]{Klemm}
%Michael Klemm.
%\newblock \"Uber die Reduktion von Permutationsmoduln.
%\newblock {\em Math.\ Z.} 143:113--117, 1975.

\bibitem[Lan08]{Landau}
E.~Landau.
\newblock \"{U}ber die {E}inteilung der positiven ganzen {Z}ahlen in vier {K}lassen nach der {M}indestzahl der zu ihrer additiven {Z}usammensetzung erforderlichen {Q}uadrate.
\newblock {\em Arch. Math. Phys.} 13:305--312, 1908.

%\bibitem[Lan02]{Lang}
%Serge Lang.
%\newblock {\em Algebra}.
%\newblock Springer, 2002.

%\bibitem[Mai85]{Maier}
%Helmut Maier.
%\newblock Primes in short intervals. 
%\newblock {\em Michigan Math. J.} 32(2):221--225, 1985.

\bibitem[Mil14]{Milne}
J.~Milne.
\newblock {\em Fields and Galois theory}.
\newblock Lecture notes, version 4.50, 2014.

\bibitem[Mot70]{Motohashi}
Y.~Motohashi.
\newblock On the distribution of prime numbers which are of the form $x^2+y^2+1$.
\newblock {\em Acta Arithmetica} XVI:351--363, 1970.

\bibitem[Mot71]{Motohashi2}
Y.~Motohashi.
\newblock On the distribution of prime numbers which are of the form ``$x^2+y^2+1$''. II.
\newblock {\em Acta Math. Acad. Sci. Hungar.} 22:207--210, 1971.

\bibitem[Pol08]{Pollack}
P.~Pollack.
\newblock Simultaneous prime specializations of polynomials over finite fields.
\newblock {\em Proc. Amer. Math. Soc.} 136(11):3775--3784, 2008.

\bibitem[Poo03]{Poonen}
B.~Poonen.
\newblock Squarefree values of multivariable polynomials.
\newblock  {\em Duke Math. J.} 118(2):353--373, 2003.

\bibitem[Rie65]{Rieger}
G.~J.~Rieger.
\newblock Aufeinanderfolgende Zahlen als Summen von zwei Quadraten.
\newblock {\em Indag. Math.} 27:208--220, 1965.

\bibitem[Ros02]{Rosen}
M.~Rosen.
\newblock {\em Number Theory in Function Fields}.
\newblock Springer, 2002.

\bibitem[Rud14]{Rudnick}
Z.~Rudnick.
\newblock Some problems in analytic number theory for polynomials over a finite field. 
\newblock {\em Proceedings of the ICM vol 1}, 2014.

\bibitem[Sage]{Sage}
SageMath, the Sage Mathematics Software System (Version SageMath-7.2.beta0),
   The Sage Developers, 2016, \url{http://www.sagemath.org}.

\bibitem[Sch76]{Schmidt}
W.~M.~Schmidt.
\newblock {\em Equations over Finite Fields. An Elementary Approach.}
\newblock Springer 1976.

\bibitem[Sch72]{Schwarz}
W.~Schwarz.
\newblock \"Uber $B$-Zwillinge II. 
\newblock {\em Archiv Math.} 23:408--409, 1972.

\bibitem[Smi13]{Smilansky}
Y.~Smilansky.
\newblock Sums of two squares - pair correlation \& distribution in short intervals.
\newblock {\em Int. J. Number Theory} 09, 2013.

\bibitem[Tao16]{BlogTao}
T.~Tao.
\newblock Heuristic computation of correlations of higher order divisor functions.
\newblock {\em WordPress.com},  What's new, Online Blog. 
\url{http://goo.gl/GBncWw}

%\bibitem[Uch70]{Uchida1970}
%K\^{o}ji Uchida.
%\newblock {Galois group of an equation $X^{n}-aX+b=0$}.
%\newblock {\em Tohoku Math. J. (2)}, 22(4):670--678, 1970.


\end{thebibliography}
\end{document}